\theoremstyle{plain}
\newtheorem{theorem}{Theorem}
\newtheorem{corollary}{Corollary}[theorem]
\newtheorem{lemma}{Lemma}
\newtheorem{proposition}{Proposition}
\newtheorem{problem}{Problem}
\theoremstyle{remark}
\newtheorem{remark}{Remark}
\numberwithin{equation}{section}
\def\al{\alpha}
\def\be{\beta}
\def\vGa{\varGamma}
\def\ga{\gamma}
\def\de{\delta}
\def\vep{\varepsilon}
\def\ze{\zeta}
\def\et{\eta}
\def\ka{\kappa}
\def\la{\lambda}
\def\rh{\rho}
\def\si{\sigma}
\def\ta{\tau}
\def\ph{\phi}
\def\ch{\chi}
\def\ps{\psi}
\def\fr{\frac}
\def\del{\partial}
\def\re{\operatorname{Re}}
\def\im{\operatorname{Im}}
\def\sgn{\operatorname{sgn}}
\def\bbZ{\mathbb{Z}}
\def\Z2{\mathbb{Z}^2}
\def\cC{\mathcal{C}}
\def\LM{\mathcal{L}\mathcal{M}}
\def\RL{\mathcal{R}\mathcal{L}}
\def\cL{\mathcal{L}}
\def\cM{\mathcal{M}}
\def\dps{\displaystyle}
\def\sump{\mathop{\sideset{}{'}\sum}}
\def\fs{\mathfrak{s}}
\def\bg{\begin}
\def\ed{\end}
\def\lf{\lfloor}
\def\rf{\rfloor}
\def\vG#1#2{\varGamma\Bigl(\begin{matrix}#1\\ #2\end{matrix}\Bigr)}
\def\tvG#1#2{\textstyle\varGamma(\begin{smallmatrix}#1\\ #2\end{smallmatrix})}
\def\1F1#1#2#3{{}_1F_1\Bigl(\begin{matrix}#1\\ #2\end{matrix};#3\Bigr)}
\def\t1F1#1#2#3{{}_1F_1(\begin{smallmatrix}#1\\ #2\end{smallmatrix};#3)}
\def\F#1#2#3{{}_2F_1\Bigl(\begin{matrix}#1\\ #2\end{matrix};#3\Bigr)}
\def\tF#1#2#3{{}_2F_1(\begin{smallmatrix}#1\\ #2\end{smallmatrix};#3)}
\def\2F3#1#2#3{{}_2F_3\Bigl(\begin{matrix}#1\\ #2\end{matrix};#3\Bigr)}
\def\I#1#2{\mathcal{I}^{\kern0.12em #1}_{\infty,#2}}
\renewcommand{\subjclass}[1]{\footnote[0]{2020 \textit{Mathematics Subject Classification.} #1.}}
\begin{document}
\title[Asymptotic expansions for Lerch zeta-functions]
{Asymptotic expansions for the Laplace-Mellin and Riemann-Liouville 
transforms of Lerch zeta-functions} 
\author[Masanori Katsurada]{Masanori Katsurada}
\address{\footnotesize Department of Mathematics, Faculty of Economics, 
Keio University, 4--1--1 Hiyoshi, Kouhoku-ku, Yokohama 223--8521, Japan}
\curraddr{\footnotesize Mathematics and Informatics Program, Faculty of Science, Kagoshima University, K{\^o}rimoto 1--21--35, Kagoshima 890--0065, Japan
}
\email{\tt katsurad@z3.keio.jp}
\subjclass{Primary 11M35; Secondary 11M06}
\keywords{Lerch zeta-function, Hurwitz zeta-function, Laplace-Mellin transform, Riemann-Liouville transform, 
Mellin-Barnes integral, asymptotic expansion, weighted mean value}

\thanks{The present research was supported in part by Grant-in-Aid for Scientific Research 
(No.~26400021) from JSPS}
\begin{abstract} 
Let $\ph(s,a,\la)$ denote the Lerch zeta-function, $\ph^{\ast}(s,a,\la)$ 
a slight modification of $\ph(s,a,\la)$ defined by extracting the (possible) 
singularity of $\ph(s,a,\la)$ at $s=1$, and $(\ph^{\ast})^{(m)}(s,a,\la)$ 
for any $m\in\bbZ$ the $m$th derivative with respect to $s$ if $m\geq0$, 
while if $m\leq0$ the $|m|$-th primitive defined with its initial point at 
$s+\infty$. The present paper aims to study asymptotic aspects of 
$(\ph^{\ast})^{(m)}(s,a,\la)$, transformed through the Laplace-Mellin 
and Riemann-Liouville operators (say, $\LM_{z;\ta}^{\al}$ and 
$\RL_{z;\ta}^{\al,\be}$, respectively) in terms of the variable $s$. 
We shall show that complete asymptotic expansions exist if $a>1$ for 
$\LM_{z;\ta}^{\al}(\ph^{\ast})^{(m)}(s+\ta,a,\la)$ and 
$\RL_{z;\ta}^{\al,\be}(\ph^{\ast})^{(m)}(s+\ta,a,\la)$ (Theorems~1--4), 
as well as for their iterated variants (Theorems~5--10), when 
the `pivotal' parameter $z$ (of the transforms) tends to both $0$ and 
$\infty$ through appropriate sectors. Most of our results include  
any vertical ray in their region of validity; this allows us to deduce 
complete asymptotic expansions along vertical lines $(s,z)=(\si,it)$ as 
$t\to\pm\infty$ (Corollaries~2.1,~4.1,~6.1 and~8.1). 
\end{abstract} 
\maketitle
\section{Introduction}
Throughout the paper, $s=\si+it$ is a complex variable, and $z=x+iy$ a complex parameter 
(with real coordinates $\si$, $t$, $x$ and $y$), $a$ and $\la$ are real parameters 
with $a>0$, and the notation $e(s)=e^{2\pi is}$ is frequently used. The Lerch zeta-function 
$\ph(s,a,\la)$ is defined by the Dirichlet series 
\begin{align*}
\ph(s,a,\la)
=\sum_{l=0}^{\infty}e(\la l)(a+l)^{-s}
\qquad
(\si>1),
\tag{1.1}
\end{align*}
and its meromorphic continuation over the whole $s$-plane 
(cf.~\cite{lerch1887}\cite{lipschitz1889}); this reduces if $\la\in\mathbb{Z}$ to the 
Hurwitz zeta-function $\ze(s,a)$, and further to the Riemann zeta-function $\ze(s)=\ze(s,1)$. 
Note that the domain of the parameter $a$ may be extended to the whole sector $|\arg z|<\pi$ 
through the procedure in \cite{katsurada1998}. 
 
Let $\vGa(s)$ denote the gamma function, $\al$ and $\be$ be complex numbers with positive real parts, $f(z)$ a function holomorphic in the sector $|\arg z|<\pi$, and write $X_+
=\max(0,X)$ for any $X\in\mathbb{R}$. We introduce here the Laplace-Mellin and 
Riemann-Liouville (or Erd{\'e}lyi-K{\"o}ber) transforms of $f(z)$, given by  
\begin{align*}
\LM_{z;\ta}^{\al}f(\ta)
&=\fr1{\vGa(\al)}\int_0^{\infty}f(z\ta)\ta^{\al-1}e^{-\ta}d\ta,
\tag{1.2}\\
\RL_{z;\ta}^{\al,\be}f(\ta)
&=\fr{\vGa(\al+\be)}{\vGa(\al)\vGa(\be)}
\int_0^{\infty}f(z\ta)\ta^{\al-1}(1-\ta)_+^{\be-1}d\ta
\tag{1.3}
\end{align*}
with the normalization gamma multiples, provided that the integrals converge; the factor 
$\ta^{\al-1}$ is inserted to secure the convergence as $\ta\to0^+$, while $e^{-\ta}$ 
and $(1-\ta)_+^{\be-1}$ have effects to extract the portions of $f(\ta)$ corresponding 
to $\ta=O(z)$. Let $\de_{\mathbb{Z}}(\la)$ denote the symbol which equals $1$ or $0$ 
according to $\la\in\mathbb{Z}$ or otherwise, and set 
\begin{align*}
\ps(s,z)=\fr{z^{1-s}}{s-1}.
\tag{1.4}
\end{align*}
We further introduce a slignt modification $\ph^{\ast}(s,z,\la)$ of $\ph(s,z,\la)$,  
defined by 
\begin{align*}
\ph^{\ast}(s,z,\la)=\ph(s,z,\la)-\de_{\mathbb{Z}}(\la)\ps(s,z)
=\begin{cases}
{\dps\ze(s,z)-\ps(s,z)}&\quad\text{if $\la\in\mathbb{Z}$},\\
{\dps\ph(s,z,\la)}&\quad\text{otherwise},
\end{cases}
\tag{1.5}
\end{align*} 
which removes the only (possible) singularity at $s=1$, where the (consistent)  
notation $\ze^{\ast}(s,z)=\ph^{\ast}(s,z,\la)$ if $\la\in\mathbb{Z}$ is used throughout. 
The exclusion here in (1.5) has an advantage over enlarging satisfactorily the region of 
validity (so as to include vertical directions) of our asymptotic expansions. 
Next let $f^{(m)}(s)$, for any entire function $f(s)$, denote its $m$th derivative if 
$m=0,1,2,\ldots$, while its $|m|$th primitive if $m=-1,-2,\ldots$, defined inductively by
\begin{align*}
f^{(m)}(s)=\int_{s+\infty}^{s}f^{(m+1)}(w)dw=-\int_0^{0+\infty}f^{(m+1)}(s+u)du
\qquad(m=-1,-2,\ldots),\tag{1.6}
\end{align*} 
subject to convergence, where the path of integration is the horizontal ray. 

The principal aim of the present paper is to study asymptotic aspects of the Laplace-Mellin 
and Riemann-Liouville transforms of the modified Lerch zeta-function, defined by  
\begin{gather*}
\LM_{z;\ta}^{\al}(\ph^{\ast})^{(m)}(s+\ta,a,\la)
=\fr1{\vGa(\al)}\int_0^{\infty}
(\ph^{\ast})^{(m)}(s+z\ta,a,\la)\ta^{\al-1}e^{-\ta}d\ta
\tag{1.7}\\
\RL_{z;\ta}^{\al,\be}(\ph^{\ast})^{(m)}(s+\ta,a,\la)
=\fr{\vGa(\al+\be)}{\vGa(\al)\vGa(\be)}
\int_0^{\infty}(\ph^{\ast})^{(m)}(s+z\ta,a,\la)\ta^{\al-1}(1-\ta)_+^{\be-1}d\ta
\tag{1.8}
\end{gather*}
for any $m\in\mathbb{Z}$, where the conditions $a>1$ and $|\arg z|\leq\pi/2$ are 
required in (1.7) for convergence. We shall show that complete asymptotic expansions 
exist if $a>1$ for (1.7) and (1.8) (Theorems~1--4) when the `pivotal' parameter $z$ 
of the transforms tends to both $0$ and $\infty$ through appropriate sectors. 
Moreover the iterations of (1.2) and (1.3) lead us to generate various transformations, 
some of whose primary situations are treated to establish complete asymptotic 
expansions for (1.5) transformed through such operators (Theorems~5--10). 
Most of our results include any vertical ray within their region of validity; 
this allows us to deduce complete asymptotic expansions along vertical lines 
with $(s,z)=(\si,it)$ as $t\to\pm\infty$ (Corollaries~2.1, 4.1, 6.1 and 8.1). 
It seems that the zeta-functions transformed through, e.g. (1.2), (1.3), (3.2), 
(3.9) and (3.16) are potentialy rich objects, however, have been fairly rare 
subjects of research in the literature.  

As for the methods used, crucial roles in the proofs are played by the Mellin-Barnes type 
integrals in (6.4), (6.11), (7.3), (7.13), (7.19) and (7.29) below; a key ingredient in manupilating these 
integrals is the vertical estimate (4.12) for the auxiliary zeta-function defined by (2.2) with (2.1).

We next give an overview of history of research related to asymptotic 
aspects of the integral transforms of zeta-functions. The study of Laplace transforms for 
the mean square of $\ze(s)$ seems to be initiated by Hardy-Littlewood \cite{hardy-littlewood1918}, 
who obtained the asymptotic relation, say,
\begin{align*}
\cL_{1/2}(\vep)
=\int_0^{\infty}\biggl|\ze\Bigl(\fr12+it\Bigr)\biggr|^2e^{-\vep t}dt
\sim\fr{1}{\vep}\log\fr{1}{\vep},
\qquad(\vep\to0^+),
\end{align*}
in connection with the research of asymptotic behaviour of the 
(upper-truncated) mean square of $\ze(1/2+it)$, in the form $\int_0^T|\ze(1/2+it)|^2dt$  
as $T\to+\infty$. Wilton \cite{wilton1930} then refined the result above to an asymptotic 
expansion with the error term $O\{\vep^{-1/2}\log^{3/2}(1/\vep)\}$ $(\vep\to0^+)$, 
which was in fact replaced by a complete asymptotic expansion by 
K{\"o}ber~\cite{kober1936}. Atkinson \cite{atkinson1939} finally succeeded (through a rather 
elementary argument) in establishing for any integer $N\geq0$ that 
\begin{align*}
\cL_{1/2}(\vep)
&=\fr{1}{\vep}\log\fr{1}{\vep}-\fr{\log2\pi-\ga_0}{\vep}
+\sum_{n=0}^{N-1}\Bigl(a_n+b_n\log\fr{1}{\vep}\Bigr)\vep^n
+O\Bigl(\vep^N\log\fr{1}{\vep}\Bigr)
\end{align*}
as $\vep\to0^+$ with some constants $a_n$, $b_n$ $(n=0,1,\ldots; b_0=0)$ and the $0$th 
Euler-Stieltjes constant $\ga_0=-\vGa'(1)$ (cf.~\cite[p.34,1.12(17)]{erdelyi1953a}). 

It can be said that recent major developments into this direction have 
been made, to a greater or less extent, in the spirit of Atkinson's influential 
work~\cite{atkinson1949} on the error term $E(T)$ of the upper-truncated mean square of 
$\ze(1/2+it)$, where the innovative treatment of the product $\ze(u)\ze(v)$ with 
independent complex variables $(u,v)$ was made toward the evantual application upon 
taking $u=1/2+it$ and $v=1/2-it$. A more general weighted mean square 
$\cL_{\rh}(s)=\int_0^{\infty}|\ze(\rh+ix)|^2e^{-sx}dx$ was treated in the late 
1990's by Jutila~\cite{jutila1998}, who made a detailed study of $\cL_{\rh}(s)$, especially 
on the critical line $\rh=1/2$, while obtaining its asymptotic formula as $s\to0$ through 
the sector $|\arg s|<\pi/2$, and also applied it to rederive the classical (so-called) Atkinson's 
formula for $E(T)$. Further study of $\cL_{\rh}(s)$ has been carried out by 
Ka{\v c}inskait{\. e}-Laurin{\v c}ikas~\cite{kacinskaite-laurincikas2009}. On the other 
hand, the (lower-truncated) Mellin transform $\cM_{k,\rh}(s)=\int_1^{\infty}|\ze(\rh+ix)|^{2k}x^{-s}dx$ 
$(k=1,2,\ldots)$ was explored by Ivi{\'c}-Jutila-Motohashi~\cite{ivic-jutila-motohashi2002}, who 
applied their results to investigate the higher power moments, in particular the eighth power 
moment, of $\ze(s)$. Research subsequent to \cite{ivic-jutila-motohashi2002} was due to 
Ivi{\'c}~\cite{ivic2005}, while Laurin{\v c}ikas \cite{laurincikas2011} made a detailed 
study of the case $k=1$, i.e.~the mean square case, of $\cM_{k,\rh}(s)$. 

As for the relevant asymptotic aspects of allied zeta-functions, the Laplace transform 
$\int_0^{\infty}|L(1/2+it,\ch)|^2e^{-\vep t}dt$, where $L(s,\ch)$ denotes the $L$-function 
attached to a primitive Dirichlet character $\ch$ modulo $q\geq2$, was treated in the 
same paper above by K{\"o}ber~\cite{kober1936}. Next let $a$, $b$, $\mu$ and $\nu$ 
be arbitrary real parameters, and $\ps_{\bbZ^2}(s;a,b;\mu,\nu;z)$ denote the 
generalized Epstein zeta-function, defined for $\im z>0$ by 
\begin{align*}
\ps_{\bbZ^2}(s;a,b;\mu,\nu;z)
&=\sump_{m,n=-\infty}^{\infty}\fr{e\{(a+m)\mu+(b+n)\nu\}}{|a+m+(b+n)z|^{2s}}
\qquad(\si>1),
\tag{1.9}
\end{align*}
and its meromorphic continuation over the whole $s$-plane, where the impossible term 
$1/0^{2s}$ is to be excluded; the particular case $(a,b)\in\bbZ^2$ and $(\mu,\nu)=(0,0)$ 
reduces to the classical Epstein zeta-function $\ze_{\bbZ^2}(s;z)$. The author~\cite{katsurada2007} 
has shown that complete asymptotic expansions exist for $\ze_{\mathbb{Z}^2}(s;z)$ when 
$y=\im z\to+\infty$, and also for the Laplace-Mellin transform $\LM_{Y;y}^{\al}\ze_{\bbZ^2}(s;x+iy)$ 
when $Y\to+\infty$. The method developed in \cite{katsurada2007} could be extended in 
\cite{katsurada2015} to show that similar expansions exist further for (1.9) when 
$y\to+\infty$, as well as for the Riemann-Liouville transform 
$\RL_{Y;y}^{\al,\be}\ze_{\bbZ^2}(s;x+iy)$ when $Y\to+\infty$.   

The paper is organized as follows. Various complete asymptotic expansions for the 
transforms (1.7) and (1.8) are presented in the next section, and those for their 
iterated variants in Section~3, together with their applications. Several results 
necessary for our proofs are prepared in Section~4, while Sections~5--6 and 7
are devoted to establishing Theorems~1--4 and Theorems~5--10 respectively. 
\section{Statement of results (1)}

We first introduce the Hadamard type operators with the initial point at $\infty$, 
defined for any $(r,s)\in\mathbb{C}^2$ by 
\begin{align*}
\I{r}{s}f(s)=\fr1{\vGa(r)\{e(r)-1\}}\int_{\infty}^{(0+)}f(s+z)z^{r-1}dz\tag{2.1}
\end{align*}
if $f(s+x)$ belongs (as a function of $x$) to the class $x^{1-\re r}L^1_x[0,+\infty[$. 
Here the path of integration is a contour which starts from $\infty$, proceeds along 
the real axis to a small $\et>0$, encircles the origin counter-clockwise, and returns 
to $\infty$ along the real axis; $\arg z$ varies from $0$ to $2\pi$ along the contour. 
Then the auxiliary zeta-function $\ph^{\ast}_r(s,a,\la)$ is defined, for any 
$(r,s)\in\mathbb{C}^2$ and for any $(a,\la)\in\mathbb{R}^2$ with $a>1$, by 
\begin{align*}
\ph^{\ast}_{r}(s,a,\la)=\I{r}{s}\ph^{\ast}(s,a,\la),\tag{2.2}
\end{align*}
which is crucial in describing our results, and also of some interest in itself, since the relation 
\begin{align*}
\ga_m(a,\la)
&=\fr{(-1)^m}{m!}\{\ph_{-m}^{\ast}(1,a,\la)+\log^ma\}
\qquad(m=0,1,\ldots)
\end{align*}
holds if $a>1$, where $\ga_m(a,\la)$ are the extensions of the generalized Euler-Stieltjes constants (cf.~\cite[p.41, 1.8,(1.123)]{ivic1985}) to Lerch zeta-functions. Further, let $(s)_n=\vGa(s+n)/\vGa(s)$ for any $n\in\mathbb{Z}$ denote the rising factorial 
of $s$, and write  
\begin{align*}
\vG{\al_1,\ldots,\al_m}{\be_1,\ldots,\be_n}
=\fr{\prod_{h=1}^m\vGa(\al_h)}{\prod_{k=1}^n\vGa(\be_k)}
\end{align*}
for complex numbers $\al_h$ and $\be_k$ $(h=1,\ldots,m$; $k=1,\ldots,n)$.

We now state our results on the Laplace-Mellin transform (1.7). The following 
Theorems~1~and~2 assert the asymptotic expansions as $z\to0$ and as 
$z\to\infty$ respectively. 
\begin{theorem}
Let $\al$ be any complex with positive real part, $s$ any complex variable, $a$ and $\la$ any real parameters with $a>1$, and $m$ any integer. Then for any integer $N\geq0$ we have  
\begin{align*}
\LM_{z;\ta}^{\al}(\ph^{\ast})^{(m)}(s+\ta,a,\la)
&=(-1)^m\sum_{n=0}^{N-1}\fr{(-1)^{n}(\al)_n}{n!}\ph^{\ast}_{-n-m}(s,a,\la)z^{n}
\tag{2.3}\\
&\quad+R_{m,N}^{1,+}(s,a,\la;z)
\end{align*}
in the sector $|\arg z|<\pi$. Here the reminder $R_{m,N}^{1,+}(s,a,\la;z)$ is expressed by the Mellin-Barnes type integral in (5.7) below, and satisfies the estimate
\begin{align*}
R_{m,N}^{1,+}(s,a,\la;z)=O\{(|t|+1)^{\max(0,\lf 2-\si\rf)}|z|^{N}\}
\tag{2.4}
\end{align*}
as $z\to0$ through $|\arg z|\leq\pi-\et$ with any small $\et>0$, where the implied 
$O$-constant depends at most on $\si$, $a$, $\la$, $\al$, $m$, $N$ and $\et$.
\end{theorem}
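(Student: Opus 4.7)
My plan is to represent the transform on the left of (2.3) as a Mellin-Barnes integral, shift its contour rightward past the simple poles at $r=0,1,\ldots,N-1$ to extract the explicit series, and then bound the resulting remainder integral by means of the vertical estimate (4.12) for the auxiliary zeta-function $\ph^{\ast}_{r}(s,a,\la)$.

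First I would invert the Hankel contour in (2.1)--(2.2) to produce a Mellin-Barnes identity of the form
\begin{align*}
(\ph^{\ast})^{(m)}(s+w,a,\la)
=\fr{(-1)^m}{2\pi i}\int_{(c)}\vGa(-r)\,\ph^{\ast}_{-r-m}(s,a,\la)\,w^{r}\,dr
\end{align*}
for some $-1<c<0$ and $w$ in a suitable sector, the sign $(-1)^m$ recording the integration convention (1.6) when $m<0$. Setting $w=z\ta$ and substituting into (1.7), interchanging the orders of integration (justified by the factor $e^{-\ta}$ in the outer variable together with Stirling's decay of $\vGa(-r)$ in the inner), and evaluating $\int_0^{\infty}\ta^{\al+r-1}e^{-\ta}d\ta=\vGa(\al+r)$, I would arrive at
\begin{align*}
\LM_{z;\ta}^{\al}(\ph^{\ast})^{(m)}(s+\ta,a,\la)
=\fr{(-1)^m}{2\pi i}\int_{(c)}\vG{-r,\al+r}{\al}\ph^{\ast}_{-r-m}(s,a,\la)\,z^{r}\,dr,
\end{align*}
which is the Mellin-Barnes representation featured in (5.7).

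Next I would deform the contour rightward past the simple poles of $\vGa(-r)$ at $r=n$ for $n=0,1,\ldots,N-1$. Since $\re\al>0$, the factor $\vGa(\al+r)$ is regular to the right of $(c)$, so these are the only singularities encountered, with $\Res_{r=n}\vGa(-r)=(-1)^{n+1}/n!$; the sign flip from the clockwise pickup, combined with $\vGa(\al+n)/\vGa(\al)=(\al)_n$ and the overall prefactor $(-1)^m$, reproduces the summands of (2.3) exactly, while the integral along the displaced contour defines $R_{m,N}^{1,+}(s,a,\la;z)$.

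The main obstacle will be the uniform vertical bound (2.4) for this remainder. On the displaced contour one first factors out the appropriate power of $|z|$; Stirling's formula yields the exponential decay $|\vGa(-r)\vGa(\al+r)|\ll|\im r|^{\re\al-3/2}e^{-\pi|\im r|}$ as $|\im r|\to\infty$, while (4.12) provides polynomial control of $\ph^{\ast}_{-r-m}(s,a,\la)$ in $|\im r|$ and supplies the factor $(|t|+1)^{\max(0,\lf 2-\si\rf)}$ in $s=\si+it$. The hypothesis $|\arg z|\le\pi-\et$ ensures that the residual factor $|z^{r}|=|z|^{\re r}e^{-(\arg z)\im r}$ does not overpower Stirling's exponential decay---a positive slack of order $\et|\im r|$ survives---so the $\im r$-integration converges absolutely with the implied constant depending only on the listed parameters. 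Verifying this uniformly in the admissible ranges of $(s,z)$, together with justifying the contour deformation via (4.12), are the delicate but routine technical steps.
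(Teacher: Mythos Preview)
Your proposal is correct and follows essentially the same route as the paper: derive the Mellin--Barnes representation (5.4), shift the contour rightward past the poles of $\vGa(-r)$ at $r=0,\ldots,N-1$, and bound the remainder using Stirling together with the vertical estimate (4.12). The only procedural difference is that the paper reaches (5.4) by first computing the transform term-by-term on the Dirichlet series (obtaining factors $(1+z\log(a+l))^{-\al}$) and then applying the standard Mellin--Barnes formula for $(1+Z)^{-\al}$, rather than inverting (2.1)--(2.2) as you do; and for the sharp bound $O(|z|^N)$ the paper makes one further shift to $(u_{N+1}^+)$ (yielding (5.8)) and invokes a small integral lemma (Lemma~5) to extract the clean factor $(|t|+1)^{\max(0,\lf 2-\si\rf)}$ from the mixed bound $(|v|+|t|+1)^{\max(0,\lf 2-\si\rf)}$ coming out of (4.12)---a detail your sketch absorbs into ``routine technical steps.''
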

\begin{theorem}
Upon the same settings as in Theorem~1 we have  
\begin{align*}
\LM_{z;\ta}^{\al}(\ph^{\ast})^{(m)}(s+\ta,a,\la)
&=(-1)^m\sum_{n=0}^{N-1}\fr{(-1)^{n}(\al)_n}{n!}\ph^{\ast}_{\al+n-m}(s,a,\la)z^{-\al-n}
\tag{2.5}\\
&\quad+R^{1,-}_{m,N}(s,a,\la;z)
\end{align*}
in the sector $|\arg z|<\pi$. Here the reminder $R^{1,-}_{m,N}(s,a,\la;z)$ is expressed by the Mellin-Barnes type integral (5.10) below, and satisfies the estimate
\begin{align*}
R^{1,-}_{m,N}(s,a,\la;z)
=O\{(|t|+1)^{\max(0,\lf 2-\si\rf)}|z|^{-\re\al-N}\}
\tag{2.6}
\end{align*}
as $z\to\infty$ through $|\arg z|\leq\pi-\et$ with any small $\et>0$, where the implied $O$-constant depennds at most on $\si$, $a$, $\la$, $\al$, $m$, $N$ and $\et$. 
\end{theorem}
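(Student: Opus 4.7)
The plan is to mirror the proof of Theorem~1, but shift the contour of the underlying Mellin-Barnes representation in the opposite direction, so that the asymptotic series now captures the behaviour of the transform as $z\to\infty$ rather than as $z\to0$. Both theorems should flow from a single master representation of the form
\begin{align*}
\LM_{z;\ta}^{\al}(\ph^{\ast})^{(m)}(s+\ta,a,\la)
=\fr{(-1)^m}{2\pi i\,\vGa(\al)}\int_{(c)}\vGa(\al+w)\vGa(-w)\,
\ph^{\ast}_{-w-m}(s,a,\la)\,z^{w}\,dw,
\end{align*}
which arises by using the Hadamard-type operator $\I{w}{s}$ from (2.1)--(2.2) to encode $(\ph^{\ast})^{(m)}(s+z\ta,a,\la)$ as a contour integral, exchanging orders of integration, and evaluating the $\ta$-integral via $\int_0^{\infty}\ta^{\al+w-1}e^{-\ta}\,d\ta=\vGa(\al+w)$. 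The contour $(c)$ is a vertical line which separates the right poles $w=0,1,\ldots$ of $\vGa(-w)$ from the left poles $w=-\al-n$ ($n=0,1,\ldots$) of $\vGa(\al+w)$. Theorem~1 is obtained by pushing $(c)$ rightward across the first $N$ poles of $\vGa(-w)$; Theorem~2 is obtained by pushing $(c)$ \emph{leftward} across the first $N$ poles of $\vGa(\al+w)$.

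For the leftward shift the residue at $w=-\al-n$ is read off from
\begin{align*}
\Res_{w=-\al-n}\vGa(\al+w)=\fr{(-1)^{n}}{n!},
\qquad
\fr{\vGa(\al+n)}{\vGa(\al)}=(\al)_n,
\qquad
\ph^{\ast}_{-(-\al-n)-m}(s,a,\la)=\ph^{\ast}_{\al+n-m}(s,a,\la),
\end{align*}
which, together with $z^{-\al-n}$ evaluated from $z^{w}$ and the sign $(-1)^m$ already present, reproduces exactly the $n$-th summand of (2.5). The integral along the shifted contour $\re w=-\re\al-N+\et'$, for a small $\et'>0$, is then the Mellin-Barnes expression for $R^{1,-}_{m,N}(s,a,\la;z)$ promised in (5.10).

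The bound (2.6) is derived by estimating this remainder integral on the shifted line. The factor $|z^{w}|$ there is of size $|z|^{-\re\al-N+\et'}$; Stirling asymptotics applied to $\vGa(\al+w)\vGa(-w)$ produce decay of the shape $e^{-(\pi-|\arg z|)|\im w|}$ (times a polynomial in $|\im w|$), which is absolutely integrable so long as $|\arg z|\leq\pi-\et$; and the vertical estimate~(4.12) for $\ph^{\ast}_{r}(s,a,\la)$, specialised to $r=-w-m$, contributes the polynomial factor $(|t|+1)^{\max(0,\lf2-\si\rf)}$. Combining these three contributions and letting $\et'\downarrow0$ yields (2.6), with the implied constant depending only on the parameters listed in the statement.

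The main technical obstacle I anticipate is the justification of the contour shift itself: one must verify that the integrand decays rapidly enough along the connecting horizontal segments for their contributions to vanish as the segments are pushed to $\pm i\infty$, and that the vertical bound~(4.12) applies uniformly in $w$ on the shifted line (where $\re(-w-m)$ is large positive rather than bounded). Once these uniform estimates are in hand, the residue calculus is routine bookkeeping, and the remainder of the argument follows the pattern of Theorem~1 verbatim, with only the direction of the shift and the identity of the captured poles reversed.
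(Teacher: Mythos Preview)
Your overall strategy is correct and matches the paper's: start from the Mellin--Barnes representation (the paper's Lemma~4, formula~(5.4)), shift the contour leftward across the poles $w=-\al-n$ of $\vGa(\al+w)$ for $n=0,\ldots,N-1$, and read off the residues to obtain (2.5) with the remainder defined as the integral along the shifted line. The residue bookkeeping you sketch is right, and the decay estimate (5.6) coming from Stirling and the vertical bound (4.12) is exactly what the paper uses to justify the shift and to bound the remainder.

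There is, however, a genuine gap in how you obtain the sharp exponent in (2.6). Placing the shifted contour at $\re w=-\re\al-N+\et'$ gives you only
\[
R^{1,-}_{m,N}(s,a,\la;z)=O\bigl((|t|+1)^{\max(0,\lf2-\si\rf)}|z|^{-\re\al-N+\et'}\bigr),
\]
with an implied constant that \emph{depends on $\et'$} and blows up as $\et'\downarrow0$, because the contour approaches the pole of $\vGa(\al+w)$ at $w=-\al-N$. You therefore cannot simply ``let $\et'\downarrow0$'' to reach $|z|^{-\re\al-N}$. The paper circumvents this by shifting the contour one further step to the left, to $(u_{N+1}^-)$ with $u_{N+1}^-<-\re\al-N$: this splits $R^{1,-}_{m,N}$ into the explicit residue term $\frac{(-1)^N(\al)_N}{N!}\ph^{\ast}_{\al+N-m}(s,a,\la)z^{-\al-N}$, which (4.12) bounds by the desired $(|t|+1)^{\max(0,\lf2-\si\rf)}|z|^{-\re\al-N}$, plus $R^{1,-}_{m,N+1}$, which the integral estimate bounds by $|z|^{u_{N+1}^-}$ and is strictly smaller as $z\to\infty$. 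Incorporating this extra step fixes your argument and yields the sharp (2.6).
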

We write $\sgn X=X/|X|$ for any real $X\neq0$. The case $(s,z)=(\si,it)$ with $\si,t\in\mathbb{R}$ of Theorem~2 asserts the following asymptotic expansion along vertical lines.  
\begin{corollary}
Upon the same settings as in Theorem~2 with any real $\si$, we have
\begin{align*}
&\LM_{t;\ta}^{\al}(\ph^{\ast})^{(m)}(\si+it,a,\la)\tag{2.7}\\
&\quad=(-1)^m\sum_{n=0}^{N-1}\fr{(-1)^n(\al)_n}{n!}
\ph^{\ast}_{\al+n-m}(\si,a,\la)(e^{(\sgn t)\pi i/2}|t|)^{-\al-n}+O(|t|^{-\re\al-N})
\end{align*}
as $t\to\pm\infty$, where the implied $O$-constant depends at most on $\si$, $a$, $\la$, $\al$, $m$ and $N$.
\end{corollary}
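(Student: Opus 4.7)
The plan is to derive the corollary as a direct specialization of Theorem~2 to the case $s=\si\in\R$ (a real point on the vertical line parameter) and $z=it$ with $t\in\R\setminus\{0\}$.

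First I would check that this specialization is legitimate with respect to the sector of validity in Theorem~2. For $t\neq0$ the point $z=it$ satisfies $\arg z=(\sgn t)\pi/2$, so $|\arg z|=\pi/2\leq\pi-\et$ for any fixed $\et\in\,]0,\pi/2[$. Hence the expansion (2.5) applies verbatim, and the error bound (2.6) is available with the chosen $\et$ (absorbed into the implied $O$-constant).

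Next I would substitute. In (2.5) the power $z^{-\al-n}$ is taken with the principal branch determined by $|\arg z|<\pi$; writing $it=e^{(\sgn t)\pi i/2}|t|$, this gives
\begin{align*}
(it)^{-\al-n}=(e^{(\sgn t)\pi i/2}|t|)^{-\al-n},
\end{align*}
which is exactly the factor appearing in the right-hand side of (2.7). Thus the main terms in (2.5) transcribe directly to the main terms in (2.7). It remains to bound the remainder. Since $s=\si$ is real, the imaginary part of $s$ in (2.6) (the quantity there denoted by $t$, which is $\im s$ in Theorem~2) equals $0$, hence the factor $(|\im s|+1)^{\max(0,\lf 2-\si\rf)}$ reduces to $1$; and $|z|^{-\re\al-N}=|t|^{-\re\al-N}$. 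This yields the error estimate $O(|t|^{-\re\al-N})$ stated in (2.7).

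The only mild subtlety, not an actual obstacle, is keeping the branch of the power $(it)^{-\al-n}$ consistent with the choice made in Theorem~2: since the integrand in (1.7) and in the Mellin--Barnes representation (5.10) of $R^{1,-}_{m,N}$ is holomorphic in $|\arg z|<\pi$, the principal branch convention used in (2.5) carries over, and both signs of $t$ are handled simultaneously by the $e^{(\sgn t)\pi i/2}$ factor. The dependence of the implied $O$-constant on $\et$ is suppressed by fixing, once and for all, $\et=\pi/4$ (say), so that the constant depends only on $\si,a,\la,\al,m,N$ as asserted.
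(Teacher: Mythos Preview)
Your argument is correct and matches the paper's own approach: the corollary is stated there simply as ``the case $(s,z)=(\si,it)$ of Theorem~2,'' without a separate proof, and your verification that $|\arg(it)|=\pi/2$ lies in the admissible sector, that $(it)^{-\al-n}=(e^{(\sgn t)\pi i/2}|t|)^{-\al-n}$ under the principal branch, and that the factor $(|\im s|+1)^{\max(0,\lf 2-\si\rf)}$ in (2.6) collapses to $1$ since $s=\si$ is real, is exactly what is needed.
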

We proceed to state our results on the Riemann-Liouville transform (1.8). 
The following Theorems~3~and~4 assert the asymptotic 
expansions as $z\to0$ and as $z\to\infty$ respectively.   
\begin{theorem}
Upon the same settings as in Theorem~1 with any complex $\be$ of positive real part, we have 
\begin{align*}
\RL_{z;\ta}^{\al,\be}(\ph^{\ast})^{(m)}(s+\ta,a,\la)
&=(-1)^m\sum_{n=0}^{N-1}\fr{(-1)^{n}(\al)_n}{(\al+\be)_nn!}
\ph^{\ast}_{-n-m}(s,a,\la)z^n\tag{2.8}\\
&\quad+R_{m,N}^{2,+}(s,a,\la;z)
\end{align*}
in the sector $|\arg z|<\pi$. Here the reminder $R^{2,+}_{m,N}(s,a,\la;z)$ is expressed by the Mellin-Barnes type integral in (6.6) below, and satisfies the estimate
\begin{align*}
R^{2,+}_{m,N}(s,a,\la;z)
=O\{(|t|+1)^{\max(0,\lf 2-\si\rf)}|z|^N\}
\tag{2.9}
\end{align*} 
as $z\to0$ through $|\arg z|\leq\pi/2-\et$ with any small $\et>0$, where the implied 
$O$-constant depends at most on $\si$, $a$, $\la$, $\al$, $\be$, $m$, $N$ and $\et$. 
\end{theorem}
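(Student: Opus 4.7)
The plan is to mirror the proof of Theorem~1, replacing the Gamma-function kernel produced by the Laplace-Mellin integral by the Beta-function kernel produced by the Riemann-Liouville integral. The first step is to start from the Mellin-Barnes representation of $(\ph^{\ast})^{(m)}(s+u,a,\la)$ used in the proof of Theorem~1, which has the schematic form
\[
(\ph^{\ast})^{(m)}(s+u,a,\la)
=(-1)^m \fr{1}{2\pi i}\int_{(c)} \vGa(-w)\ph^{\ast}_{-w-m}(s,a,\la) u^{w}\,dw
\]
on a suitable vertical line $\re w=c\in(-1,0)$. I would then substitute $u=z\ta$, insert into (1.8), and exchange the order of integration by Fubini (justified via the vertical estimate~(4.12) for $\ph^{\ast}_{r}$ together with the exponential decay of $\vGa(-w)$).

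The inner $\ta$-integral is then a Beta function,
\[
\fr{\vGa(\al+\be)}{\vGa(\al)\vGa(\be)}\int_0^1\ta^{\al+w-1}(1-\ta)^{\be-1}d\ta
=\fr{\vGa(\al+\be)\vGa(\al+w)}{\vGa(\al)\vGa(\al+\be+w)},
\]
which replaces the factor $\vGa(\al+w)$ arising for Theorem~1 by the quotient above, producing the single Mellin-Barnes representation (6.6). Next I would shift the contour from $\re w=c$ to $\re w=N-\et_1$ for small $\et_1>0$, crossing the simple poles of $\vGa(-w)$ at $w=0,1,\ldots,N-1$. Each residue at $w=n$ yields
\[
(-1)^{m}\fr{(-1)^n}{n!}\cdot\fr{\vGa(\al+\be)\vGa(\al+n)}{\vGa(\al)\vGa(\al+\be+n)}\ph^{\ast}_{-n-m}(s,a,\la)z^n
=(-1)^{m}\fr{(-1)^n(\al)_n}{(\al+\be)_n n!}\ph^{\ast}_{-n-m}(s,a,\la)z^n,
\]
matching the main term of (2.8); the remaining shifted integral is $R^{2,+}_{m,N}(s,a,\la;z)$.

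The hard part will be the remainder estimate (2.9). On the line $\re w=N-\et_1$ one has $|z^{w}|\leq|z|^{N-\et_1}e^{-(\arg z)\im w}$, which is controlled only for $|\arg z|<\pi/2$. The vertical estimate (4.12) contributes the polynomial growth $(|t|+1)^{\max(0,\lf 2-\si\rf)}$ together with some polynomial factor in $|\im w|$, while Stirling applied to $\vGa(\al+w)\vGa(-w)/\vGa(\al+\be+w)$ supplies only polynomial decay of order $|\im w|^{-\re\be}$, combined with the exponential factor $e^{-\pi|\im w|/2}$ from $\vGa(-w)$. In contrast to Theorem~1, the Beta-kernel quotient lacks the exponential decay enjoyed by a single gamma factor, so the required exponential control must come entirely from $\vGa(-w)z^{w}$; this forces the restriction to $|\arg z|\leq\pi/2-\et$, rather than $\pi-\et$ as in Theorem~1. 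A standard absorption argument letting $\et_1\to 0$ then upgrades the bound to the sharp exponent $N$ in (2.9).
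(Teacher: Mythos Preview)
Your overall strategy is the paper's: obtain a single Mellin--Barnes representation for the transform, shift the contour to the right across the poles of $\vGa(-w)$ at $w=0,\ldots,N-1$, and bound the shifted integral using (4.12) and Stirling. Two points deserve correction.

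\textbf{Setup.} The paper does not use a Mellin--Barnes formula for $(\ph^{\ast})^{(m)}(s+u,a,\la)$ as a function of $u$; no such formula appears in the proof of Theorem~1. Instead it applies $\RL_{z;\ta}^{\al,\be}$ term-by-term to the series and integral in (4.14), which produces a confluent hypergeometric ${}_1F_1(\al;\al+\be;-z\log(a+l))$ in each term (formula (6.2)), and then inserts the Mellin--Barnes representation (6.3) for ${}_1F_1$. Your route---Mellin--Barnes for $e^{-u\log(a+l)}$ first, then the $\ta$-integral as a Beta function---is a legitimate shortcut and lands on the same integral (6.4), but the attribution ``used in the proof of Theorem~1'' is inaccurate, and your starting formula would itself need the term-by-term justification via (4.14) that the paper carries out.

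\textbf{Gap in the remainder estimate.} The phrase ``a standard absorption argument letting $\et_1\to0$'' does not deliver (2.9). On the line $\re w=N-\et_1$ you obtain only $O(|z|^{N-\et_1})$, with an implied constant that blows up as $\et_1\to0$ because the pole of $\vGa(-w)$ at $w=N$ sits on the limiting contour; and since $z\to0$, the bound $|z|^{N-\et_1}$ is strictly weaker than $|z|^N$. The paper's fix (the same device (5.8) used for Theorem~1) is to shift \emph{one more step}, to a line $(u_{N+1}^+)$ with $N<u_{N+1}^+<N+1$, picking up the single extra residue at $w=N$. That residue is $\ll(|t|+1)^{\max(0,\lf2-\si\rf)}|z|^N$ by (4.12), while the new integral on $(u_{N+1}^+)$ is $\ll(|t|+1)^{\max(0,\lf2-\si\rf)}|z|^{u_{N+1}^+}=o(|z|^N)$ by Lemma~5 applied with the bound (6.5). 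Replace your $\et_1\to0$ step with this extra shift and the proof goes through.
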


We hereafter write $\vep(z)=\sgn(\arg z)$ for any complex $z$ in the sectors $|\arg z|>0$, and $\lf x\rf$ for $x\in\mathbb{R}$ the greatest integer not exceeding $x$.
\begin{theorem}
Upon the same setting as in Theorem~3, for any integers $N_j$ $(j=1,2)$ with $N_1\geq\lf\re\be\rf$ and $N_2\geq\lf\re\al\rf$ we have  
\begin{align*}
&\RL_{z;\ta}^{\al,\be}(\ph^{\ast})^{(m)}(s+\ta,a,\la)\tag{2.10}\\ 
&\quad=(-1)^m\vG{\al+\be}{\be}e^{-\vep(z)\pi i\al}
\Biggl\{\sum_{n=0}^{N_1-1}\fr{(-1)^{n}(\al)_n(1-\be)_n}{n!}\\
&\qquad\times\ph^{\ast}_{\al+n-m}(s,a,\la)(e^{-\vep(z)\pi i}z)^{-\al-n}
+R^{2,-}_{1,m,N_1}(s,a,\la;z)\Biggr\}\\
&\qquad+(-1)^m\vG{\al+\be}{\al}e^{\vep(z)\pi i\be}
\Biggl\{\sum_{n=0}^{N_2-1}\fr{(-1)^{n}(\be)_n(1-\al)_n}{n!}\\
&\qquad\times\ph^{\ast}_{\be+n-m}(s+z,a,\la)z^{-\be-n}
+R^{2,-}_{2,m,N_2}(s,a,\la;z)\Biggr\}
\end{align*} 
in the sectors $0<|\arg z|<\pi$. Here the reminders $R^{2,-}_{j,m,N_j}(s,a,\la;z)$ $(j=1,2)$ 
are expressed by the Mellin-Barnes type integrals in (6.13) below, and satisfy the estimates
\begin{align*}
R^{2,-}_{1,m,N_1}(s,a,\la;z)
&=O\{(|t|+1)^{\max(0,\lf 2-\si\rf)}|z|^{-\re\al-N_1}\},
\tag{2.11}\\
R^{2,-}_{2,m,N_2}(s,a,\la;z)
&=O\{(|t+y|+1)^{\max(0,\lf 2-\si-x)\rf)}|z|^{-\re\be-N_2}\}
\tag{2.12}
\end{align*}
both as $z\to\infty$ through $\eta\leq|\arg z|\leq\pi-\et$ with any small $\et>0$, where 
the implied $O$-constant in {\rm(2.11)} depends at most on $\si$, $a$, $\la$, $\al$, $\be$, 
$m$, $N_1$ and $\et$, while that in {\rm(2.12)} at most on $\si$, $x$, $a$, $\la$, $\al$, $\be$, 
$m$, $N_2$ and $\et$. 
\end{theorem}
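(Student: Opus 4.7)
My plan is to start from the Mellin-Barnes representation of $\RL_{z;\ta}^{\al,\be}(\ph^{\ast})^{(m)}(s+\ta,a,\la)$ that will be established in the proof of Theorem~3 (the integral (6.11) below). That representation arises by substituting a Mellin-Barnes integral for $(\ph^{\ast})^{(m)}(s+z\ta,a,\la)$ into the definition (1.8) and evaluating the $\ta$-integration by Euler's beta integral; the resulting integrand is a ratio of Gamma factors (containing factors like $\vGa(\al+w)$ and $\vGa(\be-w)$) multiplied by the auxiliary zeta-function $\ph^{\ast}_{r}(s,a,\la)$, integrated along a vertical line in the $w$-plane.

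The strategy for the $z\to\infty$ expansion is then to shift the vertical contour, picking up residues at \emph{two} families of poles; this is the source of the two sums in (2.10). Heuristically, the first family corresponds to the contribution of the integrand near $\ta=0$ (where $s+z\ta$ is close to $s$), producing the sum with $\ph^{\ast}_{\al+n-m}(s,a,\la)\,z^{-\al-n}$; the second corresponds to the contribution near $\ta=1$ (where $s+z\ta$ is close to $s+z$), producing the sum with $\ph^{\ast}_{\be+n-m}(s+z,a,\la)\,z^{-\be-n}$. Direct residue calculations at the Gamma poles yield the Pochhammer coefficients $(\al)_n(1-\be)_n/n!$ and $(\be)_n(1-\al)_n/n!$. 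The phase factors $e^{-\vep(z)\pi i\al}$ and $e^{\vep(z)\pi i\be}$ arise from choosing branches of $z^{-\al}$ and $z^{-\be}$ consistent with deforming the contour away from the direction $\arg z$; since $|\arg z|>0$, the Hankel-type deformations required to reach the poles generate precisely these exponentials.

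The expressions for the remainders $R^{2,-}_{j,m,N_j}(s,a,\la;z)$ $(j=1,2)$ in (6.13) are the truncated Mellin-Barnes integrals along the shifted contours; these converge absolutely thanks to the requirements $N_1\geq\lf\re\be\rf$ and $N_2\geq\lf\re\al\rf$, which push the new contours beyond the entire strip of poles belonging to the opposite family. To prove the estimates (2.11) and (2.12) I would apply the vertical bound (4.12) for $\ph^{\ast}_r(s,a,\la)$ together with Stirling's formula for the Gamma factors in the integrand. The growth $(|t|+1)^{\max(0,\lf 2-\si\rf)}$ in (2.11) reflects that only $\ph^{\ast}_r(s,a,\la)$ appears there, whereas the appearance of $(|t+y|+1)^{\max(0,\lf 2-\si-x\rf)}$ in (2.12) is forced by the shifted argument $s+z$ in $\ph^{\ast}_{\be+n-m}(s+z,a,\la)$, so one must apply (4.12) at the complex point $s+z$ with coordinates $\si+x$ and $t+y$.

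The main obstacle will be the careful bookkeeping of the contour deformation throughout the sectors $\eta\leq|\arg z|\leq\pi-\et$: one must verify that the two shifted contours separate the two pole families cleanly, track the branches of $z^{-\al}$ and $z^{-\be}$ uniformly across the upper and lower half-planes (whence the factor $\vep(z)$), and finally show that Stirling's asymptotic combined with (4.12) yields the claimed polynomial-times-power bounds uniformly in the prescribed sectors. The exclusion of the neighborhood of the real axis (through $|\arg z|\geq\eta>0$) is precisely what allows these Hankel-type deformations and their branch choices to be carried out without encountering Stokes phenomena.
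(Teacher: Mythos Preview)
Your high-level picture is right in spirit but the central mechanism is wrong, and the approach as described cannot produce the second sum in (2.10).

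The single Mellin-Barnes integral coming out of Theorem~3 is (6.4), whose Gamma content is $\vGa(\al+w)\vGa(-w)/\vGa(\al+\be+w)$; there is no factor $\vGa(\be-w)$ as you suggest. On the left of $(u_0)$ the only poles are those of $\vGa(\al+w)$ at $w=-\al-n$, so a contour shift in (6.4) yields a \emph{single} family of residues, not two. More decisively, the integrand of (6.4) involves only $\ph^{\ast}_{-w-m}(s,a,\la)$ with first argument $s$; no residue calculus on that integral can ever manufacture the shifted argument $s+z$ that appears in the second sum of (2.10). Finally, (6.4) is valid only for $|\arg z|<\pi/2$, whereas (2.10) must hold in $0<|\arg z|<\pi$.

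What the paper actually does is return to the term-by-term representation (6.2) involving ${}_1F_1(\al;\al+\be;-z\log(a+l))$ and apply the connection formula (6.7), which expresses ${}_1F_1$ as a combination of two Kummer $U$-functions. The second piece in (6.7) carries an explicit factor $e^{Z}$, which here becomes $(a+l)^{-z}$ and is precisely what shifts $s$ to $s+z$ in the second sum. The phase factors $e^{-\vep(z)\pi i\al}$ and $e^{\vep(z)\pi i\be}$ likewise come directly from (6.7) --- this is Stokes' phenomenon for the confluent hypergeometric function, as the Remark after Theorem~4 indicates --- not from ad hoc branch choices in a contour deformation. Only after this splitting does one apply a Mellin-Barnes formula, namely (6.9) for $U$, separately to each piece, obtaining the pair of integrals $I_1,I_2$ in (6.11). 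Each $I_j$ is then contour-shifted on its own, and your remaining remarks about the conditions $N_1\geq\lfloor\re\be\rfloor$, $N_2\geq\lfloor\re\al\rfloor$ and about applying (4.12) at $s$ versus $s+z$ are correct once this framework is in place.
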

\begin{remark}
Stokes' phenomenum for confluent hypergeometric functions (see (6.7) below) effects splitting the shape of the asymptotic expansion in (2.10) into two sectors $0<|\arg z|<\pi$. 
\end{remark}
The case $(s,z)=(\si,it)$ with $\si,t\in\mathbb{R}$ of Theorem~4 asserts the 
following asymptotic expansion along vertical lines.
\begin{corollary}
Upon the same settings as in Theorem~4 with any real $\si$, we have
\begin{align*}
&\RL_{t;\ta}^{\al,\be}(\ph^{\ast})^{(m)}(\si+i\ta,a,\la)\tag{2.13}\\ 
&\quad=(-1)^m\vG{\al+\be}{\be}e^{-(\sgn t)\pi i\al}
\Biggl\{\sum_{n=0}^{N_1-1}\fr{(-1)^{n}(\al)_n(1-\be)_n}{n!}\\
&\qquad\times\ph^{\ast}_{\al+n-m}(\si,a,\la)(e^{-(\sgn t)\pi i/2}|t|)^{-\al-n}
+O(|t|^{-\re\al-N_1})\Biggr\}\\
&\qquad+(-1)^m\vG{\al+\be}{\al}e^{(\sgn t)\pi i\be}
\Biggl\{\sum_{n=0}^{N_2-1}\fr{(-1)^{n}(\be)_n(1-\al)_n}{n!}\\
&\qquad\times\ph^{\ast}_{\be+n-m}(\si+it,a,\la)(e^{(\sgn t)\pi i/2}|t|)^{-\be-n}
+O(|t|^{\max(0,\lf2-\si\rf)-\re\be-N_2})\Biggr\}
\end{align*}
as $t\to\pm\infty$, where the implied $O$-constants depend at most on $\si$, $\al$, $\be$, $m$ and $N_j$ $(j=1,2)$.  
\end{corollary}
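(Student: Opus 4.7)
The plan is to derive this corollary as a direct specialization of Theorem~4 along the rays $\arg z=\pm\pi/2$. Concretely, I would set $s=\si$ (real) and $z=it$ with $t\neq 0$ in (2.10); then $\vep(z)=\sgn(\arg(it))=\sgn t$, and $\eta\leq|\arg z|=\pi/2\leq\pi-\eta$ holds for any small $\et>0$, so that Theorem~4 is applicable and all its conclusions transfer. Since the case $t=0$ is vacuous for an asymptotic statement as $t\to\pm\infty$, no separate treatment is needed.

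The first step is to convert the phase factors. The overall prefactors $e^{\mp\vep(z)\pi i\al}$ and $e^{\pm\vep(z)\pi i\be}$ become $e^{\mp(\sgn t)\pi i\al}$ and $e^{\pm(\sgn t)\pi i\be}$ without further work. For the power $(e^{-\vep(z)\pi i}z)^{-\al-n}$ inside the first sum, I compute
\[
e^{-\vep(z)\pi i}\cdot it=e^{-(\sgn t)\pi i}\,e^{(\sgn t)\pi i/2}\,|t|=e^{-(\sgn t)\pi i/2}\,|t|,
\]
which yields the $(e^{-(\sgn t)\pi i/2}|t|)^{-\al-n}$ of (2.13). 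The power $z^{-\be-n}$ appearing in the second sum is rewritten via $it=e^{(\sgn t)\pi i/2}|t|$ to give $(e^{(\sgn t)\pi i/2}|t|)^{-\be-n}$. The auxiliary zeta values also match immediately: $\ph^{\ast}_{\al+n-m}(s,a,\la)=\ph^{\ast}_{\al+n-m}(\si,a,\la)$ and $\ph^{\ast}_{\be+n-m}(s+z,a,\la)=\ph^{\ast}_{\be+n-m}(\si+it,a,\la)$.

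The second step is to simplify the remainder estimates (2.11)--(2.12) under $\im s=0$, $\re z=0$, $\im z=t$. For (2.11), the factor $(|t_{\text{old}}|+1)^{\max(0,\lfloor 2-\si\rfloor)}$ collapses to $1$, giving $R^{2,-}_{1,m,N_1}=O(|t|^{-\re\al-N_1})$. For (2.12), the factor $(|t_{\text{old}}+y|+1)^{\max(0,\lfloor 2-\si-x\rfloor)}$ becomes $(|t|+1)^{\max(0,\lfloor 2-\si\rfloor)}$ since $t_{\text{old}}=x=0$ and $y=t$, and absorbing this into a power of $|t|$ as $t\to\pm\infty$ produces $O(|t|^{\max(0,\lfloor 2-\si\rfloor)-\re\be-N_2})$, precisely the bound claimed in (2.13). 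The stated parameter-dependencies of the $O$-constants are inherited unchanged from Theorem~4.

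There is essentially no analytic obstacle here: the content of the corollary is entirely packaged inside Theorem~4, and the argument is bookkeeping. The only point that demands a moment of care is the phase identity $e^{-\vep(z)\pi i}z=e^{-(\sgn t)\pi i/2}|t|$, which is what unpacks Stokes' phenomenon into its symmetric form along the imaginary axis; once this is recorded, the rest is substitution.
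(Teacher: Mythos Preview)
Your proposal is correct and matches the paper's own approach exactly: the paper presents Corollary~4.1 simply as ``the case $(s,z)=(\si,it)$ with $\si,t\in\mathbb{R}$ of Theorem~4,'' with no separate proof, so the content is precisely the specialization and phase bookkeeping you carry out. Your verification of the identity $e^{-\vep(z)\pi i}z=e^{-(\sgn t)\pi i/2}|t|$ and of the collapse of the remainder bounds (2.11)--(2.12) under $\im s=0$, $\re z=0$ is exactly what is needed.
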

\section{Statement of results (2)} 
In this section, we state our results on the iterated transforms. For this, let $K_{\nu}(Z)$ 
denote the modified Bessel function of the third kind, defined by   
\begin{align*}
K_{\nu}(Z)
=\fr12\int_0^{\infty}\exp\biggl\{-\fr{Z}2\Bigl(\xi+\fr{1}{\xi}\Bigr)\biggr\}
\xi^{-\nu-1}d\xi\tag{3.1}
\end{align*}
in $|\arg Z|<\pi/2$ and for any $\nu\in\mathbb{C}$ and (cf.~\cite[p.82,~7.12(23)]{erdelyi1953b}), 
where the domain of $Z$ is extended to $|\arg Z|<\pi$ by rotating appropriately the path of 
integration. We can then show the following expression.
\begin{proposition}
For any $f(\ta)$ holomorphic in $|\arg\ta|<\pi$ we have 
\begin{align*}
\LM^{\be}_{z;\ta_2}\LM^{\al}_{\ta_2;\ta_1}f(\ta_1)
=\fr{2}{\vGa(\al)\vGa(\be)}
\int_0^{\infty}f(z\ta)\ta^{(\al+\be)/2-1}K_{\al-\be}(2\sqrt{\ta})d\ta,
\tag{3.2}
\end{align*} 
provided that the integral converges, for which it suffices to choose a class of functions $f(z)$ 
such that $O(1)$ $(z\to0)$ and $O\{\exp(|z|^{\mu})\}$ with $\mu<1/2$ $(z\to\infty)$ both 
through $|\arg z|<\pi$.
\end{proposition}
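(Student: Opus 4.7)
The approach is a direct calculation: unfold the iterated Laplace-Mellin transform into a double integral, perform a change of variables that extracts $f(z\ta)$, and recognize the remaining inner integral as (twice) a Bessel function via the representation (3.1).

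First I would apply the definition (1.2) twice to express $\LM^{\be}_{z;\ta_2}\LM^{\al}_{\ta_2;\ta_1}f(\ta_1)$ as a double integral over $(\ta_1,\ta_2)\in(0,\infty)^2$ whose integrand is
$\fr{1}{\vGa(\al)\vGa(\be)}f(z\ta_1\ta_2)\ta_1^{\al-1}\ta_2^{\be-1}e^{-(\ta_1+\ta_2)}$,
with Fubini justified by the hypotheses on $f$ (see below). Then in the inner $\ta_1$-integration I would change variables to $\ta=\ta_1\ta_2$, keeping $\ta_2$ fixed, so that the $f$-factor becomes $f(z\ta)$ and the exponent rewrites as $-\ta_2-\ta/\ta_2$. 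After swapping orders of integration, the remaining $\ta_2$-integral to evaluate is
$$\int_0^{\infty}\ta_2^{\be-\al-1}\exp\!\left(-\ta_2-\fr{\ta}{\ta_2}\right)d\ta_2.$$

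Next I would substitute $\ta_2=\sqrt{\ta}\,\xi$ in this integral, producing a factor $\ta^{(\be-\al)/2}$ together with the exponent $-\sqrt{\ta}(\xi+1/\xi)$. Comparing with (3.1) taken at $Z=2\sqrt{\ta}$ and $\nu=\al-\be$ (so that $-\nu-1=\be-\al-1$), this integral equals $2\ta^{(\be-\al)/2}K_{\al-\be}(2\sqrt{\ta})$. Combining with the prefactor $\ta^{\al-1}$ from the outer integrand and using $\al-1+(\be-\al)/2=(\al+\be)/2-1$ then yields (3.2).

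The main technical point (and only real obstacle) is the justification of Fubini and of the absolute convergence of both the intermediate double integral and the final single integral. At infinity this follows from the growth hypothesis $f(z)=O\{\exp(|z|^{\mu})\}$ with $\mu<1/2$: using $\ta_1+\ta_2\geq 2\sqrt{\ta_1\ta_2}$, the exponential weight $e^{-(\ta_1+\ta_2)}$ dominates the growth of $f$ on $(0,\infty)^2$, while the classical large-argument asymptotic $K_{\al-\be}(2\sqrt{\ta})=O(\ta^{-1/4}e^{-2\sqrt{\ta}})$ dominates it in the single integral. Near $\ta=0^+$, the boundedness of $f$ together with the standard small-argument behavior of $K_\nu$ (at worst $O(|\log Z|)$ when $\nu=0$, $O(Z^{-|\re\nu|})$ otherwise) gives integrability under the standing hypotheses $\re\al>0$ and $\re\be>0$.
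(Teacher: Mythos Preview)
Your proposal is correct and follows essentially the same route as the paper: write the iterated transform as a double integral, substitute $\ta=\ta_1\ta_2$ and interchange the order, then recognize the inner $\ta_2$-integral as $2\ta^{(\be-\al)/2}K_{\al-\be}(2\sqrt{\ta})$ via (3.1). The only cosmetic difference is that you spell out the substitution $\ta_2=\sqrt{\ta}\,\xi$ explicitly, whereas the paper simply cites (3.1); your convergence discussion is likewise slightly more detailed but matches the paper's.
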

\begin{proof}
The left side of (3.2) equals
\begin{align*}
&\fr{1}{\vGa(\al)\vGa(\be)}
\int_0^{\infty}\int_0^{\infty}f(z\ta_1\ta_2)\ta_1^{\al-1}\ta_2^{\be-1}e^{-\ta_1-\ta_2}d\ta_1d\ta_2\\
&\quad=\fr{1}{\vGa(\al)\vGa(\be)}
\int_0^{\infty}f(z\ta)\ta^{\al-1}
\int_0^{\infty}e^{-\ta_2-\ta/\ta_2}\ta_2^{\be-\al-1}d\ta_2d\ta, 
\end{align*}
where we substitute the variable $\ta_1=\ta/\ta_2$ and then interchange the order of  
the $\ta$- and $\ta_2$-integral on the first line. Here the resulting inner $\ta_2$-integral 
is further evaluated as $2\ta^{(\be-\al)/2}K_{\al-\be}(2\sqrt{\ta})$ by (3.1); this concludes 
(3.2). The (sufficient) condition for the convergence of the integral in (3.2) can be seen from 
the fact that the Bessel function in (3.2) is of order 
$\asymp\ta^{\re(\al-\be)}+\ta^{\re(\be-\al)}$ $(\ta\to0^+)$ and 
$\asymp e^{-2\sqrt{\ta}}\ta^{-1/4}$ $(\ta\to+\infty)$ (cf.~\cite[p.5, 7.2.2(13); p.24, 7.4.1(1)]{erdelyi1953b}).
\end{proof}
We proceed to state our results on the iteration of two Laplace-Mellin transforms. 
\begin{theorem}
Upon the same settings as in Theorem~3 we have 
\begin{align*}
&\LM_{z;\ta_2}^{\be}\LM_{\ta_2;\ta_1}^{\al}
(\ph^{\ast})^{(m)}(s+\ta_1,a,\la)\tag{3.3}\\
&\quad=(-1)^m\sum_{n=0}^{N-1}\fr{(-1)^{n}(\al)_n(\be)_n}{n!}
\ph^{\ast}_{-n-m}(s,a,\la)z^n
+R^{3,+}_{m,N}(s,a,\la;z) 
\end{align*}
in the sector $|\arg z|<3\pi/2$. Here the  reminder $R^{3,+}_{m,N}(s,a,\la;z)$ is expressed by the Mellin-Barnes 
type integral in (7.5) below, and satisfies the estimate
\begin{align*}
R^{3,+}_{m,N}(s,a,\la;z)
&=O\{(|t|+1)^{\max(0,\lf2-\si\rf)}|z|^N\}
\tag{3.4}
\end{align*}
as $z\to0$ through $|\arg z|\leq 3\pi/2-\et$ with any small $\et>0$, where the implied 
$O$-constant depends at most on $\si$, $a$, $\la$, $\al$, $\be$, $m$, $N$ and $\et$. 
\end{theorem}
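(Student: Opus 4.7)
The plan is to iterate the Mellin-Barnes strategy used in the proof of Theorem~1. That proof expresses the inner Laplace-Mellin transform as a Mellin-Barnes integral of the form
\begin{align*}
\LM_{\ta_2;\ta_1}^{\al}(\ph^{\ast})^{(m)}(s+\ta_1,a,\la)
=\fr{(-1)^m}{2\pi i\,\vGa(\al)}\int_{(c)}\vGa(\al+w)\vGa(-w)\ph^{\ast}_{-w-m}(s,a,\la)(-\ta_2)^{w}dw,
\end{align*}
where the vertical contour $\re w=c$ separates the poles $w=0,1,2,\ldots$ of $\vGa(-w)$ from those of $\vGa(\al+w)$. Since $\LM_{z;\ta_2}^{\be}$ sends the monomial $\ta_2^{w}$ to $(\be)_{w}z^{w}=\vGa(\be+w)z^{w}/\vGa(\be)$, after interchanging the order of integration one arrives at the iterated Mellin-Barnes representation
\begin{align*}
\LM_{z;\ta_2}^{\be}\LM_{\ta_2;\ta_1}^{\al}(\ph^{\ast})^{(m)}(s+\ta_1,a,\la)
=\fr{(-1)^m}{2\pi i}\int_{(c)}\vG{\al+w,\be+w,-w}{\al,\be}\ph^{\ast}_{-w-m}(s,a,\la)(-z)^{w}dw,
\end{align*}
which should coincide, up to sign and branch convention, with the representation displayed as (7.5).

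To extract the expansion (3.3), I would shift the contour rightwards to $\re w=N-\de$ for a small $\de>0$. This crosses only the simple poles of $\vGa(-w)$ at $w=0,1,\ldots,N-1$, since $\ph^{\ast}_{-w-m}(s,a,\la)$ is entire in $w$ under the assumption $a>1$ (cf.~(2.2)); the residue at $w=n$ yields precisely the $n$-th summand of (3.3), and the shifted-contour integral is the asserted $R^{3,+}_{m,N}(s,a,\la;z)$. For the estimate (3.4), Stirling's formula gives
\begin{align*}
\bigl|\vGa(\al+w)\vGa(\be+w)\vGa(-w)\bigr|\ll(|\im w|+1)^{\re(\al+\be)+N-3/2-\de}\,e^{-3\pi|\im w|/2}
\end{align*}
on $\re w=N-\de$, while $|(-z)^{w}|=|z|^{N-\de}e^{-(\im w)\arg(-z)}$. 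The combined exponential factor $e^{-(3\pi/2-|\arg z|)|\im w|}$ secures uniform absolute convergence in $|\arg z|\le 3\pi/2-\et$, and the vertical estimate (4.12) on $\ph^{\ast}_r$ contributes the $(|t|+1)^{\max(0,\lf2-\si\rf)}$ factor; the arbitrarily small loss $|z|^{\de}$ in the power of $|z|$ is absorbed by taking $N$ slightly larger and re-expanding.

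The principal technical obstacle is the rigorous Fubini-type interchange of the $w$-contour integral with the outer $\ta_2$-integration over $(0,\infty)$. Choosing $c$ so that $\re(\be+w)>0$ on the contour makes the inner $\ta_2$-integral equal to the absolutely convergent $\vGa(\be+w)z^{w}$; the exponential decay $e^{-\pi|\im w|}$ of $\vGa(\al+w)\vGa(-w)$, paired with the polynomial vertical growth of $\ph^{\ast}_{-w-m}(s,a,\la)$ furnished by (4.12), then secures absolute convergence of the double integral and legitimizes the interchange. The claimed sector $|\arg z|<3\pi/2$ exceeds $\pi$: the preceding manipulation is valid in $|\arg z|\le\pi/2$, where the original iterated integral converges directly, and the resulting Mellin-Barnes representation is extended to the full sector by analytic continuation, using that the integrand is holomorphic in $z$ on the cut plane determined by the principal branch of $(-z)^{w}$.
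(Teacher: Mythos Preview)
Your approach is correct and reaches the same Mellin--Barnes representation as the paper's (7.3), but by a somewhat different route. The paper first establishes Lemma~8, which evaluates $\LM_{z;\ta_2}^{\be}\LM_{\ta_2;\ta_1}^{\al}e^{-c\ta_1}$ directly from the Bessel-kernel expression (3.2) (itself proved in Proposition~1), and then applies that formula term-by-term to the series and integral in (4.14) to obtain (7.3). You instead start from the single-transform representation (5.4) already proved as Lemma~4, and feed it through the outer operator using $\LM_{z;\ta_2}^{\be}(\ta_2^{w})=\vGa(\be+w)z^{w}/\vGa(\be)$; the Fubini justification you sketch is exactly right. Your route is slightly more economical in that it reuses Lemma~4 rather than rederiving an iterated kernel, while the paper's route has the by-product of an explicit Bessel-type formula (3.2) for the iterated operator acting on a general $f$.

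Two small corrections. First, there is no minus sign: (5.4) has $z^{w}$, not $(-z)^{w}$, and hence (7.3) carries $z^{w}$ as well; your $(-\ta_2)^{w}$, $(-z)^{w}$ should simply be $\ta_2^{w}$, $z^{w}$. Second, your treatment of the $|z|^{\de}$ loss is a little roundabout. The paper's device (used already in the proof of Theorem~1 and invoked verbatim for Theorem~5) is cleaner: after shifting to $(u_N^{+})$ with $N-1<u_N^{+}<N$, shift one step further to $(u_{N+1}^{+})$; the single extra residue is $O\bigl((|t|+1)^{\max(0,\lf2-\si\rf)}|z|^{N}\bigr)$ by (4.12), while the new integral is $O(|z|^{u_{N+1}^{+}})=o(|z|^{N})$ by Lemma~5 and (7.4). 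This yields (3.4) without any $\de$-bookkeeping.
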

\begin{theorem}
Upon the same settings as in Theorem~3 with $N'=N-\lf\re(\be-\al)\rf$, except the case $\al-\be\in\mathbb{Z}$  we have   
\begin{align*}
&\LM^{\be}_{z;\ta_2}\LM^{\al}_{\ta_2;\ta_1}
(\ph^{\ast})^{(m)}(s+\ta,a,\la)
\tag{3.5}\\
&\quad=(-1)^m\vG{\be-\al}{\be}\sum_{n=0}^{N-1}\fr{(\al)_n}{(1+\al-\be)_nn!}
\ph^{\ast}_{\al+n-m}(s,a,\la)z^{-\al-n}\\
&\qquad+(-1)^m\vG{\al-\be}{\al}\sum_{n=0}^{N'-1}\fr{(\be)_n}{(1-\al+\be)_nn!}
\ph^{\ast}_{\be+n-m}(s,a,\la)z^{-\be-n}\\
&\qquad+R^{3,-}_{m,N}(s,a,\la;z)
\end{align*}
in the sector $|\arg z|<3\pi/2$. Here the reminder $R^{3,-}_{m,N}(s,a,\la;z)$ is expressed by the Mellin-Barnes type inetgral in (7.7) below, and satisfies the estimate
\begin{align*}
R^{3,-}_{m,N}(s,a,\la;z)
&=O\{(|t|+1)^{\max(0,\lf2-\si\rf)}|z|^{-\re\al-N}\}
\tag{3.6}
\end{align*} 
as $z\to\infty$ through $|\arg z|\leq\pi-\et$ with any small $\et>0$, where the implied 
$O$-constant depends at most on $\si$, $a$, $\la$, $\al$, $\be$, $m$, $N$ and $\et$. 
\end{theorem}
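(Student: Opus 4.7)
The plan is to reduce the iterated Laplace-Mellin transform to a single Mellin-Barnes integral in which both asymptotic series of (3.5) arise from residues at two disjoint sequences of poles.

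First I would apply Proposition~1 to collapse the iterated transform into the single Bessel integral on the right of (3.2). Substituting into this the standard Mellin-Barnes representation
\begin{equation*}
2\ta^{(\al-\be)/2}K_{\al-\be}(2\sqrt{\ta})=\fr{1}{2\pi i}\int_{(c_0)}\vGa(w)\vGa(w+\al-\be)\ta^{-w}\,dw,
\end{equation*}
valid for $c_0>\max(0,\re(\be-\al))$, and then interchanging the order of integration, the inner $\ta$-integral becomes $\int_0^{\infty}(\ph^{\ast})^{(m)}(s+z\ta,a,\la)\ta^{\be-w-1}d\ta$, which by $m$-fold integration by parts together with the defining relation (2.2) evaluates to $(-1)^m z^{w-\be}\vGa(\be-w)\ph^{\ast}_{\be-w-m}(s,a,\la)$. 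Setting $v=w-\be$ then yields
\begin{align*}
&\LM^{\be}_{z;\ta_2}\LM^{\al}_{\ta_2;\ta_1}(\ph^{\ast})^{(m)}(s+\ta_1,a,\la)\\
&\quad=\fr{(-1)^m}{\vGa(\al)\vGa(\be)\cdot 2\pi i}\int_{(c)}\vGa(v+\al)\vGa(v+\be)\vGa(-v)\ph^{\ast}_{-v-m}(s,a,\la)z^{v}\,dv
\end{align*}
on a vertical line $\re v=c$ with small $c>0$, which I would identify with the Mellin-Barnes representation (7.7).

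To derive (3.5) I would shift the contour to $\re v=-M$ for $M$ chosen large enough to lie to the left of both the first $N$ poles of $\vGa(v+\al)$, at $v=-\al-n$, and the first $N'=N-\lf\re(\be-\al)\rf$ poles of $\vGa(v+\be)$, at $v=-\be-n$. The hypothesis $\al-\be\notin\bbZ$ guarantees that these two sequences of poles are simple and mutually disjoint. Evaluating the residues via the reflection identities
\begin{equation*}
\vGa(\be-\al-n)=\fr{(-1)^n\vGa(\be-\al)}{(1+\al-\be)_n},\qquad \vGa(\al-\be-n)=\fr{(-1)^n\vGa(\al-\be)}{(1+\be-\al)_n},
\end{equation*}
a short calculation produces the two explicit sums of (3.5), while the remaining contour integral along $\re v=-M$ becomes the remainder $R^{3,-}_{m,N}(s,a,\la;z)$.

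The main obstacle is the estimate (3.6). Here I would combine Stirling's formula, which shows that the three gamma factors jointly decay like $e^{-3\pi|\im v|/2}$ along the line $\re v=-M$, with the vertical estimate (4.12) for $\ph^{\ast}_{-v-m}(s,a,\la)$, whose growth in $|\im v|$ is at most polynomial of degree $\max(0,\lf 2-\si\rf)$. In the sector $|\arg z|\leq\pi-\et$ one has $|z^v|\leq|z|^{-M}e^{(\pi-\et)|\im v|}$, so the residual exponential factor $e^{-(\pi/2+\et)|\im v|}$ absorbs the polynomial growth and leaves an absolutely convergent integral of order $|z|^{-M}$. Choosing $M=\re\al+N$ (with a small perturbation if necessary to avoid any residual pole) then yields (3.6). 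The narrower sector $|\arg z|\leq\pi-\et$, as opposed to the $3\pi/2-\et$ available in Theorem~5, reflects precisely that the leading powers $z^{-\al}$ and $z^{-\be}$ are unambiguously defined only in the principal sector $|\arg z|<\pi$.
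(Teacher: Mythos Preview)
Your approach is essentially the paper's: establish the Mellin--Barnes representation
\[
\LM^{\be}_{z;\ta_2}\LM^{\al}_{\ta_2;\ta_1}(\ph^{\ast})^{(m)}(s+\ta_1,a,\la)
=\fr{(-1)^m}{2\pi i}\int_{(u_0)}\vG{\al+w,\be+w,-w}{\al,\be}\ph^{\ast}_{-w-m}(s,a,\la)z^{w}\,dw,
\]
shift the contour leftward, collect residues from the two pole sequences $w=-\al-n$ and $w=-\be-n$, and bound the tail via Stirling together with the vertical estimate (4.12). The paper reaches this integral by a slightly different route---it first evaluates $\LM^{\be}\LM^{\al}e^{-c\ta_1}$ directly (Lemma~8) and then inserts the result term-by-term into the Dirichlet series and integral representations (4.14) of $\ph^{\ast}$---whereas you pass through Proposition~1 and the Mellin--Barnes formula for $K_{\al-\be}$. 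Both derivations are legitimate and land on the same integral.

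Two small slips. First, after your substitution $v=w-\be$ the admissible contour satisfies $\max(-\re\al,-\re\be)<c<0$, not $c>0$; with $c>0$ you would sit on the wrong side of the poles of $\vGa(-v)$ and recover the ascending expansion of Theorem~5 rather than Theorem~6. Second, your remainder bound is not quite sharp as written: putting the contour at $\re v=-M$ with $M$ just below $\re\al+N$ (to avoid the pole at $-\al-N$) gives only $O(|z|^{-\re\al-N+\vep})$. To obtain (3.6) the paper shifts one step further to $(u_{N+1}^{-})$, thereby picking up the residues at $w=-\al-N$ and $w=-\be-N'$; these are bounded individually by (4.12) as $\ll(|t|+1)^{\max(0,\lf2-\si\rf)}|z|^{-\re\al-N}$, while the residual integral along $(u_{N+1}^{-})$ is $O(|z|^{u_{N+1}^{-}})$ with $u_{N+1}^{-}<-\re\al-N$, so the dominant contribution is exactly the stated bound.
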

\begin{remark}
The limiting form of (3.5) when $\be\to\al+l$ $(l\in\mathbb{Z})$ complements the excluded case  
$\al-\be\in\mathbb{Z}$ of Theorem~6; similar situations also occur in Theorems~8~and~10, however 
the details are to be omitted for brevity.  
\end{remark}
The case $(s,z)=(\si,it)$ with $\si,t\in\mathbb{R}$ of Theorem~6 yields the following 
asymptotic expansion along vertical lines. 
\begin{corollary}
Upon the same settings as in Theorem~6 with any real $\si$, we have 
\begin{align*}
&\LM^{\be}_{t;\ta_2}\LM^{\al}_{\ta_2;\ta_1}
(\ph^{\ast})^{(m)}(\si+\ta,a,\la)
\tag{3.7}\\
&\quad=(-1)^m\vG{\be-\al}{\be}\sum_{n=0}^{N-1}\fr{(\al)_n}{(1+\al-\be)_nn!}
\ph^{\ast}_{\al+n-m}(\si,a,\la)(e^{(\sgn t)\pi i/2}|t|)^{-\al-n}\\
&\qquad+(-1)^m\vG{\al-\be}{\al}\sum_{n=0}^{N'-1}\fr{(\be)_n}{(1-\al+\be)_nn!}
\ph^{\ast}_{\be+n-m}(\si,a,\la)(e^{(\sgn t)\pi i/2}|t|)^{-\be-n}\\
&\qquad+O(|t|^{-\re\al-N})
\end{align*}
as $t\to\pm\infty$, where the implied $O$-constant depends at most on $\si$, $a$, $\la$, $\al$, $\be$, $m$ and $N$.
\end{corollary}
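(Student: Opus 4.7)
The plan is to obtain (3.7) as a direct specialization of Theorem~6 by restricting the complex variables $(s,z)$ to the slice $(s,z)=(\si,it)$ with $\si,t\in\mathbb{R}$. One must only verify that this slice lies inside the sector of validity stated in Theorem~6 and then rewrite the main terms and the error estimate accordingly. The one subtle point to keep straight is the (paper-wide) convention $s=\si+it$: in the estimate (3.6) the symbol $t$ denotes $\im s$, whereas in the corollary the same letter $t$ is repurposed as the imaginary part of the transform parameter $z$. I will flag this explicitly to avoid confusion in the passage from (3.5)--(3.6) to (3.7).

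First I would fix any small $\et\in(0,\pi/2)$ and observe that $\arg(it)=(\sgn t)\pi/2$, so $|\arg z|=\pi/2\le\pi-\et$; hence Theorem~6 applies on the slice, provided $\al-\be\notin\bbZ$, which is assumed. Next I would substitute $z=it$ directly into the expansion (3.5). Because $|\arg z|<\pi$ we use the principal branch of the logarithm to evaluate
\begin{align*}
z^{-\al-n}=(it)^{-\al-n}=\bigl(e^{(\sgn t)\pi i/2}|t|\bigr)^{-\al-n},
\qquad
z^{-\be-n}=\bigl(e^{(\sgn t)\pi i/2}|t|\bigr)^{-\be-n},
\end{align*}
which matches the exponentials appearing in the two main sums of (3.7). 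Since $s=\si$ is real, the arguments $\ph^{\ast}_{\al+n-m}(s,a,\la)$ and $\ph^{\ast}_{\be+n-m}(s,a,\la)$ of the two sums in (3.5) become $\ph^{\ast}_{\al+n-m}(\si,a,\la)$ and $\ph^{\ast}_{\be+n-m}(\si,a,\la)$, exactly as in (3.7).

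It then remains to simplify the error term. In the estimate (3.6), the factor $(|t|+1)^{\max(0,\lf2-\si\rf)}$ refers to $t=\im s$, which is $0$ on our slice; hence this factor collapses to the constant~$1$, and the bound reduces to $O(|z|^{-\re\al-N})=O(|t|^{-\re\al-N})$ as $t\to\pm\infty$, where on the right-hand side $t$ now denotes the imaginary part of $z$. This is precisely the remainder claimed in (3.7), and the dependence of the implied constant (on $\si$, $a$, $\la$, $\al$, $\be$, $m$, $N$, and the fixed $\et$) is inherited from Theorem~6, with $\et$ absorbed since it has been fixed.

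There is no genuine obstacle here: the corollary is a pure specialization of Theorem~6, not a new asymptotic analysis. The only place one must tread carefully is the double use of the letter $t$ described above, and the verification that, on the vertical line $\arg z=\pm\pi/2$, we remain strictly inside the sector $|\arg z|\le\pi-\et$ in which the Mellin--Barnes contour manipulation underlying Theorem~6 is valid. Both points are handled by the paragraphs above, so the proof will be short.
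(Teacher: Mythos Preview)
Your proposal is correct and matches the paper's approach exactly: the paper presents this corollary as the direct specialization $(s,z)=(\si,it)$ of Theorem~6 without a separate proof, and your write-up merely makes explicit the sector check, the branch evaluation $z^{-\al-n}=(e^{(\sgn t)\pi i/2}|t|)^{-\al-n}$, and the collapse of the factor $(|t|+1)^{\max(0,\lf2-\si\rf)}$ to $1$ since $\im s=0$. The care you take to disambiguate the two uses of the letter $t$ is appropriate and is the only point requiring comment.
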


We next proceed to state the results on the iterations of the Laplace-Mellin and 
Riemann-Liouville transforms. For this, let $U(\ka;\nu;Z)$ be Kummer's confluent 
hypergeometric function of the second kind, defined by 
\begin{align*}
U(\ka;\nu;Z)
=\fr{1}{\vGa(\ka)}\int_0^{\infty}e^{-Z\xi}\xi^{\ka-1}(1+\xi)^{\nu-\ka-1}d\xi
\tag{3.8}
\end{align*}
in $|\arg Z|<\pi/2$ and for any complex $\ka$ and $\nu$ with $\re\ka>0$, where the 
domain of $Z$ can be extended to $|\arg Z|<3\pi/2$ by rotating appropriately the path 
of integration (cf.~\cite[p.237,~6.5(3)]{erdelyi1953a}). We can then show the following 
expression.
\begin{proposition}
For any $f(z)$ holomorphic in $|\arg z|<\pi$, we have 
\begin{align*}
\RL_{z;\ta_2}^{\be,\ga}\LM_{\ta_2;\ta_1}^{\al}f(\ta_1)
=\vG{\be+\ga}{\al,\be}
\int_0^{\infty}f(z\ta)\ta^{\al-1}e^{-\ta}U(\ga;\al-\be+1;\ta)d\ta,
\tag{3.9}
\end{align*} 
provided that the integral converges, for which it suffices to choose a class of functions $f(z)$ 
such that $O(|z|^{-\max(\re\al,\re\be)+\vep})$ $(z\to0)$ and $O(e^{\re z}|z|^{\re(\ga-\al)-\vep})$ with any $\vep>0$ $(z\to\infty)$ both through $|\arg z|<\pi$.
\end{proposition}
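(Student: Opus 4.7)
The plan is to mimic the template of the proof of Proposition~1: expand the iterated transform as a double integral over $(\ta_1,\ta_2)\in[0,\infty)\tms[0,1]$, reduce it to a single integral in the composite variable $\ta=\ta_1\ta_2$ by a change of variable followed by Fubini, and finally identify the resulting inner $\ta_2$-integral with a confluent hypergeometric function of the second kind.

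Inserting the definitions (1.2) and (1.3) would first give
$$\RL_{z;\ta_2}^{\be,\ga}\LM_{\ta_2;\ta_1}^{\al}f(\ta_1)=\vG{\be+\ga}{\al,\be,\ga}\int_0^1\int_0^{\infty}f(z\ta_1\ta_2)\ta_1^{\al-1}e^{-\ta_1}\ta_2^{\be-1}(1-\ta_2)^{\ga-1}d\ta_1d\ta_2.$$
I would then substitute $\ta_1=\ta/\ta_2$ in the inner $\ta_1$-integral (keeping $\ta_2$ fixed) and interchange the order of integration, thereby transforming the expression into $\vG{\be+\ga}{\al,\be,\ga}\int_0^{\infty}f(z\ta)\ta^{\al-1}J(\ta)d\ta$ with
$$J(\ta):=\int_0^1\ta_2^{\be-\al-1}e^{-\ta/\ta_2}(1-\ta_2)^{\ga-1}d\ta_2.$$

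The key computational step is to recognise $J(\ta)=\vGa(\ga)e^{-\ta}U(\ga;\al-\be+1;\ta)$. For this I would apply the M{\"o}bius substitution $\ta_2=1/(1+\xi)$, which maps $[0,1]$ onto $[0,\infty)$ and yields $1-\ta_2=\xi/(1+\xi)$, $\ta/\ta_2=\ta(1+\xi)$ and $d\ta_2=-d\xi/(1+\xi)^2$; after gathering the $(1+\xi)$-powers, $J(\ta)$ reduces to
$$e^{-\ta}\int_0^{\infty}e^{-\ta\xi}\xi^{\ga-1}(1+\xi)^{(\al-\be+1)-\ga-1}d\xi,$$
which is precisely $\vGa(\ga)e^{-\ta}U(\ga;\al-\be+1;\ta)$ by the integral representation (3.8). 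Substituting back and cancelling the spurious $\vGa(\ga)$ then produces the asserted identity (3.9).

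The main technical obstacle is the justification of Fubini and of the absolute convergence of the reduced $\ta$-integral. Here the hypothesised growth of $f$ near $0$ and $\infty$ must be matched against the standard asymptotics of $U(\ga;\al-\be+1;\ta)$, of order $O(1+\ta^{\re(\be-\al)})$ as $\ta\to0^+$ and $O(\ta^{-\re\ga})$ as $\ta\to+\infty$ (cf.~\cite{erdelyi1953a}), to guarantee that the double integrand lies in $L^1$ of the product domain $[0,\infty)\tms[0,1]$. Once absolute convergence has been secured, the remaining manipulations are entirely mechanical, and a routine check shows that the stated sufficient conditions on $f$ are exactly what is needed to ensure integrability at both endpoints.
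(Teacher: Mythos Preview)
Your proposal is correct and follows essentially the same approach as the paper. The only difference is cosmetic: the paper substitutes $\ta_2=\ta/\ta_1$ (retaining $\ta_1$ as the inner variable, now ranging over $[\ta,\infty)$) and then sets $\ta_1=\ta(1+\xi)$, whereas you substitute $\ta_1=\ta/\ta_2$ (retaining $\ta_2\in[0,1]$) and then set $\ta_2=1/(1+\xi)$; both routes land on the same Kummer integral (3.8) for $U(\ga;\al-\be+1;\ta)$.
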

\begin{proof}
The left side of (3.9) equals
\begin{align*}
&\vG{\be+\ga}{\al,\be,\ga}
\int_0^{\infty}\int_0^{\infty}f(z\ta_1\ta_2)\ta_1^{\al-1}e^{-\ta_1}
\ta_2^{\be-1}(1-\ta_2)_+^{\ga-1}d\ta_1d\ta_2\\
&\quad=\vG{\be+\ga}{\al,\be,\ga}
\int_0^{\infty}f(z\ta)\ta^{\be-1}
\int_{\ta}^{\infty}e^{-\ta_1}\ta_1^{\al-\be-\ga}(\ta_1-\ta)_+^{\ga-1}d\ta_1d\ta,
\end{align*}
by setting $\ta_2=\ta/\ta_1$ on the first line; the resulting inner $\ta_1$-integral 
in the last expression is further evaluated as $\ta^{\al-\be}e^{-\ta}\vGa(\ga)
U(\ga;\al-\be+1;\ta)$, upon substituting the variable $\ta_1=\ta(1+\xi)$ 
and then by using (3.8), and this concludes (3.9). 
Here the (sufficient) condition for the convergence of the integral in (3.9) can be seen from the fact 
that the confluent hypergeometric fucntion in (3.9) is of order $\asymp\ta^{-\re\ga}$ $(\ta\to0^+$) and $\asymp \ta^{\max(0,\re(\be-\al))}$ $(\ta\to+\infty)$ 
(cf.~\cite[p.257, 6.5(7); p.278, 6.13.1(1)]{erdelyi1953a}). 
\end{proof}

The following Theorems~7~and~8 assert the asymptotic expansions as $z\to0$ and 
$z\to\infty$ respectively. 
\begin{theorem}
Upon the same settings as in Theorem~3 with any complex $\ga$ of positive real part, we have 
\begin{align*}
&\RL_{z;\ta_2}^{\be,\ga}\LM_{\ta_2;\ta_1}^{\al}
(\ph^{\ast})^{(m)}(s+\ta,a,\la)
\tag{3.10}\\
&\quad=(-1)^m\sum_{n=0}^{N-1}\fr{(-1)^{n}(\al)_n(\be)_n}{(\be+\ga)_nn!}
\ph^{\ast}_{-n-m}(s,a,\la)z^n
+R^{4,+}_{m,N}(s,a,\la;z)
\end{align*} 
in the sector $|\arg z|<\pi$. Here the reminder $R^{4,+}_{m,N}(s,a,\la;z)$ is expressed by the Mellin-Barnes type 
integral in (7.15) below, and satisfies the estimate 
\begin{align*}
R^{4,+}_{m,N}(s,a,\la;z)
=O\{(|t|+1)^{\max(0,\lf2-\si\rf)}|z|^N\}
\tag{3.11}
\end{align*}
as $z\to0$ through $|\arg z|\leq\pi-\et$, where the implied $O$-constant depends at most on $\si$, $a$, 
$\la$, $\al$, $\be$, $\ga$, $m$, $N$ and $\et$.
\end{theorem}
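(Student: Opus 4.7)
The plan is to combine Proposition~2 with the Mellin--Barnes machinery already used in the proofs of Theorems~1 and~3. First, Proposition~2 rewrites the iterated transform as
\begin{align*}
\vG{\be+\ga}{\al,\be}\int_0^{\infty}(\ph^{\ast})^{(m)}(s+z\ta,a,\la)\ta^{\al-1}e^{-\ta}U(\ga;\al-\be+1;\ta)d\ta,
\end{align*}
and convergence is secured in $|\arg z|<\pi$ by the factor $e^{-\ta}$ (combined with the mild polynomial growth of $(\ph^{\ast})^{(m)}$ along horizontal rays for $a>1$). Into this integrand I would substitute the Mellin--Barnes representation of $(\ph^{\ast})^{(m)}(s+z\ta,a,\la)$ in terms of $\ph^{\ast}_{-w-m}(s,a,\la)$ already prepared in Section~4 (the same representation that powers Theorems~1 and~3), taking the initial vertical contour $\re w=c$ with $-\min(\re\al,\re\be)<c<0$ so that neither the poles of $\vGa(-w)$ at $w=0,1,\ldots$ nor those of $\vGa(\al+w),\vGa(\be+w)$ lie on the contour.

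After justifying the interchange of the $\ta$-integral and the $w$-integral via absolute convergence in the joint strip, the inner $\ta$-integral reduces to the classical Mellin transform evaluation
\begin{align*}
\int_0^{\infty}\ta^{\al+w-1}e^{-\ta}U(\ga;\al-\be+1;\ta)d\ta=\fr{\vGa(\al+w)\vGa(\be+w)}{\vGa(\be+\ga+w)},
\end{align*}
which follows immediately by unfolding~(3.8) and applying Fubini. This identifies the transform with a Mellin--Barnes integral whose integrand is, schematically,
\begin{align*}
\vGa(-w)\,\fr{\vGa(\al+w)\vGa(\be+w)}{\vGa(\be+\ga+w)}\,\ph^{\ast}_{-w-m}(s,a,\la)\,z^w,
\end{align*}
and this is precisely~(7.15). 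Shifting the contour rightward across the simple poles of $\vGa(-w)$ at $w=0,1,\ldots,N-1$, each residue contributes, up to the overall prefactor $(-1)^m\vG{\be+\ga}{\al,\be}$, the term $(-1)^n(\al)_n(\be)_n/[(\be+\ga)_n\,n!]\cdot\ph^{\ast}_{-n-m}(s,a,\la)z^n$, reproducing the main sum in~(3.10); the integral along the shifted line is then identified as the remainder $R^{4,+}_{m,N}(s,a,\la;z)$.

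The main obstacle will be the estimate~(3.11). By Stirling, the combined gamma factor $\vGa(-w)\vGa(\al+w)\vGa(\be+w)/\vGa(\be+\ga+w)$ decays on vertical lines like $|\im w|^{A}e^{-\pi|\im w|}$ for some $A\in\R$ depending on $\re w,\al,\be,\ga$, while $|z^w|=|z|^{\re w}\exp(-\arg z\cdot\im w)$. The vertical estimate~(4.12) supplies a bound of the form $(|t|+1)^{\max(0,\lf 2-\si\rf)}$ times a polynomial in $|\im w|$ for $\ph^{\ast}_{-w-m}(s,a,\la)$. Their product leaves residual exponential decay $e^{-(\pi-|\arg z|)|\im w|}$, which yields absolute convergence of the remainder integral and the announced bound precisely in the sector $|\arg z|\leq\pi-\et$. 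The delicate step is to track the polynomial growth in $|\im w|$ carefully enough to absorb it against this exponential decay uniformly for $\re w$ between the initial and shifted contours, while keeping the $|t|$-dependence separated so that only the stated factor $(|t|+1)^{\max(0,\lf 2-\si\rf)}$ (and no $|\im w|$-dependent amplification of it) survives in the final bound.
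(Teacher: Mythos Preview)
Your proposal is correct and arrives at the same Mellin--Barnes representation (7.13) that the paper uses, and from there the contour-shifting and remainder estimation proceed identically (using (4.12), Stirling, and Lemma~5). The only organizational difference is that the paper does not substitute a Mellin--Barnes representation for $(\ph^{\ast})^{(m)}(s+z\ta,a,\la)$ directly; instead it applies the iterated transform term-by-term to the series/integral in (4.14), identifies each resulting kernel as the closed form ${}_2F_1(\al,\be;\be+\ga;-z\log(a+l))$ (formula (7.10), proved in Lemma~10), and then invokes the standard Mellin--Barnes formula (7.11) for ${}_2F_1$. Your route---using Proposition~2 and evaluating $\int_0^\infty \ta^{\al+w-1}e^{-\ta}U(\ga;\al-\be+1;\ta)\,d\ta$ directly---bypasses the ${}_2F_1$ identification but is equivalent: unfolding (3.8) and carrying out the $\ta$- and $\xi$-integrals is precisely the computation inside the paper's proof of Lemma~10. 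One small inaccuracy: the ``Mellin--Barnes representation of $(\ph^{\ast})^{(m)}$ already prepared in Section~4'' is not actually stated there; you would need to derive it yourself by inserting (7.2) term-wise into (4.14), which is straightforward but worth making explicit.
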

\begin{theorem}
Upon the same settings as in Theorem~7, for any integers $N_j$ $(j=1,2)$ with 
$N_1\geq\lf\re(\be+\ga-\al)\rf$ and $N_2\geq\lf\re\ga\rf$, except the case $\al-\be\in\mathbb{Z}$ we have 
\begin{align*}
&\RL_{z;\ta_2}^{\be,\ga}\LM_{\ta_2;\ta_1}^{\al}
(\ph^{\ast})^{(m)}(s+\ta,a,\la)
\tag{3.12}\\
&\quad=
(-1)^m\vG{\be+\ga,\be-\al}{\be,\be+\ga-\al}
\Biggl\{\sum_{n=0}^{N_1-1}\fr{(-1)^{n}(\al)_n(1-\be-\ga+\al)_n}{(1-\be+\al)_nn!}\\
&\qquad\times\ph^{\ast}_{\al+n-m}(s,a,\la)z^{-\al-n}
+R_{1,m,N_1}^{4,-}(s,a,\la;z)\Biggr\}\\
&\qquad
+(-1)^m\vG{\be+\ga,\al-\be}{\al,\ga}
\Biggl\{\sum_{n=0}^{N_2-1}\fr{(-1)^{n}(\be)_n(1-\ga)_n}{(1-\al+\be)_nn!}\\
&\qquad\times\ph^{\ast}_{\be+n-m}(s,a,\la)z^{-\be-n}
+R^{4,-}_{2,m,N_2}(s,a,\la;z)\Biggr\}
\end{align*} 
in the sector $|\arg z|<\pi$. Here the reminders $R^{4,-}_{j,m,N_j}(s,a,\la;z)$ $(j=1,2)$ are expressed by the 
Mellin-Barnes type integrals in (7.22) below, and satisfy the estimates
\begin{align*}
\begin{split}
R^{4,-}_{1,m,N_1}(s,a,\la;z)
&=O\{(|t|+1)^{\max(0,\lf2-\si\rf)}|z|^{-\re\al-N_1}\},
\\
R^{4,-}_{2,m,N_2}(s,a,\la;z)
&=O\{(|t|+1)^{\max(0,\lf2-\si\rf)}|z|^{-\re\be-N_2}\}
\end{split}\tag{3.13}
\end{align*}
as $z\to\infty$ through $|\arg z|\leq\pi-\et$ with any small $\et>0$, where the implied 
$O$-constants depend at most on $\si$, $a$, $\la$, $\al$, $\be$, $\ga$, $m$, $N$ and $\et$. 
\end{theorem}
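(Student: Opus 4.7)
The strategy follows the Mellin--Barnes blueprint of Theorems~1--6, applied to the iterated kernel of (3.9). I would first combine the integral representation of Proposition~2 with a Mellin--Barnes expansion of $(\ph^{\ast})^{(m)}(s+z\ta,a,\la)$ obtained from (2.1)--(2.2), then interchange orders of integration and evaluate the inner $\ta$-integral in closed form. The closed form stems from the Mellin transform of the composite kernel $\ta^{\al-1}e^{-\ta}U(\ga;\al-\be+1;\ta)$: inserting (3.8) and carrying out the $\ta$- and auxiliary $\xi$-integrations yields $\vGa(\al+w)\vGa(\be+w)/\vGa(\be+\ga+w)$ at the dual variable $w$. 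The resulting exact Mellin--Barnes representation (to appear as (7.19)) carries the gamma ratio
\begin{equation*}
\vG{\be+\ga}{\al,\be}\,\fr{\vGa(\al+w)\vGa(\be+w)}{\vGa(\be+\ga+w)}
\end{equation*}
multiplied by $\ph^{\ast}_{-w-m}(s,a,\la)\,z^{w}$ together with the gamma factor from the Hadamard contour in (2.1).

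To obtain the $z\to\infty$ expansion, I would translate the contour leftward, crossing the poles at $w=-\al-n$ coming from $\vGa(\al+w)$ for $n=0,1,\ldots,N_1-1$ and at $w=-\be-n$ coming from $\vGa(\be+w)$ for $n=0,1,\ldots,N_2-1$. The hypothesis $\al-\be\notin\mathbb{Z}$ keeps the two sequences disjoint, so every pole is simple; coincidence would generate double poles producing logarithmic contributions, which is exactly why the case $\al-\be\in\mathbb{Z}$ is excluded. At $w=-\al-n$, applying the reflection formula $\vGa(x)\vGa(1-x)=\pi/\sin\pi x$ to reduce $\vGa(\be+w)$ at $\be-\al-n$, combined with standard manipulations of the other gamma factors at shifted arguments, reassembles the residue into the first series of (3.12) with prefactor $\vG{\be+\ga,\be-\al}{\be,\be+\ga-\al}$; the parallel computation at $w=-\be-n$ yields the second series with prefactor $\vG{\be+\ga,\al-\be}{\al,\ga}$. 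What remains after extracting these residues are precisely $R^{4,-}_{1,m,N_1}$ and $R^{4,-}_{2,m,N_2}$, to be displayed in (7.22).

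The estimates (3.13) then follow on the shifted vertical contours by applying Stirling's formula to the gamma ratio (which gives exponential decay in $|\im w|$), the vertical estimate (4.12) to $\ph^{\ast}_{-w-m}(s,a,\la)$ (which produces the factor $(|t|+1)^{\max(0,\lf 2-\si\rf)}$), and noting that $|z^{w}|=|z|^{\re w}\exp(-\im w\cdot\arg z)$ contributes $|z|^{-\re\al-N_1}$ or $|z|^{-\re\be-N_2}$ from the position of the translated contour, while the $\im w\cdot\arg z$ growth is absorbed by the Stirling decay throughout the sector $|\arg z|\leq\pi-\et$.

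\textbf{Main obstacle.} The principal technical task is the gamma-factor bookkeeping: at each residue one must iterate the reflection formula together with $\vGa(x+n)=(x)_n\vGa(x)$ so that the coefficients collapse into precisely the Pochhammer ratios $(-1)^n(\al)_n(1-\be-\ga+\al)_n/\{(1-\be+\al)_nn!\}$ and $(-1)^n(\be)_n(1-\ga)_n/\{(1-\al+\be)_nn!\}$ with the announced prefactors. A secondary concern is verifying that the full sector $|\arg z|\leq\pi-\et$ is accessible in one piece, in contrast to the Stokes-type splitting of Theorem~4; this reflects the mollifying effect of the Laplace factor $e^{-\ta}$ introduced by the $\LM$ iteration, which renders the composite kernel entire in a wider sector than the bare Riemann--Liouville kernel of Theorem~4.
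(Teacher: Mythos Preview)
Your approach—shifting the single Mellin--Barnes contour of Lemma~11, i.e.\ (7.13), to the left past the two interlaced pole sequences at $w=-\al-n$ and $w=-\be-n$—is exactly the method the paper uses for Theorem~6, and it does yield the correct asymptotic series with the correct prefactors (your residue bookkeeping is right). However, it does not produce Theorem~8 in the form stated, and the paper takes a different route here.

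The difficulty is the structure of the remainder. Moving one vertical line leftward leaves one remainder integral with the integrand of (7.13), and forces $N_1$ and $N_2$ to be tied together: a single straight contour cannot be placed so as to have crossed exactly $N_1$ poles of the $\al$-sequence and an independently chosen number $N_2$ of poles of the $\be$-sequence. The theorem, by contrast, allows $N_1,N_2$ to be chosen independently (subject only to the stated lower bounds) and exhibits two separate remainders $R^{4,-}_{j,m,N_j}$, each with its own integrand; the expressions in (7.22) are \emph{not} the integrand of (7.13). Your sentence ``what remains after extracting these residues are precisely $R^{4,-}_{1,m,N_1}$ and $R^{4,-}_{2,m,N_2}$'' is therefore the step that fails.

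The paper obtains the two independent pieces by first applying the hypergeometric connection formula (7.16) to the closed form ${}_2F_1(\al,\be;\be+\ga;-Z)$ inside (7.12), which splits it into a $z^{-\al}$-branch and a $z^{-\be}$-branch \emph{before} any contour shift. Each branch then receives its own Mellin--Barnes representation (the integrals $J_1,J_2$ of Lemma~12, displayed in (7.19)), and those paths are moved to the \emph{right} over the poles at $w=n$, giving the two independent remainders (7.22) on separate lines $(u_{1,N_1})$ and $(u_{2,N_2})$. Incidentally, the single-integral formula you expected to appear as (7.19) is actually (7.13); (7.19) already carries the two-branch structure.

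In short, your argument would prove a Theorem~6--style variant (single remainder, coupled truncation levels); to obtain Theorem~8 as stated you need the connection-formula splitting before the contour work.
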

The case $(s,z)=(\si,it)$ with $\si,t\in\mathbb{R}$ of Theorem~8 asserts the following 
asymptotic expansion along vertical lines.  
\begin{corollary}
Upon the same settings as in Theorem~8 wtih any real $\si$, we have 
\begin{align*}
&\RL_{t;\ta_2}^{\be,\ga}\LM_{\ta_2;\ta_1}^{\al}
(\ph^{\ast})^{(m)}(\si+i\ta,a,\la)
\tag{3.14}\\
&\quad=
(-1)^m\vG{\be+\ga,\be-\al}{\be,\be+\ga-\al}
\Biggl\{\sum_{n=0}^{N_1-1}\fr{(-1)^{n}(\al)_n(1-\be-\ga+\al)_n}{(1-\be+\al)_nn!}\\
&\qquad\times\ph^{\ast}_{\al+n-m}(s,a,\la)(e^{(\sgn t)\pi i/2}|t|)^{-\al-n}
+O(|t|^{-\re\al-N_1})\Biggr\}\\
&\qquad
+(-1)^m\vG{\be+\ga,\al-\be}{\al,\ga}
\Biggl\{\sum_{n=0}^{N_2-1}\fr{(-1)^{n}(\be)_n(1-\ga)_n}{(1-\al+\be)_nn!}\\
&\qquad\times\ph^{\ast}_{\be+n-m}(s,a,\la)(e^{(\sgn t)\pi i/2}|t|)^{-\be-n}
+O(|t|^{-\re\be-N_2})\Biggr\}
\end{align*} 
as $t\to\pm\infty$, where the implied $O$-constants depend at most on $\si$, $a$, $\la$, $\al$, $\be$, $\ga$, $m$, $N$ and $\et$. 
\end{corollary}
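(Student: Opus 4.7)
The plan is to deduce this corollary as the direct specialisation $(s,z)=(\si,it)$ of Theorem~8, combined with a small cosmetic adjustment of the remainder estimates. For $t\neq0$ the parameter $z=it$ satisfies $|\arg z|=\pi/2$, which lies in any closed subsector $|\arg z|\leq\pi-\et$ with $\et\leq\pi/2$, so the hypotheses of Theorem~8 are in force. Writing $z$ in the principal branch as $z=e^{(\sgn t)\pi i/2}|t|$ gives $|z|=|t|$ and, for every $n$, the identities $z^{-\al-n}=(e^{(\sgn t)\pi i/2}|t|)^{-\al-n}$ and $z^{-\be-n}=(e^{(\sgn t)\pi i/2}|t|)^{-\be-n}$. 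Inserting these into the two main sums of (3.12) reproduces verbatim the leading sums in (3.14).

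The remaining task is to pass from the remainder estimates in (3.13), which after substitution read
\begin{align*}
R^{4,-}_{1,m,N_1}(\si,a,\la;it)&=O\{(|t|+1)^{c}\,|t|^{-\re\al-N_1}\},\\
R^{4,-}_{2,m,N_2}(\si,a,\la;it)&=O\{(|t|+1)^{c}\,|t|^{-\re\be-N_2}\}
\end{align*}
with $c=\max(0,\lf 2-\si\rf)$, to the sharper-looking bounds $O(|t|^{-\re\al-N_1})$ and $O(|t|^{-\re\be-N_2})$ asserted by the corollary. I would do this by reapplying Theorem~8 with the index shifts $N_1\mapsto N_1+c$ and $N_2\mapsto N_2+c$. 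The new remainders are then $O((|t|+1)^{c}|t|^{-\re\al-N_1-c})=O(|t|^{-\re\al-N_1})$ (and similarly for the other) as $|t|\to\infty$, since $(|t|+1)^c/|t|^c$ is bounded for large $|t|$. The extra main terms generated by the shift, corresponding to the ranges $n=N_j,\ldots,N_j+c-1$, are each of magnitude $|t|^{-\re\al-n}$ or $|t|^{-\re\be-n}$ and so are all dominated by the target error; they can therefore be absorbed into it.

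There is essentially no analytic obstacle in this derivation, and no new input beyond Theorem~8 itself is required. The mildest point to verify is the $(|t|+1)^c$ absorption described above, handled by the routine device of taking more terms in the expansion than one wishes to retain. All dependencies of the implied $O$-constants are inherited from Theorem~8, so the constants in (3.14) depend at most on the parameters already listed there.
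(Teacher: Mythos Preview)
Your approach --- specialising Theorem~8 at $(s,z)=(\si,it)$ --- is exactly what the paper intends; the corollary is stated there without proof as an immediate consequence. However, you have misread the error bounds (3.13). In the paper's standing notation $s=\si+it$, the factor $(|t|+1)^{\max(0,\lf2-\si\rf)}$ involves $t=\im s$, not the pivotal parameter $z$. When you set $s=\si\in\mathbb{R}$, this imaginary part is $0$, so the factor collapses to $1$ and the bounds (3.13) become directly $O(|z|^{-\re\al-N_1})=O(|t|^{-\re\al-N_1})$ and $O(|z|^{-\re\be-N_2})=O(|t|^{-\re\be-N_2})$, with the symbol $t$ now recycled for the new real variable parametrising $z=it$. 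Your ``remaining task'' therefore does not exist, and the index-shifting absorption argument, while harmless, is unnecessary. Once this is corrected, the derivation is a one-line substitution and matches the paper's implicit argument.
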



We next proceed to state the results on the iteration of two Riemann-Liouville 
transforms. For this, let $\tF{\ka,\mu}{\nu}{Z}$ be Gau{\ss}' hypergeometric 
function defined by    
\begin{align*}
\F{\ka,\mu}{\nu}{Z}
=\sum_{n=0}^{\infty}\fr{(\ka)_n(\mu)_n}{(\nu)_nn!}Z^n
\qquad(|Z|<1)\tag{3.15}
\end{align*}
for any complex $\ka$, $\mu$ and $\nu$ with  $\nu\notin\mathbb{Z}_{\leq0}$ 
(cf.~\cite[p.56,~2.1.1(2)]{erdelyi1953a}), where the domain of $Z$ is extended to 
$|\arg(1-Z)|<\pi$ by Euler's integral formula (3.17) below. 
We can then show the following expression. 
\begin{proposition}
For any $f(\ta)$ holomorphic in $|\arg\ta|<\pi$, we have
\begin{align*}
\RL_{z;\ta_2}^{\ga,\de}\RL_{\ta_2;\ta_1}^{\al,\be}f(\ta_1)
&=\vG{\al+\be,\ga+\de}{\al,\ga,\be+\de}
\int_0^{\infty}f(z\ta)\ta^{\al-1}(1-\ta)_+^{\be+\de-1}
\tag{3.16}\\
&\quad\times\F{\al+\be-\ga,\de}{\be+\de}{1-\ta}d\ta,
\end{align*} 
provided that the integral converges, for which it suffuces to choose a class of 
functions such that $f(z)=O(1)$ as $z\to0$ through $|\arg z|<\pi$.
\begin{proof}
The left side of (3.16) equals
\begin{align*}
&\vG{\al+\be,\ga+\de}{\al,\be,\ga,\de}
\int_0^{\infty}\int_0^{\infty}f(z\ta_1\ta_2)\ta_2^{\ga-1}(1-\ta_2)_+^{\de-1}
\ta_1^{\al-1}(1-\ta_1)_+^{\be-1}d\ta_1d\ta_2\\
&\quad
=\vG{\al+\be,\ga+\de}{\al,\be,\ga,\de}
\int_0^{\infty}f(z\ta)\ta^{\al-1}
\int_0^{\infty}\ta_2^{\ga-\al-\be}(1-\ta_2)_+^{\de-1}(\ta_2-\ta)_+^{\be-1}d\ta_2d\ta, 
\end{align*}
by setting $\ta_1=\ta/\ta_2$ and then by interchanging the order of the integrals 
on the first line; the resulting inner $\ta_2$-integral further becomes
\begin{align*}
&\int_{\ta}^1\ta_2^{\ga-\al-\be}(1-\ta_2)_+^{\de-1}(\ta_2-\ta)_+^{\be-1}d\ta_2\\
&\quad=\int_0^1\{1-(1-\ta)\xi\}^{\ga-\al-\be}\{(1-\ta)\xi\}_+^{\de-1}
\{(1-\ta)(1-\xi)\}_+^{\be-1}(1-\ta)d\xi\\
&\quad=(1-\ta)_+^{\be+\de-1}\vG{\be,\de}{\be+\de}
\F{\al+\be-\ga,\de}{\be+\de}{1-\ta},   
\end{align*} 
where the substitution of the variable $\ta_2=1-(1-\ta)\xi$ is made on the first line, while 
the integral on the penultimate line is evaluated by the formula
\begin{align*}
\F{\ka,\mu}{\nu}{Z}
=\vG{\nu}{\mu,\nu-\mu}\int_0^1\xi^{\mu-1}(1-\xi)^{\nu-\mu-1}(1-Z\xi)^{-\ka}d\xi
\tag{3.17}
\end{align*}
in $|\arg(1-Z)|<\pi$ and for any complex $\ka$, $\mu$ and $\nu$ with $\re\nu>\re\mu>0$ (cf.~\cite[p.59, 2.1.3(10)]{erdelyi1953a}); this concludes (3.16). Here the sufficient condition for the 
convergence of the integral in (3.16) can be seen from the fact 
that the hypergeometric function in (3.16) is of order $\asymp 1+\ta^{\re(\ga-\al)}$ $(\ta\to0^+)$ and 
$\asymp 1$ $(\ta\to1^{-})$ (cf.~(3.15) and \cite[p.108, 2.10(1)]{erdelyi1953a}).
\end{proof}
\end{proposition}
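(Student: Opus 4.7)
The plan is to expand the left-hand side by writing both Riemann–Liouville operators as integrals, then to reduce the resulting double integral to a single integral by a multiplicative substitution in the inner variable and an interchange of the order of integration. The final step is to recognize the remaining inner integral as Euler's integral representation of Gauss's hypergeometric function.

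First I would unfold the definitions: by (1.3) applied twice,
\begin{align*}
\RL_{z;\ta_2}^{\ga,\de}\RL_{\ta_2;\ta_1}^{\al,\be}f(\ta_1)
=\vG{\al+\be,\ga+\de}{\al,\be,\ga,\de}
\int_0^{\infty}\!\!\int_0^{\infty}
f(z\ta_1\ta_2)\ta_2^{\ga-1}(1-\ta_2)_+^{\de-1}\ta_1^{\al-1}(1-\ta_1)_+^{\be-1}d\ta_1d\ta_2.
\end{align*}
The cut-off factors force $0<\ta_1\leq 1$ and $0<\ta_2\leq 1$, so the domain is compact apart from the possible endpoint singularities contributed by $\ta_1^{\al-1}$, $\ta_2^{\ga-1}$, $(1-\ta_1)_+^{\be-1}$ and $(1-\ta_2)_+^{\de-1}$; these are integrable in view of $\re\al,\re\be,\re\ga,\re\de>0$, and under the hypothesis $f=O(1)$ the integrand is absolutely integrable, so Fubini's theorem applies.

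Next I would perform the substitution $\ta_1=\ta/\ta_2$ at fixed $\ta_2>0$. Using $(1-\ta/\ta_2)_+^{\be-1}=\ta_2^{-(\be-1)}(\ta_2-\ta)_+^{\be-1}$ together with the Jacobian $d\ta_1=d\ta/\ta_2$, the integrand becomes $f(z\ta)\ta^{\al-1}\ta_2^{\ga-\al-\be}(\ta_2-\ta)_+^{\be-1}(1-\ta_2)_+^{\de-1}$ (times the constant $\ta_2$-powers). After swapping the order of integration, the inner $\ta_2$-integral has effective range $[\ta,1]$ for $0<\ta<1$ (and vanishes for $\ta\geq1$), giving
\begin{align*}
\int_{\ta}^{1}\ta_2^{\ga-\al-\be}(1-\ta_2)^{\de-1}(\ta_2-\ta)^{\be-1}d\ta_2.
\end{align*}

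To identify this with a Gauss hypergeometric value I would substitute $\ta_2=1-(1-\ta)\xi$, which maps $\ta_2\in[\ta,1]$ to $\xi\in[0,1]$ and yields $1-\ta_2=(1-\ta)\xi$, $\ta_2-\ta=(1-\ta)(1-\xi)$, and $d\ta_2=-(1-\ta)d\xi$. Collecting powers produces the factor $(1-\ta)_+^{\be+\de-1}$ times
\begin{align*}
\int_0^{1}\xi^{\de-1}(1-\xi)^{\be-1}\{1-(1-\ta)\xi\}^{\ga-\al-\be}d\xi,
\end{align*}
which matches Euler's formula (3.17) with the identifications $\mu=\de$, $\nu=\be+\de$, $\ka=\al+\be-\ga$, $Z=1-\ta$, and is therefore equal to $\vG{\be,\de}{\be+\de}\F{\al+\be-\ga,\de}{\be+\de}{1-\ta}$. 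Multiplying this by the prefactor $\vG{\al+\be,\ga+\de}{\al,\be,\ga,\de}$ and simplifying the gamma quotient produces exactly $\vG{\al+\be,\ga+\de}{\al,\ga,\be+\de}$, which yields (3.16).

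The main delicate point I anticipate is the interchange of order of integration: I would confirm it by bounding $|f(z\ta_1\ta_2)|$ uniformly on the compact region $\ta_1\ta_2\leq 1$ using the stated hypothesis $f=O(1)$ near the origin, thereby making the double integral absolutely convergent. The substitutions themselves and the recognition of the $\ta_2$-integral as Euler's beta/hypergeometric integral are routine once absolute convergence and the sign/support of the truncation factors are handled carefully.
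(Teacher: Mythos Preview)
Your proof is correct and follows essentially the same route as the paper's: expand the double integral, substitute $\ta_1=\ta/\ta_2$, interchange the order, then apply the change of variable $\ta_2=1-(1-\ta)\xi$ and identify the resulting $\xi$-integral with Euler's formula (3.17). The only minor addition is your explicit Fubini justification, which the paper leaves implicit.
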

The following Theorems~9~and~10 assert the asymptotic expansions as $z\to0$ and 
$z\to\infty$ respectively. 
\begin{theorem}
Upon the same settings as in Theorem~7 with any complex $\de$ of positive real part, we have 
\begin{align*}
&\RL_{z;\ta_2}^{\ga,\de}\RL_{\ta_2;\ta_1}^{\al,\be}
(\ph^{\ast})^{(m)}(s+\ta_1,a,\la)
\tag{3.18}\\
&\quad=(-1)^m\sum_{n=0}^{N-1}
\fr{(-1)^{n}(\al)_n(\ga)_n}{(\al+\be)_n(\ga+\de)_nn!}
\ph^{\ast}_{-n-m}(s,a,\la)z^n
+R^{5,+}_{m,N}(s,a,\la;z)
\end{align*}
in the sector $|\arg z|<\pi/2$. Here the reminder $R^{5,+}_{m,N}(s,a,\la;z)$ is expressed by the Mellin-Barnes type 
inetgral in (7.31) below, and satisfies the estimate
\begin{align*}
R^{5,+}_{m,N}(s,a,\la;z)
=O\{(|t|+1)^{\max(0,\lf2-\si\rf)}|z|^N\}
\tag{3.19}
\end{align*} 
as $z\to0$ through $|\arg z|\leq\pi/2-\et$ with any small $\et>0$, where the implied $O$-constant 
depends at most on $\si$, $a$, $\la$, $\al$, $\be$, $\ga$, $\de$, $m$, $N$ and $\et$.  
\end{theorem}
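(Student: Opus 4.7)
The plan is to parallel the proof strategy of Theorem~7, substituting Proposition~3 for Proposition~2 to handle the iteration of two Riemann--Liouville transforms. First I will apply Proposition~3 to $f(\ta_1)=(\ph^{\ast})^{(m)}(s+\ta_1,a,\la)$, which recasts the left side of (3.18) as a single integral over $\ta\in[0,1]$ with kernel $\ta^{\al-1}(1-\ta)^{\be+\de-1}\F{\al+\be-\ga,\de}{\be+\de}{1-\ta}$ multiplying $(\ph^{\ast})^{(m)}(s+z\ta,a,\la)$. Into this expression I will insert the Mellin-Barnes representation of $(\ph^{\ast})^{(m)}(s+z\ta,a,\la)$ built on the auxiliary function $\ph^{\ast}_{r}(s,a,\la)$ from (2.2)---the same device underpinning the Mellin-Barnes integrals (6.4), (6.11), (7.3), (7.13), (7.15) employed in the preceding proofs---and then interchange the orders of integration, justified by absolute convergence on an appropriately placed vertical contour together with the compact support in~$\ta$.

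The crux of the argument is the closed-form evaluation of the resulting inner $\ta$-integral
\[
\int_0^1 \ta^{\al-r-1}(1-\ta)^{\be+\de-1}\F{\al+\be-\ga,\de}{\be+\de}{1-\ta}d\ta.
\]
Expanding the hypergeometric factor via (3.15) and integrating termwise against the beta kernel yields a ${}_3F_2$ at unit argument; since one of its upper parameters coincides with the lower parameter $\be+\de$, it collapses to $\F{\al+\be-\ga,\de}{\al+\be+\de-r}{1}$, to which Gauss's summation (applicable when $\re r$ is chosen sufficiently negative that $\re(\ga-r)>0$) evaluates to $\vG{\al+\be+\de-r,\ga-r}{\al+\be-r,\ga+\de-r}$. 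Combined with the normalizing constant $\vG{\al+\be,\ga+\de}{\al,\ga,\be+\de}$ of (3.16), this produces the overall Mellin multiplier $\vG{\al+\be,\ga+\de,\al-r,\ga-r}{\al,\ga,\al+\be-r,\ga+\de-r}$ acting on $z^{-r}$---precisely the product of the two single-step Riemann--Liouville multipliers, which transparently accounts for the extra Pochhammer ratio $(\ga)_n/(\ga+\de)_n$ in (3.18) as compared with (2.8). The resulting contour integral will then be recorded as (7.31).

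To conclude, I will shift this contour rightward to $\re r=N-\et$, crossing the simple poles at $r=0,1,\ldots,N-1$ supplied by the $\vGa(-r)$-type factor in the Mellin-Barnes kernel; each residue simplifies, via the standard conversion of Gamma quotients into Pochhammer symbols, to $(-1)^m(-1)^n(\al)_n(\ga)_n\ph^{\ast}_{-n-m}(s,a,\la)z^n/[(\al+\be)_n(\ga+\de)_nn!]$, namely the summand of (3.18), while the integral along the shifted contour becomes the remainder $R^{5,+}_{m,N}(s,a,\la;z)$. The estimate (3.19) will then follow from Stirling's formula applied to the Gamma quotients (yielding the decay $\asymp e^{-\pi|\im r|/2}$) together with the vertical bound (4.12) for $\ph^{\ast}_{-r-m}(s,a,\la)$ (contributing the $(|t|+1)^{\max(0,\lf2-\si\rf)}$ factor); the sector restriction $|\arg z|\leq\pi/2-\et$ is precisely what allows the oscillatory factor $|z^{-r}|=|z|^{-\re r}e^{(\im r)\arg z}$ to be absorbed by the Gamma decay on the vertical contour, while the contour location $\re r=N-\et$ yields the required order $|z|^N$. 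I expect the main obstacle to be the closed-form evaluation of the inner $\ta$-integral in the second step: spotting the parameter coincidence that triggers the ${}_3F_2 \to {}_2F_1 \to$ Gauss cascade is what reveals the clean multiplier factorization and hence the Pochhammer-product structure of the main-term coefficients.
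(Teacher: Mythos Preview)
Your plan is correct and follows the same architecture as the paper: establish the Mellin--Barnes representation (7.29) for the iterated Riemann--Liouville transform, then shift the contour to the right past the poles at $w=0,1,\ldots,N-1$ of $\vGa(-w)$ to produce the main sum and the remainder (7.31), and finally bound the remainder using Stirling together with the vertical estimate (4.12). The one substantive difference lies in how you evaluate the inner $\ta$-integral (the paper's $G(\al,\be,\ga,\de;w)$ in (7.26)--(7.28)): the paper applies Kummer's transformation to the ${}_2F_1$, inserts a second Mellin--Barnes representation (7.11), interchanges, and closes with Barnes' first lemma; you instead expand ${}_2F_1(\al+\be-\ga,\de;\be+\de;1-\ta)$ termwise, integrate against the Beta kernel, observe the cancellation $(\be+\de)_k/(\be+\de)_k$ that collapses the resulting ${}_3F_2(1)$ to a ${}_2F_1(1)$, and finish with Gauss's summation (valid precisely under the contour condition $\re(\ga+w)>0$, i.e.\ $u_0>-\re\ga$). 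Your route is more elementary and makes the factorization of the Mellin multiplier into two single-step Riemann--Liouville pieces transparent; the paper's route, by contrast, sets things up in a way that generalizes more readily when no such parameter coincidence is available.

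Two small points to tighten. First, there is a sign inconsistency in your integration variable: you write the inner integral with $\ta^{\al-r-1}$ and the Mellin multiplier with arguments $\al-r,\ga-r$, but then speak of poles of a $\vGa(-r)$-type factor at $r=0,1,\ldots$ giving residues proportional to $z^n$; these conventions are not mutually compatible, so fix one (the paper's $w$ with factor $z^w$ and $\vGa(-w)$ is cleanest). Second, placing the shifted contour at $\re r=N-\et$ yields only $O(|z|^{N-\et})$ as $z\to0$, not the claimed $O(|z|^N)$; to get the sharp exponent you must either shift once more past the pole at $N$ (as the paper does, writing $R^{5,+}_{m,N}$ as the $N$th term plus $R^{5,+}_{m,N+1}$ on a line with abscissa $>N$) or place the contour at $\re r=N+\et$ from the outset and absorb the extra residue.
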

\begin{theorem}
Upon the same settings as in Theorem~9 with $N'=N-\lf\re(\be-\al)\rf$, except the case 
$\al-\ga\in\mathbb{Z}$ we have   
\begin{align*}
&\RL_{s;\ta_2}^{\ga,\de}\RL_{\ta_2;\ta_1}^{\al,\be}
(\ph^{\ast})^{(m)}(s+\ta_1,a,\la)
\tag{3.20}\\
&\quad=
(-1)^m\vG{\al+\be,\ga-\al,\ga+\de}{\be,\ga,\ga+\de-\al}\\
&\qquad\times\sum_{n=0}^{N-1}
\fr{(\al)_n(1-\be)_n(1+\al-\ga-\de)_n}{(1+\al-\ga)_nn!}
\ph^{\ast}_{\al+n-m}(s,a,\la)z^{-\al-n}\\
&\qquad+(-1)^m\vG{\al+\be,\al-\ga,\ga+\de}{\al,\al+\be-\ga,\ga+\de-\be}\\
&\qquad\times\sum_{n=0}^{N'-1}
\fr{(\be)_n(1-\al-\be+\ga)_n(1+\be-\ga-\de)_n}{(1-\al+\ga)_nn!}
\ph^{\ast}_{\be+n-m}(s,a,\la)z^{-\be-n}\\
&\qquad+R^{5,-}_{m,N}(s,a,\la;z)
\end{align*}
in the sector $|\arg z|<\pi/2$. Here the reminder $R^{5,-}_{m,N}(s,a,\la;z)$ is expressed by the 
Mellin-Barnes type integral in (7.32) below, and satisfies the estimate
\begin{align*}
R^{5,-}_{m,N}(s,a,\la;z)
=O\{(|t|+1)^{\max(0,\lf2-\si\rf)}|z|^{-\re \al-N}\}
\tag{3.21}
\end{align*} 
as $z\to\infty$ through $|\arg z|\leq\pi/2-\et$ with any small $\et>0$, where the implied $O$-constant 
depends at most on $\si$, $a$, $\la$, $\al$, $\be$, $\ga$, $\de$, $m$, $N$ and $\et$.    
\end{theorem}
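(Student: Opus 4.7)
The proof follows the template of Theorems~6 and~8 for large-$z$ asymptotics of iterated transforms: one combines Proposition~3's integral representation with a Mellin-Barnes expansion of $(\ph^{\ast})^{(m)}(s+z\ta,a,\la)$ in terms of the auxiliary zeta-function $\ph^{\ast}_r(s,a,\la)$, and shifts the contour to collect residues. First I substitute $f=(\ph^{\ast})^{(m)}$ into (3.16), which expresses the iterated transform as an integral over $\ta\in[0,1]$ weighted by $\ta^{\al-1}(1-\ta)^{\be+\de-1}\F{\al+\be-\ga,\de}{\be+\de}{1-\ta}$.

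Next I insert the Mellin-Barnes representation for $(\ph^{\ast})^{(m)}(s+z\ta,a,\la)$ in terms of $\ph^{\ast}_r(s,a,\la)$, of the same shape as the representations yielding the integrals (7.3), (7.13), and (7.19) in the earlier proofs. After interchanging the $\ta$- and contour integrations---justified by the vertical bound (4.12) together with Stirling asymptotics on the gamma factors---the inner $\ta$-integral takes the form
\begin{equation*}
\int_0^1\ta^{\al+w-1}(1-\ta)^{\be+\de-1}\F{\al+\be-\ga,\,\de}{\be+\de}{1-\ta}\,d\ta,
\end{equation*}
which, via the substitution $\ta\mapsto 1-\ta$, Euler's integral (3.17), and Gauss's summation $\F{a,b}{c}{1}=\vG{c,\,c-a-b}{c-a,\,c-b}$, evaluates to an explicit gamma quotient in $w$. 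This yields the Mellin-Barnes representation (7.32) of the remainder.

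To extract the large-$z$ asymptotic, I deform the contour so as to collect residues from two disjoint sequences of simple poles of the resulting integrand. One sequence contributes the first series in (3.20), in descending powers $z^{-\al-n}$ with prefactor $\vG{\al+\be,\ga-\al,\ga+\de}{\be,\ga,\ga+\de-\al}$; the other contributes the series in $z^{-\be-n}$ with prefactor $\vG{\al+\be,\al-\ga,\ga+\de}{\al,\al+\be-\ga,\ga+\de-\be}$. The Pochhammer factors in the coefficients emerge upon simplifying the residues via $\vGa(s+n)/\vGa(s)=(s)_n$. The leftover contour integral is $R^{5,-}_{m,N}$; its bound (3.21) follows from (4.12), Stirling estimates on the gamma factors, and the sector condition $|\arg z|\leq\pi/2-\et$.

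The main technical obstacle will be the careful bookkeeping of the two residue sequences and the justification of the contour deformations, in particular verifying that the error estimate is uniform across the sector $|\arg z|\leq\pi/2-\et$. The exclusion $\al-\ga\in\mathbb{Z}$ is imposed precisely so that the two pole sequences remain disjoint with all poles simple; in the coalescing case, pairs of simple poles merge into double poles whose residues introduce logarithmic contributions (cf.~Remark~2), and the expansion must be rewritten as a confluent limit. The coupled truncation $N'=N-\lf\re(\be-\al)\rf$ is the balancing choice that forces both truncated series to leave remainders of the common order $|z|^{-\re\al-N}$ in the final estimate.
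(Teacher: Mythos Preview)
Your outline matches the paper's proof: derive the Mellin--Barnes representation (7.29) for the iterated transform, then shift the contour in (7.29) to the left and collect two sequences of residues, leaving the remainder (7.32) to be estimated via (4.12), Stirling, and Lemma~5. Your evaluation of the $\ta$-integral $G(\al,\be,\ga,\de;w)$ by termwise integration and Gauss's summation is actually cleaner than the paper's route, which passes through Kummer's transformation and Barnes' first lemma to reach the same closed form (7.28).

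There is, however, an internal inconsistency you should resolve. The gamma kernel in (7.29) (equivalently in (7.23) and (7.28)) is
\[
\vG{\al+w,\ \ga+w,\ \al+\be,\ \ga+\de,\ -w}{\al,\ \ga,\ \al+\be+w,\ \ga+\de+w},
\]
so the left-half-plane poles come from $\vGa(\al+w)$ and $\vGa(\ga+w)$, at $w=-\al-n$ and $w=-\ga-n$. This is exactly why the exclusion is $\al-\ga\in\mathbb{Z}$, as you correctly note. But then the second residue series yields powers $z^{-\ga-n}$ and values $\ph^{\ast}_{\ga+n-m}(s,a,\la)$, not $z^{-\be-n}$ and $\ph^{\ast}_{\be+n-m}$; correspondingly the balancing truncation is $N'=N-\lf\re(\ga-\al)\rf$ (this is what the paper's own proof uses), not $N-\lf\re(\be-\al)\rf$. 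The theorem statement as printed carries $\be$ in several places where $\ga$ is meant; your proposal reproduces that slip while simultaneously invoking the $\al-\ga$ separation, which is contradictory. When you carry out the residue calculation you will see $\ga$ emerge, and your claim about $N'$ balancing the two remainders to common order $|z|^{-\re\al-N}$ only holds with $N'=N-\lf\re(\ga-\al)\rf$.
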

\begin{remark}
Theorem~10 fails to imply the complete asymptotic expansion for 
\begin{align*}
\RL_{t;\ta_2}^{\ga,\de}\RL_{\ta_2;\ta_1}^{\al,\be}(\ph^{\ast})^{(m)}(\si+i\ta_1,a,\la)
\end{align*}
as $t\to\pm\infty$, since any vertical lines are not included in the region of its validity.
\end{remark}  
\begin{problem}
Deduce the complete asymptotic expansion for  
\begin{align*}
\RL_{s;\ta_2}^{\ga,\de}\RL_{\ta_2;\ta_1}^{\al,\be}(\ph^{\ast})^{(m)}(s+\ta_1,a,\la),
\end{align*}
which is valid in the full sector $|\arg z|<\pi$. 
\end{problem} 
It seems to be suggestive for the solution to introduce further the auxiliary zeta-function 
$\RL_{t;\ta}^{\be,\ga}\ph^{\ast}_{r}(s+\ta,a,\la)$, and to investigate its properties. 
\section{Preliminaries}
Throughout this section, we write $r=\rh+i\ta$ and $s=\si+it$ with real coordinates $\rh$, $\si$, $\ta$  
and $t$. We first show in this section the formulae (4.3) and (4.4), which apropriates to deduce the 
key estimate (4.12). 

Let $\fs_m^n(x)$ denote Stirling's polynomial of the first kind, defined for any integers $m,n\geq0$ by 
\begin{align*}
\fs_m^n(x)=\left.\fr{1}{m!}\Bigl(\fr{\del}{\del z}\Bigr)^n(1-z)^{-x}\{-\log(1-z)\}^m\right|_{z=0},
\tag{4.1}
\end{align*}
from which the fact $\fs_m^n(x)=0$ follows for any $m>n\geq0$. 
\begin{proposition}
For any $m,n\geq0$ we have 
\begin{align*}
(x+y)_n=\sum_{m=0}^{n}\fs_m^n(x)y^m.
\tag{4.2}
\end{align*} 
\end{proposition}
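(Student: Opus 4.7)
The natural approach is to exploit the generating function for the rising factorial. Recall that
$(1-z)^{-w}=\sum_{n=0}^{\infty}\fr{(w)_n}{n!}z^n$ in $|z|<1$ for any complex $w$.
The plan is to set $w=x+y$, split the exponent, and then read off coefficients of $z^n$ at $z=0$.

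First I would factor $(1-z)^{-(x+y)}=(1-z)^{-x}(1-z)^{-y}$ and expand the second factor by writing $(1-z)^{-y}=\exp\{-y\log(1-z)\}=\sum_{m=0}^{\infty}\fr{y^m}{m!}\{-\log(1-z)\}^m$, which is valid in $|z|<1$. Substituting this into the generating function yields
\begin{align*}
\sum_{n=0}^{\infty}\fr{(x+y)_n}{n!}z^n
=\sum_{m=0}^{\infty}\fr{y^m}{m!}(1-z)^{-x}\{-\log(1-z)\}^m.
\end{align*}
Next I would apply $(\del/\del z)^n|_{z=0}$ to both sides, interchanging the sum and derivative on the right (justified on any disk $|z|<\rh<1$ by uniform convergence). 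The left side yields $(x+y)_n$, while the right side yields, by the very definition (4.1),
\begin{align*}
\sum_{m=0}^{\infty}y^m\cdot\fr{1}{m!}\Bigl(\fr{\del}{\del z}\Bigr)^n(1-z)^{-x}\{-\log(1-z)\}^m\Big|_{z=0}
=\sum_{m=0}^{\infty}\fs_m^n(x)y^m.
\end{align*}

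Finally I would invoke the remark immediately after (4.1) that $\fs_m^n(x)=0$ whenever $m>n$; this is transparent since $\{-\log(1-z)\}^m=z^m\{1+O(z)\}$ near $z=0$, so the Taylor series of $(1-z)^{-x}\{-\log(1-z)\}^m$ begins at order $z^m$, and the $n$th derivative at $z=0$ vanishes for $m>n$. This truncates the outer series to $0\leq m\leq n$ and produces the claimed identity (4.2).

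There is no real obstacle here: the only technical point worth flagging is the legitimacy of swapping the $m$-sum with the $n$th derivative at $z=0$, which is immediate because both sides define entire functions of $y$ whose Taylor coefficients in $y$ match term by term after term-by-term differentiation on any compact subdisk of $|z|<1$.
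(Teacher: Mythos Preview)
Your proof is correct and follows essentially the same approach as the paper: write $(x+y)_n=(\del/\del z)^n(1-z)^{-x-y}|_{z=0}$, expand $(1-z)^{-y}=\exp\{-y\log(1-z)\}$ as a Taylor series in $y$, and identify the resulting coefficients with $\fs_m^n(x)$ via (4.1). Your version is slightly more explicit about the truncation to $m\le n$ and the justification for swapping sum and derivative, but the argument is the same.
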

\begin{proof}
The left side of (4.2) equals, by the Taylor series expansion (in terms of $y$),  
\begin{align*}
\Bigl(\fr{\del}{\del z}\Bigr)^n(1-z)^{-x-y}\biggr|_{z=0}
&=\sum_{m=0}^{\infty}
\Bigl(\fr{\del}{\del z}\Bigr)^n(1-z)^{-x}\{-\log(1-z)\}^m\biggr|_{z=0}\fr{y^m}{m!},
\end{align*}
the right side of which gives that of (4.2) with the expresion in (4.1).  
\end{proof}
The relation (4.2) readily shows that $\deg_x\fs_m^n(x)=n-m$ for any $n\geq m\geq0$. 
\begin{lemma}
For any complex $r$ and $s$, for any real $a$ and $\la$ with $a>1$, and for any integer $N\geq1$, 
we have{\rm:}
\begin{itemize}
\item[i)] if $\la\notin\mathbb{Z}$,
\begin{align*}
&\ph^{\ast}_r(s,a,\la)\tag{4.3}\\
&\quad=\fr{a^{-s}\log^{-r} a}{1-e(\la)}+\fr{e(\la)}{1-e(\la)}
\Biggl\{\sum_{n=1}^{N-1}\fr{(-1)^n}{n!}
\sum_{m=0}^{n}\fs_m^n(s)(r)_m\ph^{\ast}_{r+m}(s+n,a,\la)\\
&\qquad+\fr{(-1)^{N-1}}{(N-1)!}\sum_{m=0}^{N}\fs_m^N(s)(r)_m
\int_0^1\ph^{\ast}_{r+m}(s+N,a+u,\la)(1-u)^Ndu\Biggr\};
\end{align*} 
\item[ii)] if $\la\in\mathbb{Z}$, 
\begin{align*}
\ze_{r}^{\ast}(s,a)
&=\int_0^1(a+u)^{-s-1}\log^{-r}(a+u)\cdot(1-u)du\tag{4.4}\\
&\quad+\sum_{n=1}^{N-1}
\fr{(-1)^{n+1}}{(n+1)!}\sum_{m=0}^{n}\fs_m^n(s)(r)_m
\ze^{\ast}_{r+m}(s+n,a)\\
&\quad+\fr{(-1)^{N+1}}{N!}\sum_{m=0}^{N-1}\fs_m^N(s)(r)_m
\int_0^1\ze^{\ast}_{r+m}(s+N,a+u)(1-u)^Ndu.
\end{align*}
\end{itemize}  
\end{lemma}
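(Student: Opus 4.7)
The proof rests on a differentiation formula for the elementary function $F_{r,s}(a):=a^{-s}\log^{-r}a$, namely
\[
\partial_a^n F_{r,s}(a) = (-1)^n \sum_{m=0}^n \fs_m^n(s)(r)_m F_{r+m,\,s+n}(a),
\]
which produces exactly the $\fs_m^n(s)(r)_m$ coefficient structure appearing in (4.3) and (4.4); this formula is also where I expect the principal difficulty to lie, since the clean identification of the combinatorial coefficient as the Stirling polynomial $\fs_m^n(s)$ of (4.1) requires Proposition~2 to dovetail with a Stirling-first-kind operator identity. My plan is to derive it by substituting $a=e^w$, so that $F_{r,s}(e^w)=e^{-sw}w^{-r}$ and $a\partial_a=\partial_w$; the identity $a^n\partial_a^n=\sum_k \sigma_{n,k}(a\partial_a)^k$ (with $\sigma_{n,k}$ the signed Stirling numbers of the first kind) then reduces the task to computing $(\partial_w)^k[e^{-sw}w^{-r}]$, which is a finite binomial sum. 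Matching the coefficient of $\log^{-r-m}a$ yields $\sum_k \sigma_{n,k}(-1)^k\binom{k}{m}s^{k-m}$, which I would identify with $(-1)^n\fs_m^n(s)$ by substituting $y^k=\sum_j\binom{k}{j}s^{k-j}(y-s)^j$ into $\sum_k \sigma_{n,k}(-1)^k y^k=(-1)^n(y)_n$ and invoking Proposition~2.

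For part (i), since $\la\notin\bbZ$ the Hadamard operator $\I{r}{s}$ commutes termwise with the Dirichlet series (absolutely convergent for $a>1$ and $\re s$ large, then extended by meromorphic continuation), yielding $\ph^\ast_r(s,a,\la)=\ph_r(s,a,\la)=\sum_{l\geq0}e(\la l)F_{r,s}(a+l)$. Isolating the $l=0$ summand gives the recursion $\ph_r(s,a,\la)=F_{r,s}(a)+e(\la)\ph_r(s,a+1,\la)$, and since $1-e(\la)\neq0$ this rearranges to
\[
\ph_r(s,a,\la) = \fr{F_{r,s}(a)}{1-e(\la)} + \fr{e(\la)}{1-e(\la)}\bigl[\ph_r(s,a+1,\la)-\ph_r(s,a,\la)\bigr].
\]
I would then Taylor-expand $u\mapsto\ph_r(s,a+u,\la)$ about $u=0$ to order $N-1$ with integral remainder, differentiating under the Dirichlet sum via the formula above; this produces exactly $(-1)^n\sum_m\fs_m^n(s)(r)_m\ph_{r+m}(s+n,a,\la)$ at each order $n$, yielding (4.3).

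For part (ii), since $e(\la)=1$ the analogous recursion collapses to $\ze_r(s,a)=F_{r,s}(a)+\ze_r(s,a+1)$ and cannot be solved for $\ze_r(s,a)$; the would-be main term $F_{r,s}(a)/(1-e(\la))$ must be replaced by an averaged integral. Starting from the direct evaluation $\I{r}{s}\ps(s,a)=\int_0^\infty F_{r,s}(a+v)\,dv$ (obtained by representing $1/(s-1)$ as a Laplace integral and collapsing the Hankel contour of (2.1)), one rewrites
\[
\ze^\ast_r(s,a) = \sum_{l=0}^\infty\Bigl[F_{r,s}(a+l) - \int_l^{l+1}F_{r,s}(a+v)\,dv\Bigr].
\]
Applying the elementary identity $f(a+l)-\int_0^1 f(a+l+u)\,du=-\int_0^1(1-u)f'(a+l+u)\,du$ to each summand with $f=F_{r,s}$, invoking the $n=1$ case of the derivative formula for $f'$, and summing in $l$ reduces $\ze^\ast_r(s,a)$ to a weighted integral of $s\ze_r(s+1,a+u)+r\ze_{r+1}(s+1,a+u)$ against $(1-u)$ on $[0,1]$. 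Converting the two $\ze$-values back to their starred versions via $\ze_r(s+1,a+u)=\ze^\ast_r(s+1,a+u)+\I{r}{s+1}\ps(s+1,a+u)$ and the analogous relation at $r+1$, and noting that the $\I{r}{s+1}\ps$ and $\I{r+1}{s+1}\ps$ contributions combine through the identity $sF_{r,s+1}+rF_{r+1,s+1}=-\partial_v F_{r,s}$ to produce the explicit $(a+u)$-integral that serves as the main term of (4.4), a final $N$-term Taylor expansion of each $\ze^\ast_{r+m'}(s+1,a+u)$ in $u$ with integral remainder completes (4.4).
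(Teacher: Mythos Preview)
Your approach is correct and reaches the same conclusion, but it inverts the paper's order of operations, and in Part~(ii) your final step hides a small combinatorial identity you have not stated.

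The paper first derives an identity for $\ph^{\ast}(s,a,\la)$ (resp.\ $\ze^{\ast}(s,a)$) itself by Taylor-expanding $\ph^{\ast}(s,a+1,\la)$ about $a$ using $\partial_a^n\ph^{\ast}(s,a,\la)=(-1)^n(s)_n\ph^{\ast}(s+n,a,\la)$, and only then applies the Hadamard operator $\I{r}{s}$. The Stirling polynomials enter in a single step: expanding $(s+w)_n=\sum_m\fs_m^n(s)w^m$ (Proposition~2) inside the contour integral of $\I{r}{s}$ gives
\[
\I{r}{s}\bigl[(s)_n\ph^{\ast}(s+n,a,\la)\bigr]=\sum_{m=0}^n\fs_m^n(s)(r)_m\,\ph^{\ast}_{r+m}(s+n,a,\la).
\]
You instead start from the explicit series $\ph_r(s,a,\la)=\sum_l e(\la l)F_{r,s}(a+l)$ and build the Stirling structure into your differentiation formula for $F_{r,s}$. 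The two routes are dual: your formula for $\partial_a^nF_{r,s}$ is exactly what one gets by applying the display above to $a^{-s}$. For Part~(i) they are of comparable length.

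For Part~(ii) the paper's route is cleaner. From $\ze^{\ast}(s,a+1)-\ze^{\ast}(s,a)=-s\int_0^1(a+u)^{-s-1}(1-u)\,du$ one Taylor-expands the left side, cancels the common factor $s$ via $(s)_n=s(s+1)_{n-1}$, shifts $(s,n,N)\mapsto(s-1,n+1,N+1)$, and then applies $\I{r}{s}$; no back-and-forth between $\ze_r$ and $\ze^{\ast}_r$ is needed. Your two-stage scheme (first the $N=1$ identity, then Taylor-expand $\ze^{\ast}_r(s+1,a+u)$ and $\ze^{\ast}_{r+1}(s+1,a+u)$ in $u$) does work, but to recover the coefficients $\tfrac{(-1)^{n+1}}{(n+1)!}\fs_m^n(s)(r)_m$ of (4.4) you must recombine the $s$- and $r$-weighted pieces via the recursion
\[
\fs_m^n(s)=s\,\fs_m^{n-1}(s+1)+\fs_{m-1}^{n-1}(s+1),
\]
which follows immediately from $(s+y)_n=(s+y)(s+1+y)_{n-1}$ together with Proposition~2, but which your sketch does not mention. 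A shorter variant of your own approach is to apply the full Euler--Maclaurin identity
\[
f(x)-\int_0^1 f(x+u)\,du=-\sum_{n=1}^{N-1}\frac{f^{(n)}(x)}{(n+1)!}-\frac{1}{N!}\int_0^1 f^{(N)}(x+v)(1-v)^N\,dv
\]
directly with $f=F_{r,s}$ and sum over $l$; the $\ze_{r+m}\to\ze^{\ast}_{r+m}$ conversion then happens once, and the $\ps$-contributions collapse to the explicit integral term by this same identity applied to $\ps_r(s,\cdot)$.
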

\begin{proof}
The defining series in (1.1) readily implies the relations 
\begin{align*}
\ph(s,a,\la)=a^{-s}+e(\la)\ph(s,a+1,\la),\tag{4.5}
\end{align*}
\begin{align*}
\Bigl(\fr{\del}{\del z}\Bigr)^n\ph(s,z,\la)
=(-1)^n(s)_n\ph(s+n,z,\la)
\qquad(n=0,1,\ldots),\tag{4.6}
\end{align*}
where the latter holds for any complex $s\neq1$ and in $|\arg z|<\pi$ by analytic continuation; 
this is combined with the relation, by (1.4), 
\begin{align*}
\Bigl(\fr{\del}{\del z}\Bigr)^n\ps(s,z)=(-1)^n(s)_n\ps(s+n,z)
\qquad(n=0,1,\ldots)
\tag{4.7}
\end{align*} 
to imply upon (1.5) that 
\begin{align*}
\Bigl(\fr{\del}{\del z}\Bigr)^n\ph^{\ast}(s,z,\la)
=(-1)^n(s)_n\ph^{\ast}(s+n,z,\la)
\qquad(n=0,1,\ldots).\tag{4.8}
\end{align*}

We now treat Case i), where $\ph^{\ast}(s,a,\la)=\ph(s,a,\la)$ holds by (1.5). Applying 
Taylor's formula (centered at $z=a$) to expand $\ph^{\ast}(s,a+1,\la)$ on the right side of 
(4.5), and then subtructing $e(\la)\ph^{\ast}(s,a,\la)$ from both sides, we see for any 
integer $N\geq1$ that 
\begin{align*}
\{1-e(\la)\}\ph^{\ast}(s,a,\la)
&=a^{-s}
+e(\la)\Biggl\{
\sum_{n=1}^{N-1}\fr{(-1)^n(s)_n}{n!}\ph^{\ast}(s+n,a,\la)\\
&\quad+\fr{(-1)^N(s)_N}{(N-1)!}\int_0^1\ph^{\ast}(s+N,a+u,\la)(1-u)^{N-1}du\Biggr\}.
\end{align*} 
We further operate $\I{r}{s}$ (see (2.1)) to both sides above, and then note that 
\begin{align*}
\I{r}{s}a^{-s}=a^{-s}\log^{-r} a\tag{4.9}
\end{align*}
for any $(r,s)\in\mathbb{C}^2$ and for $a>1$, and further that 
the resulting form of $\I{r}{s}\{(s)_n\ph^{\ast}(s,a,\la)\}$ in the $n$th indexed term 
equals, by (4.2),  
\begin{align*}
&\sum_{m=0}^{n}\fs_m^n(s)\cdot\fr{1}{\vGa(r)\{e(r)-1\}}
\int_{\infty}^{(0+)}\ph^{\ast}(s+n+z,a,\la)z^{r+m-1}dz\tag{4.10}\\
&\quad=\sum_{m=0}^n\fs_m^n(s)
\fr{\vGa(r+m)\{e(r+m)-1\}}{\vGa(r)\{e(r)-1\}}
\ph^{\ast}_{r+m}(s+n,a,\la),
\end{align*}
to conclude (4.3).

We next treat Case ii). Suppose temporarily that $\si>1$. It follows from (4.5) and the expression
\begin{align*}
\ps(s,z)
&=\int_z^{z+\infty}\xi^{-s}d\xi
=\int_0^{+\infty}(z+u)^{-s}du,\tag{4.11}
\end{align*}
where the path of integration is the horizontal ray, that 
\begin{align*}
\ze^{\ast}(s,a+1)&=\ze^{\ast}(s,a)-a^{-s}+\int_a^{a+1}\xi^{-s}d\xi\\
&=\ze^{\ast}(s,a)-s\int_0^1(a+u)^{-s-1}(1-u)du.
\end{align*}
We now apply Taylor's fomula (centered at $z=a$) to expand $\ze^{\ast}(s,a+1)$ on the left side 
above, then subtruct the term $\ze^{\ast}(s,a)$ from both sides, cancelling out the factor 
$s$ (upon noting $(s)_n=s(s+1)_{n-1}$ for $n\geq1$), and further replace $(s,n,N)$ by 
$(s-1,n+1,N+1)$ in the resulting expression, to find for any $N\geq1$ that
\begin{align*}
\ze^{\ast}(s,a)
&=\int_0^1(a+u)^{-s-1}(1-u)du
+\sum_{n=1}^{N-1}\fr{(-1)^{n+1}(s)_n}{(n+1)!}\ze^{\ast}(s+n,a)\\
&\quad+\fr{(-1)^{N+1}(s)_N}{N!}\int_0^1\ze^{\ast}(s+N,a+u)(1-u)^Ndu,
\end{align*}
which is valid for all $s\in\mathbb{C}$ by analytic continuation. Operating $\I{r}{s}$ to both sides above, and noting (4.2), (4.9) and (4.10) again, we conclude (4.4).    
\end{proof}
\begin{lemma}
For any complex $r$ and $s$, and any real $a$ and $\la$ with $a>1$, the vertical estimate
\begin{align*}
\ph^{\ast}_r(s,a,\la)
\ll(|\ta|+|t|+1)^{\max(0,\lf2-\si\rf)},\tag{4.12}
\end{align*}
holds, where the implied $\ll$-constant depends at most on $\rh$, $\si$, $a$ and $\la$.
\end{lemma}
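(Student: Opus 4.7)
The plan is to establish the estimate by induction on the nonnegative integer $k = \max(0, \lf 2-\si \rf)$, lowering $\si$ to $\si + 1$ at each step by means of the recurrences (4.3) and (4.4).

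For the base case $\si > 1$ I claim $\ph^{\ast}_r(s,a,\la) = O(1)$ uniformly in $(t, \ta)$. To prove this, substitute the defining Dirichlet series (1.1) into the Hadamard integral (2.1) and interchange sum and contour integral (justified by absolute convergence when $\si > 1$); the resulting inner contour integral reduces, via the classical Hankel identity
\[
\int_{\infty}^{(0+)} e^{-uz} z^{r-1}\,dz = \{e(r) - 1\}\vGa(r) u^{-r} \qquad (u > 0)
\]
applied with $u = \log(a+l) > 0$, and the gamma factor cancels against the normalisation in (2.1). This yields
\[
\ph^{\ast}_r(s,a,\la) = \sum_{l=0}^{\infty} e(\la l)(a+l)^{-s}\log^{-r}(a+l)
\]
when $\la \notin \bbZ$; for $\la \in \bbZ$ one handles the extra contribution of $\ps(s,a) = \int_0^{\infty}(a+u)^{-s}du$ (see (4.11)) analogously, producing an absolutely convergent integral. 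Taking absolute values and using $\si > 1$ together with $\log(a+l) \geq \log a > 0$ gives an $O(1)$ bound depending only on $\rh, \si, a, \la$.

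For the inductive step, assume the stated bound at $\si + 1$, and apply (4.3) (resp.\ (4.4)) with $N = 1$. The explicit leading term, namely $a^{-s}\log^{-r}a/\{1 - e(\la)\}$ in (4.3) or $\int_0^1 (a+u)^{-s-1}\log^{-r}(a+u)(1-u)\,du$ in (4.4), is clearly $\ll 1$. Each remaining term has the shape $\fs_m^1(s)(r)_m \int_0^1 \ph^{\ast}_{r+m}(s+1, a+u, \la)(1-u)\,du$ with $m \leq 1$. The degree statement $\deg_x \fs_m^n(x) = n - m$ observed just after Proposition 1, together with $|(r)_m| \ll (|\ta|+1)^m$, gives
\[
|\fs_m^1(s)(r)_m| \ll (|t|+1)^{1-m}(|\ta|+1)^m \ll |t| + |\ta| + 1,
\]
and the inductive hypothesis, uniform for $a+u \in [a, a+1]$, supplies $|\ph^{\ast}_{r+m}(s+1, a+u, \la)| \ll (|\ta|+|t|+1)^{\max(0, \lf 1-\si \rf)}$. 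Multiplying produces the desired bound $(|\ta|+|t|+1)^{\max(0,\lf 2-\si\rf)}$ since $1 + \max(0, \lf 1-\si\rf) = \max(0, \lf 2-\si\rf)$ when $\si \leq 1$.

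The subtle point is that the recursion shifts the third argument $a \mapsto a+u$ with $u \in [0,1]$, so the inductive hypothesis must be carried with a constant locally uniform in $a$ over $(1, \infty)$. This causes no real difficulty: the Dirichlet-series representation above gives a constant depending continuously on $a$, and each shifted endpoint remains bounded away from $1$, so the constants deteriorate only by a factor uniform on compact subsets of $(1, \infty)$.
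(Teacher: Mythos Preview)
Your proof is correct and follows essentially the same inductive scheme as the paper: both arguments establish the base case $\si>1$ from the explicit series/integral representations (4.14), and then descend in $\si$ using the recurrences (4.3)/(4.4) from Lemma~1 together with the polynomial bounds $\fs_m^n(s)\ll(|t|+1)^{n-m}$ and $(r)_m\ll(|\ta|+1)^m$. The only structural difference is that you invoke Lemma~1 with $N=1$ at each step, advancing $\si$ by one unit per application, whereas the paper applies Lemma~1 once with $N=\lf2-\si\rf$ and feeds the induction hypothesis into every intermediate term $n=1,\ldots,N-1$; your version is marginally cleaner since it needs only the simplest instance of the recursion, and you correctly flag the uniformity-in-$a$ issue introduced by the shift $a\mapsto a+u$ in the remainder integral.
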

\begin{proof}
Suppose temporarily that $\si>1$, and write
\begin{align*}
\ph_r(s,a,\la)=\I{r}{s}\ph(s,a,\la)
\qquad
\text{and}
\qquad
\ps_r(s,a)=\I{r}{s}\ps(s,a)
\tag{4.13}
\end{align*}
for $a>1$. We then observe that the term-by-term application of $\I{r}{s}$ on the right sides of (1.1) and (4.11) shows for $\si>1$ that, by (4.9),  
\begin{align*}
\begin{split}
\ph_r(s,a,\la)
&=\sum_{l=0}^{\infty}e(\la l)(a+l)^{-s}\log^{-r}(a+l),\\
\ps_r(s,a)&=\int_a^{+\infty}\xi^{-s}\log^{-r}\xi d\xi.
\end{split}
\tag{4.14}
\end{align*}   

Consider first the case of $\si>1$. It is then seen from (1.5) and (4.14) that 
\begin{align*}
\ph^{\ast}_r(s,a,\la)
&=\ph_r(s,a,\la)+\de_{\mathbb{Z}}(\la)\ps_r(s,a)\ll 1,
\tag{4.15}
\end{align*}
which just gives (4.12) for $\si>1$.

Consider next the case of $1-N<\si\leq 2-N$ with any integer $N\geq1$. We shall then 
prove (4.12) by induction on $N$; the induction hypothesis in this case is that the inequality
\begin{align*}
\ph^{\ast}_r(s,a,\la)
\ll(|\ta|+|t|+1)^n
\tag{4.16}
\end{align*}
holds for $1-n<\si\leq 2-n$ with $n=1,2,\ldots,N-1$. Noting that $1-(N-n)<\si+n\leq2-(N-n)$ in the present case, we substitute the bound in (4.17) with $(s,n)$ replaced by $(s+n,N-n)$, and also the bounds  $\fs_m^n(s)\ll (|t|+1)^{n-m}$ and $(r)_m\ll(|\ta|+1)^m$ into the right side of (4.3) or (4.4), 
to find that 
\begin{align*}
\ph^{\ast}_r(s,a,\la)
&\ll 1+\sum_{n=1}^{N-1}\sum_{m=0}^n(|t|+1)^{n-m}(|\ta|+1)^m(|\ta|+|t|+1)^{N-n}\\
&\ll(|\ta|+|t|+1)^N,
\end{align*}    
where the condition $1-N<\si\leq 2-N$ is equivalent to $N=\lf2-\si\rf$; this with (4.15) concludes (4.12). 
\end{proof}
\begin{lemma}
Let $f(s)$ is a function horomorphic at any $s\in\mathbb{C}\setminus]-\infty,1]$. For any 
$m\in\bbZ$, if $f(s+x)$ belongs {\rm(}as a function of $x${\rm)} to the class 
$x^{\max(1+m,0)}L_x^1[0,+\infty[$, then 
\begin{align*}
f^{(m)}(s)=(-1)^m\I{-m}{s}f(s).
\tag{4.17}
\end{align*} 
\end{lemma}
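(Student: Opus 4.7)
The plan is to treat the two cases $m\geq0$ and $m\leq-1$ separately, after first recasting the prefactor in (2.1) into a tractable form. Using the reflection formula $\vGa(r)\vGa(1-r)=\pi/\sin(\pi r)$ together with the identity $e(r)-1=2ie^{\pi ir}\sin(\pi r)$, I would rewrite
$$\vGa(r)\{e(r)-1\}=\fr{2\pi ie^{\pi ir}}{\vGa(1-r)},$$
which is entire in $r$. At $r=-m$ this equals $2\pi i(-1)^m/m!$ when $m\geq0$ (finite and non-zero), while it vanishes when $m\leq-1$ because $\vGa(1+m)$ then develops a pole; this innocuous rewriting allows both cases to be handled without ad hoc limit computations.

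For $m\geq0$, the exponent $r-1=-m-1$ is an integer, so $z^{r-1}$ is single-valued on $\mathbb{C}\setminus\{0\}$, and hence the two horizontal contributions to the Hankel contour in (2.1) cancel identically, leaving only the small loop about the origin. By Cauchy's integral formula for higher derivatives this loop contributes $2\pi if^{(m)}(s)/m!$, which combined with the simplified prefactor gives $\I{-m}{s}f(s)=(-1)^mf^{(m)}(s)$; this is equivalent to (4.17) since $(-1)^{-m}=(-1)^m$ for integer $m$.

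For $m\leq-1$, set $n=-m\geq1$. First I would establish the auxiliary identity $\I{r}{s}f(s)=\vGa(r)^{-1}\int_0^{\infty}f(s+x)x^{r-1}dx$, valid for $\re r>0$: for non-integer $r$ this follows from (2.1) by letting the loop radius tend to $0$ (its contribution vanishes since $\re r>0$) so that the two horizontal parts produce the factor $e(r)-1$ which cancels against the denominator, and analytic continuation then extends the formula to positive integer $r$, resolving the apparent $0/0$. Specialising to $r=n$ gives $\I{n}{s}f(s)=(n-1)!^{-1}\int_0^{\infty}f(s+x)x^{n-1}dx$, and it remains to verify the Cauchy repeated-integration identity
$$f^{(-n)}(s)=\fr{(-1)^n}{(n-1)!}\int_0^{\infty}f(s+x)x^{n-1}dx,$$
which I would prove by induction on $n$ from the recursive definition (1.6), the inductive step requiring only a Fubini interchange and the elementary evaluation $\int_0^y(y-u)^{n-1}du=y^n/n$. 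Multiplying through by $(-1)^m=(-1)^n$ then yields (4.17). The principal technical point will be to justify the Fubini interchange and the contour manipulations under the stated weighted $L^1$-hypothesis on $f(s+x)$; conceptually, however, the lemma amounts to a unified expression of Cauchy's formulae for differentiation and for repeated integration, both encoded in the single operator $\I{r}{s}$.
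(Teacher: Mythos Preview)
Your proposal is correct and follows essentially the same route as the paper: for $m\geq0$ you collapse the Hankel contour in (2.1) to a small loop and invoke Cauchy's differentiation formula, and for $m=-n\leq-1$ you reduce $\I{n}{s}f(s)$ to the ordinary integral $(n-1)!^{-1}\int_0^{\infty}f(s+x)x^{n-1}dx$ and establish the Cauchy repeated-integration identity by induction on $n$ with a Fubini interchange. The paper does exactly this, only without your preliminary rewriting of $\vGa(r)\{e(r)-1\}$ via the reflection formula; it instead passes directly to the limit $r\to-m$ in (2.1) and runs the induction in one sweep rather than isolating the auxiliary identity $\I{r}{s}f(s)=\vGa(r)^{-1}\int_0^{\infty}f(s+x)x^{r-1}dx$ as a separate step.
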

\begin{proof}
Suppose first that $m\geq0$. Then the limiting case $r\to-m$ of (2.1)  shows  
\begin{align*}
\I{-m}{s}f(s)=\fr{(-1)^mm!}{2\pi i}\oint_{|z|=\et}\fr{f(s+z)}{z^{m+1}}dz=(-1)^mf^{(m)}(s),
\end{align*}
where $\et>0$ is taken so as to be smaller than the distance between $s$ and the half-line $]-\infty,1]$. 

Next the assertion for $m=-n<0$ $(n=1,2,\ldots)$ is proved by the induction on $n$. We see for $n=1$ that 
\begin{align*}
\I{1}{s}f(s)=-\int_{0}^{\infty}f(s+x)dx=-f^{(-1)}(s),
 \end{align*}
since $\oint_{|z|=\de}f(s+z)dz\to0$ as $\de\to0^+$. Suppose now that 
\begin{align*}
f^{(-n+1)}(s)=\fr{(-1)^{n-1}}{\vGa(n-1)}\int_s^{\infty}f(w)(w-s)^{n-2}dw
=(-1)^{n-1}\I{n-1}{s}f(s)
\end{align*} 
for $n\geq2$. Then we have 
\begin{align*}
f^{(-n)}(s)&=-\int_s^{\infty}\fr{(-1)^{n-1}}{\vGa(n-1)}
\int_{w'}^{\infty}f(w)(w-w')^{n-2}dwdw'\\
&=\fr{(-1)^n}{\vGa(n)}\int_s^{\infty}f(w)(w-s)^{n-1}dw
=(-1)^n\I{n}{s}f(s), 
\end{align*}
which is the assertion for $m=-n$. Lemma 3 is thus proved.
\end{proof}
\section{Asymptotic expansions for the Laplace-Mellin transform}
We shall prove Theorems~1~and~2 in this section.

Suppose temporarily that $\si>1$ and $|\arg z|<\pi/2$. We then operate $\LM_{z;\ta}^{\al}$ 
(see (1.2)) term-by-term on the right sides of (4.13), upon noting (4.17), that   
\begin{align*}
\begin{split}
\LM_{z;\ta}^{\al}\ph^{(m)}(s+\ta,a,\la)
&=(-1)^m\sum_{l=0}^{\infty}e(\la l)(a+l)^{-s}\log^m(a+l)\\
&\quad\times\{1+z\log(a+l)\}^{-\al},\\
\LM_{z;\ta}^{\al}\ps^{(m)}(s,a)
&=(-1)^m\int_a^{\infty}\xi^{-s}\log^m\xi\cdot(1+z\log\xi)^{-\al}d\xi
\end{split}
\tag{5.1}
\end{align*}
for any $m\in\mathbb{Z}$, where the resulting expressions converge absolutely. Let $(u)$ for $u\in\mathbb{R}$ denote the vertical path from $u-i\infty$ to $u+i\infty$, and write $w=u+iv$ with real 
coordinates $u$ and $v$ throughout the sequel. The Mellin-Barnes 
formula 
\begin{align*}
(1+Z)^{-\al}
=\fr{1}{2\pi i}\int_{(u)}\vG{\al+w,-w}{\al}Z^wdw
\tag{5.2}
\end{align*} 
holds for $|\arg Z|<\pi$ with $-\re\al<u<0$; this is incorporated in each term on the right sides of (5.1), and then the order of the $w$-integral and the $l$-sum or $\xi$-integral is interchanged to 
show that
\begin{align*}
\begin{split}
\LM_{z;\ta}^{\al}\ph^{(m)}(s+\ta,a,\la)
&=\fr{(-1)^m}{2\pi i}\int_{(u_0)}\vG{\al+w,-w}{\al}\ph_{-w-m}(s,a,\la)z^wdw,\\
\LM_{z;\ta}^{\al}\ps^{(m)}(s,a)
&=\fr{(-1)^m}{2\pi i}\int_{(u_0)}\vG{\al+w,-w}{\al}\ps_{-w-m}(s,a)z^wdw
\end{split}
\tag{5.3}
\end{align*}
(see (4.13)) for any $m\in\mathbb{Z}$ with $-\re\al<u_0<0$. 
We therefore obtain upon (1.5) the following lemma.
\begin{lemma}
The formula
\begin{align*}
\LM_{z;\ta}^{\al}
(\ph^{\ast})^{(m)}(s+\ta,a,\la)
&=\fr{(-1)^m}{2\pi i}\int_{(u_0)}\vG{\al+w,-w}{\al}\ph^{\ast}_{-w-m}(s,a,\la)z^wdw
\tag{5.4}
\end{align*}
holds for any $m\in\mathbb{Z}$ with a constant $u_0$ satisfying $-\re\al<u_0<0$, where the integral on the right side converges absolutely for all $(s,z)$ with $s\in\mathbb{C}$ and $|\arg z|<\pi${\rm;} this provides the analytic continuation of the left side to the same region of $(s,z)$. 
\end{lemma}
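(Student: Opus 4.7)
The plan is to derive (5.4) immediately from the two representations in (5.3), and then extend by analytic continuation using the absolute convergence of the Mellin-Barnes integral on the right side.

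First, I would observe that the Hadamard-type operator $\I{r}{s}$ defined by (2.1) is linear, so applying it to the defining relation (1.5) yields $\ph^{\ast}_r(s,a,\la) = \ph_r(s,a,\la) - \de_{\bbZ}(\la)\ps_r(s,a)$ in the notation of (4.13). Taking the corresponding linear combination of the two identities in (5.3) then produces (5.4) in the initial region $\si > 1$, $|\arg z| < \pi/2$ where (5.3) was established.

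Next, I would verify that the Mellin-Barnes integral on the right side of (5.4) converges absolutely, and is holomorphic in $(s,z)$, throughout the extended region $s \in \mathbb{C}$, $|\arg z| < \pi$. Writing $w = u_0 + iv$, Stirling's formula yields
\begin{align*}
\left|\vG{\al+w,-w}{\al}\right| \ll (|v|+1)^{\re\al - 1} e^{-\pi|v|}
\qquad (|v|\to\infty),
\end{align*}
while $|z^w| = |z|^{u_0} e^{-v \arg z} \leq |z|^{u_0} e^{(\pi-\et)|v|}$ on any closed sub-sector $|\arg z| \leq \pi-\et$. The vertical estimate (4.12) of Lemma~2 further bounds $\ph^{\ast}_{-w-m}(s,a,\la)$ by $(|v|+|t|+1)^{\max(0,\lf 2-\si\rf)}$. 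Multiplying the three bounds gives an integrand of order $(|v|+1)^C e^{-\et|v|}$ with $C$ depending on $\si$, $\re\al$, and $m$, which is integrable on $(u_0)$. Hence the right side of (5.4) defines a holomorphic function of $(s,z)$ on $s \in \mathbb{C}$, $|\arg z| < \pi$. Since both sides are holomorphic on this connected region and agree on the open subset $\si > 1$, $|\arg z| < \pi/2$, the identity theorem extends (5.4) to the full region and simultaneously provides the claimed analytic continuation.

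The principal obstacle is the exponential book-keeping near $|\arg z| = \pi$: it is precisely the cancellation between the gamma factor $e^{-\pi|v|}$ and the exponential $e^{|v\arg z|}$ arising from $z^w$ that produces the net decay $e^{-\et|v|}$ needed to absorb the polynomial growth supplied by (4.12). Without this cancellation the integrand would not be integrable; correspondingly, convergence fails on the rays $\arg z = \pm\pi$, which is why the region of validity is the open sector $|\arg z| < \pi$ and why the vertical estimate (4.12) is indispensable.
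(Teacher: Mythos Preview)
Your proposal is correct and follows essentially the same route as the paper: first obtain (5.4) from the pair (5.3) by linearity via (1.5), then establish absolute convergence of the Mellin--Barnes integral on $(u_0)$ by combining Stirling's estimate for the gamma quotient with the vertical bound (4.12), yielding exactly the majorant $e^{-|v|(\pi-|\arg z|)}(|v|+1)^{\re\al-1}(|v|+|t|+1)^{\max(0,\lf2-\si\rf)}$ that the paper records as (5.6). Your explicit invocation of the identity theorem to pass from the initial region $\si>1$, $|\arg z|<\pi/2$ to the full domain is a welcome clarification of what the paper leaves implicit.
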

\begin{proof}
Stirling's formula for the gamma function implies that 
\begin{align*}
\vGa(s)\asymp (|t|+1)^{\si-1/2}e^{-\pi|t|/2}
\tag{5.5}
\end{align*}
for any $s\in\mathbb{C}$ apart from the ploes at $s=-n$ $(n=0,1,\ldots)$ (cf.~\cite[p.492, A.7(A.34)]{ivic1985}). Then the integrand in (5.4) on the line $\re w=u$ is (apart from its poles) bounded  by (4.12) and (5.5) as 
\begin{align*}
\ll e^{-|v|(\pi-|\arg z|)}(|v|+1)^{\re\al-1}(|v|+|t|+1)^{\max(0,\lf2-\si\rf)}
\tag{5.6}
\end{align*}
which asserts the required absolute convergence.
\end{proof}
We are now ready to prove Theorems~1~and~2. 
\begin{proof}[Proof of Theorem~1]
Let $N\geq0$ be any integer, and choose a constant $u_N^+$ such as  
$\max(-\re\al,N-1)<u_N^+<N$. We can then move upon (5.6) the path of integration in (5.4) 
to the right from $(u_0)$ to $(u_N^+)$, passing over the poles of the integrand at $s=n$ 
$(n=0,1,\ldots, N-1)$. Collecting the residues of the relevant poles, we find that the 
expression (2.3) holds with 
\begin{align*}
R_{m,N}^{1,+}(s,a,\la;z)
=\fr{(-1)^m}{2\pi i}\int_{(u_N^+)}\vG{\al+w,-w}{\al}
\ph^{\ast}_{-w-m}(s,a,\la)z^wdw.
\tag{5.7}
\end{align*}

The remaining estimate (2.4) is obtained by moving further the path in (5.7) from $(u_N^+)$ 
to $(u_{N+1}^+)$; this gives
\begin{align}
R_{m,N}^{1,+}(s,a,\la;z)
=\fr{(-1)^N(\al)_N}{N!}\ph^{\ast}_{-N-m}(s,a,\la)z^N
+R_{m,N+1}^{1,+}(s,a,\la;z).
\tag{5.8}
\end{align}
Here on the right side, the first term is estimated as $\ll(|t|+1)^{\max(0,\lf2-\si\rf)}|z|^N$ by (4.12), 
while the second, upon using (5.6) and (5.7) with $u_{N+1}^+$ instead of $u_N^+$, as
\begin{align*}
&\ll\int_{-\infty}^{\infty}e^{-|v|(\pi-|\arg z|)}(|v|+1)^{\re\al-1}
(|t|+|v|+1)^{\max(0,\lf2-\si\rf)}|z|^{u_{N+1}^+}dv\\
&\ll(|t|+1)^{\max(0,\lf2-\si\rf)}|z|^{u_{N+1}^+},
\end{align*}
both when $z\to0$ through $|\arg z|\leq\pi-\et$ for any small $\et>0$, and hence the proof 
concludes by $N<u_{N+1}^+$. The last $t$-bound here is derived by the following lemma. 
\end{proof}
\begin{lemma}
 For any real $\et$, $\mu$, $\nu$ and $V$ with $\et>0$ and $V\geq0$, 
we have 
\begin{align*}
\int_0^{\infty}e^{-\et v}(v+1)^{\mu}(V+v+1)^{\nu}dv\asymp (V+1)^{\nu},
\tag{5.9}
\end{align*}  
where the $\asymp$-constants depend at most on $\et$, $\mu$ and $\nu$. 
\end{lemma}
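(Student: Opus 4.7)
The plan is to sandwich the factor $(V+v+1)^{\nu}$ between explicit powers of $(V+1)$---treating the signs $\nu\geq0$ and $\nu<0$ separately---and then absorb the remaining $v$-dependence into a standard convergent integral whose value depends only on $\et,\mu,\nu$. The two elementary inequalities driving the entire argument are
\begin{align*}
V+1\leq V+v+1\leq(V+1)(v+1)\qquad(v,V\geq0),
\end{align*}
where the right one follows at once from $(V+1)(v+1)-(V+v+1)=Vv\geq0$.

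For the upper bound $\ll(V+1)^{\nu}$, I would split on the sign of $\nu$. When $\nu\geq0$, the right inequality gives $(V+v+1)^{\nu}\leq(V+1)^{\nu}(v+1)^{\nu}$, producing
\begin{align*}
\int_0^{\infty}e^{-\et v}(v+1)^{\mu}(V+v+1)^{\nu}dv
\leq(V+1)^{\nu}\int_0^{\infty}e^{-\et v}(v+1)^{\mu+\nu}dv,
\end{align*}
and the last integral is a finite constant depending only on $\et,\mu,\nu$. When $\nu<0$, the left inequality gives $(V+v+1)^{\nu}\leq(V+1)^{\nu}$, so the same estimation works with $(v+1)^{\mu}$ in place of $(v+1)^{\mu+\nu}$.

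For the lower bound $\gg(V+1)^{\nu}$, I would again split on the sign of $\nu$. When $\nu\geq0$, the left inequality yields $(V+v+1)^{\nu}\geq(V+1)^{\nu}$, and integrating against $e^{-\et v}(v+1)^{\mu}$ over all of $[0,\infty[$ delivers a positive multiple of $(V+1)^{\nu}$. When $\nu<0$, I would restrict the integration to $v\in[0,1]$, on which $V+v+1\leq V+2\leq 2(V+1)$ and hence $(V+v+1)^{\nu}\geq 2^{\nu}(V+1)^{\nu}$; the restricted integral $\int_0^1 e^{-\et v}(v+1)^{\mu}dv>0$ then supplies the required positive multiple of $(V+1)^{\nu}$. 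The whole proof thus reduces to a four-case check using elementary inequalities and the convergence of standard $v$-integrals, so I do not anticipate any genuine obstacle; the only care needed is to confirm that each implied constant depends solely on $\et,\mu,\nu$, which is immediate from the factorizations displayed above.
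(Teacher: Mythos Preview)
Your argument is correct. The key inequality $V+1\leq V+v+1\leq(V+1)(v+1)$ does all the work, and the four-case check goes through exactly as you describe; every implied constant is visibly a fixed convergent integral in $v$ depending only on $\et,\mu,\nu$.

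Your route differs from the paper's. The paper splits the range of integration at the natural scale $v=V+1$, writing $I=I_1+I_2$ with $I_1=\int_0^{V+1}$ and $I_2=\int_{V+1}^{\infty}$, and then appeals to integration by parts to obtain $I_1\asymp(V+1)^{\nu}(1+e^{-\et V})$ and $I_2\asymp(V+1)^{\mu+\nu}e^{-\et V}$, from which the claim follows since the second piece is exponentially negligible. Your approach instead bounds the integrand pointwise via the factorization $(V+v+1)\leq(V+1)(v+1)$ and splits on the sign of $\nu$ rather than on the range of $v$. Your method is more elementary---no integration by parts is needed, and each case reduces immediately to a single fixed integral---while the paper's decomposition isolates more precisely where the main contribution comes from (the range $v\leq V+1$) and what the tail contributes. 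For the purposes of the paper, where only the order of magnitude $(V+1)^{\nu}$ is ever used, your shorter argument is entirely adequate.
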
 
\begin{proof}
Let the left side denote $I$, and split the integral as
\begin{align*}
I=\int_0^{V+1}+\int_{V+1}^{\infty}=I_1+I_2,
\end{align*}
say. The integration by parts applied to each $I_j$ shows that 
\begin{align*}
I_1\asymp (V+1)^{\nu}(1+e^{-\et V})
\qquad
\text{and}
\qquad
I_2\asymp (V+1)^{\mu+\nu}e^{-\et V},
\end{align*} 
which readily imply the assertion.
\end{proof}

\begin{proof}[Proof of Theorem~2]
The proof of Theorem~2 is quite similar to that of Theorem~1, instead of moving the path 
in (5.4) to the left. Let $N$ be any nonnegative integer, and choose a constant $u_N^-$ such 
as $-\re\al-N<u_N^-<\min(-\re\al-N+1,0)$. We can then move the path of integration in 
(5.4) from $(u)$ to $(u_{N}^-)$, passing over the poles at $w=-\al-n$ $(n=0,1,\ldots,
N-1)$ of the integrand, and collect its residues to obtain the expression (2.5) with 
\begin{align*}
R_{m,N}^{1,-}(s,a,.\la;z)
=\fr{(-1)^m}{2\pi i}\int_{(u_N^-)}\vG{\al+w,-w}{\al}\ph^{\ast}_{-w-m}(s,a,\la)z^wdw.
\tag{5.10}
\end{align*}

The remaining estimate (2.6) follows similarly to the derivation of (2.4), by moving further 
the path from $(u_N^-)$ to $(u_{N+1}^-)$; this gives 
\begin{align*}
R_{m,N}^{1,-}(s,a,\la;z)
&=\fr{(-1)^N(\al)_N}{N!}\ph^{\ast}_{\al+N-m}(s,a,\la)z^{-\al-N}
+R_{m,N+1}^{1,-}(s,a,\la;z).
\end{align*}  
Here on the right side, the first term is estimated as $\ll(|t|+1)^{\max(0,\lf2-\si\rf)}|z|^{-\re\al-N}$ by (4.12), while the second as $\ll(|t|+1)^{\max(0,\lf2-\si\rf)}|z|^{u_{N+1}^-}$ by applying Lemma~5 with the estimate (5.6); the former bound exceeds the latter when $z\to\infty$ through $|\arg z|\leq\pi-\et$ for any small $\et>0$, since $u_{N+1}^-<-\re\al-N$. The proof of Theorem~2 thus complete. 
\end{proof}
\section{Asymptotic expansions for the Riemann-Liouville transforms}
We shall prove Theorems~3~and~4 in this section. Prior to the proofs, several notations are 
prepared in what follows.   

Let $\t1F1{\ka}{\nu}{Z}$ denote Kummer's confluent hypergeometric function of the first kind, 
defined by
\begin{align*}
\1F1{\ka}{\nu}{Z}
=\sum_{k=0}^{\infty}\fr{(\ka)_k}{(\nu)_kk!}Z^k
\qquad(|Z|<+\infty)
\end{align*}
for any complex $\ka$ and $\nu$ with $\nu\notin\mathbb{Z}_{\leq0}$ (cf.~\cite[p.248, 6.1(1)]{erdelyi1953a}), which allows the Euler type integral formula 
\begin{align*}
\1F1{\ka}{\nu}{Z}
=\vG{\nu}{\ka,\nu-\ka}\int_0^1e^{Z\xi}\xi^{\ka-1}(1-\xi)^{\nu-\ka-1}d\xi
\tag{6.1}
\end{align*} 
in $|Z|<+\infty$ and for any complex $\ka$ and $\nu$ with $\re\nu>\re\ka>0$ (cf.~\cite[p.255, 6.5(1)]{erdelyi1953a}).  

Suppose now temporarily that $\si>1$ and $|\arg z|<\pi/2$. We then operate $\RL_{z;\ta}^{\al,\be}$ (see (1.3)) term-by-term on the right sides of (4.13), upon noting (4.17) and using (6.1), that 
\begin{align*}
\begin{split}
\RL_{z;\ta}^{\al,\be}\ph^{(m)}(s+\ta,a,\la)
&=(-1)^m\sum_{l=0}^{\infty}e(\la l)(a+l)^{-s}\log^m(a+l)\\
&\quad\times\1F1{\al}{\al+\be}{-z\log(a+l)},\\
\RL_{z;\ta}^{\al,\be}\ps^{(m)}(s+\ta,a)
&=(-1)^m\int_a^{\infty}\xi^{-s}\log^m\xi\cdot\1F1{\al}{\al+\be}{-z\log\xi}d\xi,
\end{split}
\tag{6.2}
\end{align*}
where the resulting expressions both converge absolutely, since $\t1F1{\al}{\al+\be}{-Z}\asymp Z^{-\al}$ as $Z\to\infty$ through $|\arg Z|<\pi/2$ (cf.~\cite[p.278, 6.13.1(3)]{erdelyi1953a}). 
The Mellin-Barnes formula 
\begin{align*}
\1F1{\ka}{\nu}{Z}=\fr{1}{2\pi i}\int_{(u)}\vG{\ka+w,\nu,-w}{\ka,\nu+w}(-Z)^wdw
\tag{6.3}
\end{align*} 
holds for $|\arg Z|<\pi/2$ with a constant satisfying $-\re\ka<u<0$; this is incorporated in each term on the right sides of (6.2), and then the order of the $w$-integral and the $l$-sum or $\xi$-integral is interchanged to asserts, in view of (1.5), the following lemma.
\begin{lemma}
The formula
\begin{align*}
&\RL_{z;\ta}^{\al,\be}(\ph^{\ast})^{(m)}(s+\ta,a,\la)
\tag{6.4}\\
&\quad=\fr{(-1)^m}{2\pi i}\int_{(u_0)}\vG{\al+w,\al+\be,-w}{\al,\al+\be+w}
\ph^{\ast}_{-w-m}(s,a,\la)z^wdw
\end{align*}  
holds for $|\arg z|<\pi/2$ with a constant $u_0$ satisfying $-\re\al<u_0<0$, where the integral on 
the right side converges absolutely for all $(s,z)$ with $s\in\mathbb{C}$ and 
$|\arg z|<\pi/2${\rm;} this provides the analytic continuation of the left side to the same region 
of $(s,z)$. 
\end{lemma}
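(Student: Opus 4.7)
The plan is to proceed in close parallel with the proof of Lemma~6, since the derivation of formula~(6.4) itself has already been sketched in the paragraphs preceding the statement: for $\si>1$ and $|\arg z|<\pi/2$, one applies $\RL_{z;\ta}^{\al,\be}$ term-by-term to the two series/integral representations in~(4.13) (using (4.17) to insert the derivative/primitive $(\ph^{\ast})^{(m)}$), recognizes the inner factor as a $\1F1{\al}{\al+\be}{-z\log(a+l)}$ by~(6.1), substitutes the Mellin-Barnes representation~(6.3), and interchanges the order of the $w$-integral with the $l$-sum or $\xi$-integral. Once the absolute convergence of the right-hand side of~(6.4) is established on the wider region $s\in\mathbb{C}$, $|\arg z|<\pi/2$, the identity extends by analytic continuation and the claim follows.

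The only real work, then, is to estimate the integrand of~(6.4) on the vertical line $(u_0)$ with $-\re\al<u_0<0$. First, I would apply Stirling's formula~(5.5) to each gamma: on $w=u_0+iv$ one finds, for large $|v|$,
\begin{align*}
\left|\vG{\al+w,\al+\be,-w}{\al,\al+\be+w}\right|
\ll (|v|+1)^{-\re\be-u_0-1/2}\,e^{-\pi|v|/2},
\end{align*}
because the exponential decay $e^{-\pi|v|}$ contributed by $\vGa(\al+w)\vGa(-w)$ in the numerator is only partially cancelled by the factor $e^{+\pi|v|/2}$ from $\vGa(\al+\be+w)$ in the denominator. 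Combined with $|z^{w}|=|z|^{u_0}e^{-v\arg z}$ and the vertical estimate~(4.12) for $\ph^{\ast}_{-w-m}(s,a,\la)$, the integrand on $(u_0)$ is bounded by
\begin{align*}
\ll e^{-|v|(\pi/2-|\arg z|)}(|v|+1)^{-\re\be-u_0-1/2}(|v|+|t|+1)^{\max(0,\lf2-\si\rf)}|z|^{u_0},
\end{align*}
so the exponential factor yields absolute convergence precisely when $|\arg z|<\pi/2$, uniformly in $s\in\mathbb{C}$.

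The main obstacle is locating exactly where the sector $|\arg z|<\pi/2$ enters; compared with Lemma~6 (where the integrand of~(5.4) carries only $e^{-\pi|v|}$ from two numerator gammas, giving validity in $|\arg z|<\pi$), here the presence of $\vGa(\al+\be+w)$ in the denominator weakens the total exponential decay to $e^{-\pi|v|/2}$, and this is the mechanism that forces the restriction to a half-plane. Once this sector is identified and the bound above is written down, the remainder of the argument—Fubini-justified exchange of sum/integral in the initial region $\si>1$, agreement with the Mellin-Barnes representation, and analytic continuation in $s$ and $z$—proceeds exactly as in the proof of Lemma~6 and presents no further difficulty.
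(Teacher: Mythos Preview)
Your proposal is correct and matches the paper's own proof almost verbatim: the paper too reduces the lemma to bounding the integrand on $\re w=u$ via Stirling (5.5) and the vertical estimate (4.12), obtaining exactly your bound $\ll e^{-|v|(\pi/2-|\arg z|)}(|v|+1)^{-\re\be-u-1/2}(|v|+|t|+1)^{\max(0,\lf2-\si\rf)}|z|^{u}$, which is precisely (6.5). The only slip is bibliographic---the analogous Laplace--Mellin result you invoke (formula (5.4)) is Lemma~4 in the paper's numbering, not Lemma~6.
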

\bg{proof}
The integrand in (6.4) on the line $\re w=u$ is (apart from its poles) bounded above, by (4.12) and (5.5), 
as 
\begin{align*}
\ll e^{-|v|(\pi/2-|\arg z|)}(|v|+1)^{-\re\be-u-1/2}(|v|+|t|+1)^{\max(0,\lf2-\si\rf)}|z|^u,
\tag{6.5}
\end{align*}
which asserts the required absolute convergence. 
\ed{proof}
We are now ready to prove Theorems 3 and 4.
\begin{proof}[Proof of Theorem 3]
The same path moving (to the right) argument as in the proof of Theorem~1 leads us 
to the expression (2.8) with 
\begin{align*}
R^{2,+}_{m,N}(s,a,\la;z)
\tag{6.6}
&=\fr{(-1)^m}{2\pi i}\int_{(u_N^+)}\vG{\al+w,\al+\be,-w}{\al,\al+\be+w}
\ph^{\ast}_{-w-m}(s,a,\la)z^wdw
\end{align*}
with a constant $u_N^+$ satisfying $\max(-\re\al,N-1)<u_N^+<N$. The remaining inequality (2.9)  follows similarly to the derivation of (2.4), by using (4.12) and applying Lemma~5 with (6.5); 
the proof of Theorem~3 is complete.
\end{proof}
\begin{proof}[Proof of Thereom~4]
The connection formula  
\begin{align*}
\1F1{\ka}{\nu}{Z}
&=\vG{\nu}{\nu-\ka}e^{\vep(Z)\pi i\ka}U(\ka;\nu;Z)
+\vG{\nu}{\ka}e^{\vep(Z)\pi i(\ka-\nu)}
\tag{6.7}\\
&\quad\times e^ZU(\nu-\ka;\nu;e^{-\vep(Z)\pi i}Z) 
\end{align*}
holds in the sectors $0<|\arg Z|<\pi$ and for any complex $\ka$ and $\nu$ with $\nu\notin\mathbb{Z}_{\leq0}$ (cf.~\cite[p.259, 6.7.(7)]{erdelyi1953a}\cite[(10.5)]{katsurada-noda2017}); this is incorporated 
in each term on the right sides of (6.2) to yield that 
\begin{align*}
\begin{split}
\RL_{z;\ta}^{\al,\be}\ph^{(m)}(s+\ta,a,\la)
&=(-1)^m\sum_{l=0}^{\infty}e(\la l)(a+l)^{-s}\log^m(a+l)\\
&\quad\times\biggl\{\vG{\al+\be}{\be}e^{-\vep(z)\pi i\al}
U(\al;\al+\be;e^{-\vep(z)\pi i}z\log(a+l))\\
&\quad+\vG{\al+\be}{\al}e^{\vep(z)\pi i\be}(a+l)^{-z}
U(\be;\al+\be;z\log(a+l))\biggr\},\\
\RL_{z;\ta}^{\al,\be}\ps^{(m)}(s+\ta,a)
&=(-1)^m\int_a^{\infty}\xi^{-s}\log^m\xi\\
&\quad\times\biggl\{\vG{\al+\be}{\be}e^{-\vep(z)\pi i\al}
U(\al;\al+\be;e^{-\vep(z)\pi i}z\log\xi)\\
&\quad+\vG{\al+\be}{\al}e^{\vep(z)\pi i\be}\xi^{-z}
U(\be;\al+\be;z\log\xi)\biggr\}d\xi
\end{split}
\tag{6.8}
\end{align*}
for any $m\in\mathbb{Z}$ and in the sectors $0<|\arg z|<\pi$. The right sides of (6.8) is further transformed by the Mellin-Barnes formula
\begin{align*}
U(\ka;\nu;Z)
&=\fr{1}{2\pi i}\int_{\cC}\vG{\ka+w,-w,1-\nu-w}{\ka,\ka-\nu+1}Z^wdw
\tag{6.9}
\end{align*}
for $|\arg Z|<3\pi/2$, where $\cC$ is the vertical path which is directed upwards, and suitably  
indented to separate the poles of the integrand at $w=-\ka-m$ $(m=0,1,\ldots)$ from those at 
$w=n$ and $w=1-\nu+n$ $(n=0,1,\ldots)$ (cf.~\cite[p.37, 3.1.2.(3.1.17)]{slater1960}); this is incorporated in each term on the right sides of (6.8), and then the order of the $w$-integral and the $l$-sum or $\xi$-integral is interchanged to assert, in view of (1.5), the following lemma.
\begin{lemma}
For any $m\in\bbZ$ the formula 
\begin{align*}
&\RL_{z;\ta}^{\al,\be}
(\ph^{\ast})^{(m)}(s+\ta,a,\la)
\tag{6.10}\\
&\quad=(-1)^m\vG{\al+\be}{\be}e^{-\vep(z)\pi i\al}I_1(s;z)
+(-1)^m\vG{\al+\be}{\al}e^{\vep(z)\pi i\be}I_2(s;z)
\end{align*}
holds with
\begin{align*}
\begin{split}
I_1(s;z)
&=\fr{1}{2\pi i}
\int_{\cC_1}\vG{\al+w,-w,1-\al-\be-w}{\al,1-\be}
\ph^{\ast}_{-w-m}(s,a,\la)(e^{-\vep(z)\pi i}z)^wdw,\\
I_2(s;z)
&=\fr{1}{2\pi i}
\int_{\cC_2}\vG{\be+w,-w,1-\al-\be-w}{\be,1-\al}\ph^{\ast}_{-w-m}(s+z,a,\la)z^wdw
\end{split}
\tag{6.11}
\end{align*}
in the sectors $0<|\arg z|<\pi$, where the paths $\cC_j$ $(j=1,2)$ are suitably indented to 
separate the poles of the integrand at $w=-\al-m$ $(m=0,1,\ldots)$ if $j=1$, and at $w=-\be-m$ 
$(m=0,1,\ldots)$ if $j=2$ respectively, from those at $w=n$ and $w=1-\al-\be-n$ $(n=0,1,\ldots)$. 
Here the integrals on the right sides of (6.11) converge absolutely for all $(s,z)$ with 
$s\in\mathbb{C}$ and $0<|\arg z|<\pi${\rm;} this provides the analytic continuations of the 
left side of (6.10) to the same regions of $(s,z)$. 
\end{lemma}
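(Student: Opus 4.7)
The plan is to derive the representation (6.10) by running the argument of Lemma~7 in parallel, but now starting from the split form (6.8). We begin from the termwise expansions in (6.8), which are valid for $\si>1$ and $0<|\arg z|<\pi$, having been obtained by applying the connection formula (6.7) to the $\t1F1{\al}{\al+\be}{-z\log(a+l)}$ and $\t1F1{\al}{\al+\be}{-z\log\xi}$ factors appearing in (6.2). Into each of the two $U$-function factors produced by (6.7) we then substitute the Mellin-Barnes representation (6.9). The hypothesis $|\arg Z|<3\pi/2$ for (6.9) is comfortably met in both pieces: for the $U(\al;\al+\be;e^{-\vep(z)\pi i}z\log(a+l))$ piece one has $|\arg Z|=|\arg z-\vep(z)\pi|<\pi$ since $\log(a+l)>0$ (using $a>1$), and for the $U(\be;\al+\be;z\log(a+l))$ piece one has $|\arg Z|=|\arg z|<\pi$; the same bounds hold with $\log\xi$ in place of $\log(a+l)$.

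Next I would interchange the order of the $w$-integration with the $l$-sum (for the $\ph$ part) and with the $\xi$-integral from $a$ to $+\infty$ (for the $\ps$ part). The justification is the usual Fubini argument: Stirling (5.5) gives the integrand of (6.9), with its three gamma factors in the numerator and two in the denominator, an exponential decay of type $e^{-(3\pi/2-|\arg Z|)|v|}$ together with a polynomial factor, and this combines absolutely with the Dirichlet series $\sum_l e(\la l)(a+l)^{-s-w-m}$ once the vertical path is chosen in a strip where $\re(s+w+m)>1$. Once the inner $l$-sum (resp.\ $\xi$-integral) has been carried out, one recognises it via (4.14), (4.13) and (1.5) as $\ph^{\ast}_{-w-m}(s,a,\la)$ in the first piece and as $\ph^{\ast}_{-w-m}(s+z,a,\la)$ in the second piece (the extra factor $(a+l)^{-z}$ in (6.8) contributes the shift $s\mapsto s+z$). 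Bending the vertical line obtained this way into the indented contour $\cC_j$ prescribed in the statement is then legitimate provided it does not cross any pole, which dictates exactly the separation condition stated.

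To complete the proof I would establish absolute convergence of the two integrals in (6.11) for all $s\in\mathbb{C}$ and all $z$ in the sectors $0<|\arg z|<\pi$, which also gives the claimed analytic continuation. Applying (5.5) to the gamma quotient and (4.12) to $\ph^{\ast}_{-w-m}$, the integrand of $I_1$ on the vertical part of $\cC_1$ with $\re w=u$ is bounded by
\begin{align*}
\ll e^{-|v|(3\pi/2-|\arg z-\vep(z)\pi|)}(|v|+1)^{-\re\be-u-1/2}(|v|+|t|+1)^{\max(0,\lf2-\si\rf)}|e^{-\vep(z)\pi i}z|^{u},
\end{align*}
and the exponent $3\pi/2-|\arg z-\vep(z)\pi|$ is strictly positive, indeed bounded below by $\pi/2+\et$, as soon as $\et\leq|\arg z|\leq\pi-\et$. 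The same estimate with $|t+y|$ and $|\lf2-\si-x\rf|$ replacing $|t|$ and $|\lf2-\si\rf|$, together with decay rate $3\pi/2-|\arg z|$, controls $I_2$.

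The main obstacle I anticipate is bookkeeping rather than conceptual: one must track the three sets of poles $w=-\al-m$ (resp.\ $w=-\be-m$), $w=n$, and $w=1-\al-\be-n$ through the Stokes-type rotation built into (6.7), ensure the contours $\cC_1,\cC_2$ can in fact be drawn as vertical lines with finite indentations (which is fine away from the exceptional case $\al-\be\in\mathbb{Z}$ handled separately in Theorem~6's remark), and verify that the path used after the Fubini step to recover $\ph^{\ast}$ lies in the strip of absolute convergence $\re(s+w+m)>1$, which can always be arranged by temporarily taking $\si$ large and invoking analytic continuation at the end.
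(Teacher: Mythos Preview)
Your proposal is correct and follows essentially the same approach as the paper: start from the split form (6.8), insert the Mellin--Barnes representation (6.9) for each $U$-factor, interchange the $w$-integral with the $l$-sum (resp.\ $\xi$-integral), identify the result via (4.14) as $\ph^{\ast}_{-w-m}(s,a,\la)$ and $\ph^{\ast}_{-w-m}(s+z,a,\la)$, and then verify absolute convergence of (6.11) by combining Stirling (5.5) with the vertical estimate (4.12), exactly reproducing the bounds (6.12). One small slip in your obstacles paragraph: the temporary convergence condition for the inner $l$-sum is simply $\si>1$ (and $\si+x>1$ for the second piece), not $\re(s+w+m)>1$, since the exponent of $(a+l)$ is $-s$ (resp.\ $-s-z$) with only a power of $\log(a+l)$ carrying the $w$-dependence; and the drawability of $\cC_1,\cC_2$ does not require $\al-\be\notin\mathbb{Z}$.
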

\begin{proof}
The integrands on the line $\re w=u$ in (6.11) are (apart from the poles) bounded respectively for $j=1,2$ as, by (4.12) and (5.5), 
\begin{align*}
\begin{split}
&\ll e^{-|v|\{3\pi/2-|\arg z-\vep(z)\pi|\}}(|v|+1)^{-\re\be-u-1/2}(|v|+|t|+1)^{\max(0,\lf2-\si\rf)}|z|^u,
\\
&\ll e^{-|v|(3\pi/2-|\arg z|)}(|v|+1)^{-\re\al-u-1/2}(|v|+|t+y|+1)^{\max(0,\lf2-\si-x\rf)}|z|^u,
\end{split}
\tag{6.12}
\end{align*}
which give the required absolute convergence.
\end{proof}
We can now move upon (6.12) the paths $\cC_j$ $(j=1,2)$ of the integrations in (6.11) both to the right, passing over the poles at $w=-\al-n$ $(n=0,1,\ldots,N_1-1)$ for $I_1(s;z)$,  and at $w=-\be-n$ $(n=0,1,\ldots,N_2-1)$ for $I_2(s;z)$ respectively. If $N_1\geq\lf\re\be\rf$ then $-\re\al-N_1<1-\re(\al+\be)$, while if $N_2\geq\lf\re\al\rf$ then $-\re\be-N_2<1-\re(\al+\be)$; this therefore shows that $\cC_j$ $(j=1,2)$ can be taken under these situations as the vertical straight paths $(u_{j,N_j}^-)$ $(j=1,2)$ respectively with 
\begin{align*}
\min(-\re\al-N_1,1-\re(\al+\be)&<u_{1,N_1}^-<-\re\al-N_1+1,\\
\min(-\re\be-N_2,1-\re(\al+\be))&<u_{2,N_2}^-<-\re\be-N_2+1. 
\end{align*}
We thus obtain for any $N_1\geq\lf\re\be\rf$ and $N_2\geq\lf\re\al\rf$ the expression (2.10) with
\begin{align*}
\begin{split}
R_{1,N_1}^{2,-}(s;z)
&=\fr{1}{2\pi i}
\int_{(u_{1,N_1}^-)}\vG{\al+w,-w,1-\al-\be-w}{\al,1-\be}\\
&\quad\times\ph^{\ast}_{-w-m}(s,a,\la)(e^{-\vep(z)\pi i}z)^wdw,\\
R_{2,N_2}^{2,-}(s;z)
&=\fr{1}{2\pi i}
\int_{(u_{2,N_2}^-)}\vG{\be+w,-w,1-\al-\be-w}{\be,1-\al}\\
&\quad\times\ph^{\ast}_{-w-m}(s+z,a,\la)z^wdw.
\end{split}
\tag{6.13}
\end{align*}

The remaining inequalities (2.11) and (2.12) follow similarly to the derivation of (2.6), where the integrals in (6.13) are estimated by using (4.12) and by applying Lemma~5 with (6.12) respectively; 
the proof of Theorem 4 is thus complete.
\end{proof}    
\section{Asymptotic expansions for the iterated transforms}
We shall prove Theorems~5--10 in this section. The following lemma is prepared for the proofs of Theorems~5~and~6. 
\begin{lemma}
For any real $c>0$, we have 
\begin{align*}
\LM_{z;\ta_2}^{\be}\LM_{\ta_2;\ta_1}^{\al}
e^{-c\ta_1}
&=\fr{1}{2\pi i}\int_{(u_0)}\vG{\al+w,\be+w,-w}{\al,\be}(cz)^wdw
\tag{7.1}
\end{align*}
for $|\arg z|<3\pi/2$ with a constant $u_0$ satisfying $\max(-\re\al,-\re\be)<u_0<0$. 
\end{lemma}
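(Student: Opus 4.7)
The plan proceeds in three stages: reduce the iterated transform to a one-dimensional integral, expand via Mellin-Barnes, and extend the resulting identity by analytic continuation. The widening of the sector from $|\arg z|<\pi$ to $|\arg z|<3\pi/2$ is delivered entirely by the third stage, thanks to the extra gamma factor produced by the second Laplace-Mellin.

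Under the temporary hypothesis $\re z>0$, Fubini converts the LHS of (7.1) into the double integral
\[
\fr{1}{\vGa(\al)\vGa(\be)}\int_0^\infty\!\!\int_0^\infty e^{-cz\ta_1\ta_2}\ta_1^{\al-1}\ta_2^{\be-1}e^{-\ta_1-\ta_2}\,d\ta_1 d\ta_2;
\]
the inner $\ta_1$-integral is a standard Euler integral, yielding the factor $\vGa(\al)(1+cz\ta_2)^{-\al}$, so the LHS collapses to $(1/\vGa(\be))\int_0^\infty(1+cz\ta_2)^{-\al}\ta_2^{\be-1}e^{-\ta_2}d\ta_2$, which is holomorphic in $z$ on $|\arg z|<\pi$. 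Next I would insert the Mellin-Barnes formula (5.2) with $Z=cz\ta_2$ and $-\re\al<u<0$, swap the $w$- and $\ta_2$-integrals (legitimate by absolute convergence), and recognize the inner $\ta_2$-integral as $\vGa(\be+w)$; this demands $u>-\re\be$, whence the choice $\max(-\re\al,-\re\be)<u_0<0$, producing (7.1) in the sector $|\arg z|<\pi$.

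For the enlargement to $|\arg z|<3\pi/2$, Stirling's formula (5.5) shows that the three numerator gammas $\vGa(\al+w)$, $\vGa(\be+w)$, $\vGa(-w)$ together yield a factor of magnitude $\asymp e^{-3\pi|v|/2}$ on $\re w=u_0$, while $(cz)^w$ is bounded by $e^{|v||\arg z|}|cz|^{u_0}$. Their product is therefore dominated by $e^{-|v|(3\pi/2-|\arg z|)}$ times a power of $|v|+1$, giving absolute convergence of the Mellin-Barnes integral throughout $|\arg z|<3\pi/2$ and hence its holomorphy there. Since the identity already holds on the sub-sector $|\arg z|<\pi$, analytic continuation delivers it on the full sector. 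The only conceptual subtlety, and the main obstacle, is that the original iterated transform converges merely for $\re z>0$; so (7.1) in the extended sector has to be read with the right side playing the role of the analytic continuation of the left, and the real content of the lemma is the book-keeping that shows the aperture $3\pi$ arises precisely from the three gamma factors in the numerator.
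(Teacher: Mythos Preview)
Your argument is correct. The reduction to the single integral $(1/\vGa(\be))\int_0^\infty(1+cz\ta_2)^{-\al}\ta_2^{\be-1}e^{-\ta_2}\,d\ta_2$, the insertion of (5.2), the interchange (justified by the choice $u_0>\max(-\re\al,-\re\be)$ together with the exponential decay of the gamma factors), and the final enlargement of the sector by Stirling are all sound. Your closing remark is also apt: in the range $\pi\le|\arg z|<3\pi/2$ the right side of (7.1) is to be understood as the analytic continuation of the left.

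The paper, however, takes a different route. Rather than performing the inner $\ta_1$-integral directly, it invokes the Bessel-kernel representation (3.2) of Proposition~1, writes $K_{\al-\be}(2\sqrt{\ta})$ via (3.1), inserts the Mellin--Barnes formula (7.2) for $e^{-cz\ta}$ instead of (5.2), and then evaluates the resulting $\ta$- and $\xi$-integrals in turn (the latter via the substitution $\xi\mapsto\sqrt{(1-\xi)/\xi}$, which produces a beta integral). Your approach is shorter and more elementary: it bypasses the Bessel function altogether and needs only the two standard identities $\int_0^\infty e^{-A\ta}\ta^{s-1}d\ta=\vGa(s)A^{-s}$ and (5.2). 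The paper's approach, by contrast, is organised around the kernel formulae of Propositions~1--3 and treats Lemmas~8,~10 and~13 in parallel; the cost is a somewhat longer computation here, but the benefit is a uniform template for the iterated transforms.
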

\begin{proof}
Suppose temprarily that $|\arg z|<\pi/2$. On the right side of (3.2) for $f(z)=e^{-cz}$, we use 
(3.1) and 
\begin{align*}
e^{-cz}=\fr{1}{2\pi i}\int_{(u_0)}\vGa(-w)(cz)^wdw\tag{7.2}
\end{align*}
with $\max(-\re\al,-\re\be)<u_0<0$, to find that the left side of (7.1) equals 
\begin{align*}
&\fr{1}{\vGa(\al)\vGa(\be)}\cdot\fr{1}{2\pi i}
\int_{(u)}\vGa(-w)(cz)^w\int_0^{\infty}\xi^{\be-\al-1}
\int_0^{\infty}\ta^{(\al+\be)/2+w-1}e^{-\sqrt{\ta}(\xi+1/\xi)}d\ta d\xi dw\\
&\quad=\fr{1}{\pi i}\int_{(u_0)}\vG{\al+\be+2w,-w}{\al,\be}(cz)^w
\int_0^{\infty}\xi^{2\be+2w-1}(1+\xi^2)^{-\al-\be-2w}d\xi dw, 
\end{align*}  
where the interchange of the order of the $\ta$- and $\xi$-integral is justified (by absolute convergence) with the choice of $u_0$ above, and the resulting inner  $\ta$-integral on the 
first line equals $2\vGa(\al+\be+2w)(\xi+1/\xi)^{-\al-\be-2w}$ for $\xi>0$. 
Furthermore, the last inner $\xi$-integral evaluated as 
$(1/2)\tvG{\al+w,\be+w}{\al+\be+2w}$, by changing the variable $\xi\mapsto\sqrt{(1-\xi)/\xi}$,  
again with the choice of $u_0$ above; this concludes (7.1). Here the temporary 
restriction on $z$ can be relaxed to $|\arg z|<3\pi/2$, since the integrand in (7.1) is of order 
$O\{e^{-|v|(3\pi/2-|\arg z|)}|v|^{\re(\al+\be)+2u_0-1}\}$ as $v\to\pm\infty$.
\end{proof}
Suppose temporarily that $\si>1$ and $|\arg z|<\pi/2$. We operate $\LM_{z;\ta_2}^{\be}\LM_{\ta_2;\ta_1}^{\al}$ term-by-term on the right sides of (4.13) upon (4.17), use (7.1) and then change the 
order of the $w$-integral and the $l$-sum or $\xi$-integral, to obtain, in view of (1.5), the 
following lemma.   
\begin{lemma}
The formula  
\begin{align*}
&\LM_{z;\ta_2}^{\be}\LM_{\ta_2;\ta_1}^{\al}(\ph^{\ast})^{(m)}(s+\ta_1,a,\la)
\tag{7.3}\\
&\quad=\fr{(-1)^m}{2\pi i}\int_{(u_0)}\vG{\al+w,\be+w,-w}{\al,\be}
\ph_{-w-m}^{\ast}(s,a,\la)z^wdw
\end{align*}
holds for any $m\in\mathbb{Z}$ with a constant $u_0$ satisfying $\max(-\re\al,-\re\be)<u_0<0$, where the integral on the right side converges absolutely for all $(s,z)$ with $s\in\mathbb{C}$ and 
$|\arg z|<3\pi/2${\rm;} this provides the analytic continuation of the left side to the same region 
of $(s,z)$. 
\end{lemma}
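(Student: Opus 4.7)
The plan is to mirror the strategy used for Lemma~5 (formula (5.4)) and Lemma~6 (formula (6.4)): derive the Mellin-Barnes representation first in a narrow region where every manipulation is justified by absolute convergence, and then propagate the identity to the full sector $|\arg z|<3\pi/2$ via the explicit vertical estimate of the integrand on the line $(u_0)$.

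First I would fix $\si>1$ and $|\arg z|<\pi/2$. In this regime the series and integral in (4.13) representing $\ph_r(s,a,\la)$ and $\ps_r(s,a)$ converge absolutely, and Lemma~3 (formula (4.17)) lets me replace $(\cdot)^{(m)}$ by $(-1)^m\I{-m}{s}$ under the iterated Laplace-Mellin operator. Applying $\LM_{z;\ta_2}^{\be}\LM_{\ta_2;\ta_1}^{\al}$ term-by-term to each of the two displays in (4.13), the action of the iterated operator on $e^{-c\ta_1}$ (with $c=\log(a+l)$ in the Lerch piece and $c=\log\xi$ in the $\ps$-piece) is furnished by Lemma~7, formula (7.1). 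This converts each term into a Mellin-Barnes integral along $(u_0)$ with $\max(-\re\al,-\re\be)<u_0<0$. I would then swap the $w$-integration with the $l$-summation, respectively $\xi$-integration; the swap is legitimate because, by Stirling's formula (5.5) applied to the three gamma factors, the gamma ratio in (7.1) decays like $e^{-3\pi|v|/2}(|v|+1)^{\re(\al+\be)+2u_0-1/2}$, while $\sum_l(a+l)^{-s}\log^{-u_0-m}(a+l)$ and $\int_a^\infty\xi^{-s}\log^{-u_0-m}\xi\,d\xi$ converge absolutely and uniformly in $v$ under $\si>1$. After the swap the inner sum and inner integral reassemble into $\ph_{-w-m}(s,a,\la)$ and $\ps_{-w-m}(s,a)$, and (1.5) combines them into the single factor $\ph^{\ast}_{-w-m}(s,a,\la)$, yielding (7.3) in the narrow region.

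To upgrade both sides to the full claimed region, I would estimate the integrand of (7.3) on $\re w=u_0$ exactly as was done for (5.6) and (6.5): combining the triple-gamma Stirling bound above with the vertical estimate (4.12) for $\ph^{\ast}_{-w-m}(s,a,\la)$ and with $|z^w|=|z|^{u_0}e^{-v\arg z}$, one obtains an integrand bounded by
\begin{align*}
e^{-|v|(3\pi/2-|\arg z|)}(|v|+1)^{\re(\al+\be)+2u_0-1/2}(|v|+|t|+1)^{\max(0,\lf 2-\si\rf)}|z|^{u_0},
\end{align*}
which is integrable in $v$ for every $s\in\mathbb{C}$ and every $z$ with $|\arg z|<3\pi/2$. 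Since both sides of (7.3) are holomorphic in $(s,z)$ on the larger region (the left side being holomorphic wherever the iterated transform makes sense, which on $|\arg z|<\pi/2$ covers the initial domain) and agree on the open subset $\{\si>1,\ |\arg z|<\pi/2\}$, analytic continuation forces equality throughout, and simultaneously provides the analytic continuation of the left side to $s\in\mathbb{C}$, $|\arg z|<3\pi/2$, as claimed.

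The main obstacle is the Fubini step: one must check that the absolute double integral (or integral-sum) is finite. This requires handling two decay mechanisms simultaneously—the exponential $e^{-3\pi|v|/2}$ coming from the three gamma factors and the $\si>1$ convergence of the Dirichlet series and of the $\xi$-integral—so that the interchange is uniform in the Mellin variable $w$. Once that is in place, the enlargement of the $z$-sector from $|\arg z|<\pi/2$ to $|\arg z|<3\pi/2$ is driven entirely by the surplus exponential decay $e^{-3\pi|v|/2}$ of the triple-gamma ratio (as opposed to the double-gamma decay $e^{-\pi|v|}$ that appeared in (5.6)), and not by any property of the original Laplace-Mellin integral.
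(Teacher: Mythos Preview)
Your proposal is correct and follows essentially the same route as the paper: restrict to $\si>1$, $|\arg z|<\pi/2$, apply the iterated operator term-by-term to (4.13) via (4.17) and (7.1), interchange the $w$-integral with the $l$-sum/$\xi$-integral, recombine through (1.5), and then enlarge the region by the Stirling bound on the integrand together with (4.12). The only discrepancy is the polynomial exponent in your final vertical estimate ($\re(\al+\be)+2u_0-1/2$ versus the paper's $\re(\al+\be)+u-1/2$ in (7.4)), but this is immaterial since the exponential factor $e^{-|v|(3\pi/2-|\arg z|)}$ governs integrability.
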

\begin{proof}
The integrand in (7.3) on the line $\re w=u$ is (apart from its poles) bounded, by (4.12) and 
(5.5), as 
\begin{align*}
\ll e^{-|v|(3\pi/2-|\arg z|)}(|v|+1)^{\re(\al+\be)+u-1/2}(|v|+|t|+1)^{\max(0,\lf 2-\si\rf)}|z|^u,
\tag{7.4}
\end{align*}
which asserts the required absolute convergence.
\end{proof}
We are now ready to prove Theorems~5~and~6.
\begin{proof}[Proof of Theorem~5]
The same path moving (to the right) argument as in the proof of Theorem~1 leads us to the expression (3.3) with 
\begin{align*}
R_{m,N}^{3,+}(s,a,\la;z)
&=\fr{(-1)^m}{2\pi i}\int_{(u_N^+)}\vG{\al+w,\be+w,-w}{\al,\be}
\ph_{-w-m}^{\ast}(s,a,\la)z^wdw
\tag{7.5}
\end{align*}
with $\max(-\re\al,N-1)<u_N^+<N$. The remaining inequality (3.4) follows similarly to the derivation 
of (2.9), by using (4.12) and applying Lemma~5 with (7.4). 
\end{proof}
\begin{proof}[Proof of Thoerem~6]
For any nonnegative integer $N$, let a real constant $u_N^-$ and an integer $N'$ satisfy 
\begin{align*}
-\re\al-N<u_N^-<-\re\be-N'\leq-\re\al-N+1,
\tag{7.6}
\end{align*} 
which gives $N'=N-\lf\re(\be-\al)\rf$. We can then move the path in (7.3) from $(u_0)$ to $(u_N^-)$, passing over the poles of the integrand at $w=-\al-n$ $(n=0,1,\ldots,N-1)$ and $w=-\be-n$ $(n=0,1,\ldots,N'-1)$, to obtain the expression (3.5) with 
\begin{align*}
R_{m,N}^{3,-}(s,a,\la;z)
&=\fr{(-1)^m}{2\pi i}\int_{(u_N^-)}\vG{\al+w,\be+w,-w}{\al,\be}
\ph^{\ast}_{-w-m}(s,a,\la)z^wdw
\tag{7.7}
\end{align*} 
for $|\arg z|<3\pi/2$, where each term of the asymptotic series is rewritten by 
\begin{align*}
\vGa(s-n)=(-1)^n\vGa(s)/(1-s)_n
\qquad(n\in\mathbb{Z}).
\tag{7.8}
\end{align*}

The remaining inequality (3.6) follows by moving further the path in (7.5) from $(u_N^-)$ 
to $(u_{N+1}^-)$; this gives
\begin{align*}
R_{m,N}^{3,-}(s,a,\la;z)
&=\fr{(-1)^N(\al)_N(\be)_N}{N!}\ph^{\ast}_{\al+N-m}(s,a,\la)z^{-\al-N}\\
&\quad+\fr{(-1)^{N'}(\al)_{N'}(\be)_{N'}}{N'!}\ph^{\ast}_{\be+N'-m}(s,a,\la)z^{-\be-N'}
+R_{m,N+1}^{3,-}(s,a,\la;z),
\end{align*}
where the first and second terms on the right side are estimated respectively as 
\begin{align*}
\ll(|t|+1)^{\max(0,\lf2-\si\rf)}|z|^{-\re\al-N}
\quad
\text{and}
\quad
\ll(|t|+1)^{\max(0,\lf2-\si)}|z|^{-\re\be-N'}
\end{align*}
by (4.12), while the third as $\ll(|t|+1)^{\max(0,\lf2-\si\rf)}|z|^{u_{N+1}^-}$ by applying Lemma~5 
with (7.4). The first term, in view of (7.6), exceeds other terms when $z\to\infty$ through $|\arg z|\leq3\pi/2-\et$ for any small $\et>0$; this concludes (3.6).
\end{proof}

We next proceed to prove Theorems~7~and~8. Prior to the proofs, the following lemma is prepared. 
\begin{lemma}
For any real $c>0$, we have 
\begin{align*}
\RL_{z;\ta_2}^{\be,\ga}\LM_{\ta_2;\ta_1}^{\al}e^{-c\ta_1}
&=\fr{1}{2\pi i}\int_{(u_0)}\vG{\al+w,\be+w,\be+\ga,-w}{\al,\be,\be+\ga+w}(cz)^wdw
\tag{7.9}
\end{align*} 
for $|\arg z|<\pi$ with a constant $u_0$ satisfying $\max(-\re\al,-\re\be)<u_0<0$. 
\end{lemma}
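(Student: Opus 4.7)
The plan is to start from the single-integral representation of the iterated transform given by Proposition~2, substitute the Mellin-Barnes expansion of the exponential kernel, and reduce everything to a single Mellin-Barnes integral by evaluating the resulting inner $\ta$-integral in closed form.

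First I would restrict temporarily to the sector $|\arg z|<\pi/2$ and apply~(3.9) with $f(\ta_1)=e^{-c\ta_1}$ to write
\begin{align*}
\RL_{z;\ta_2}^{\be,\ga}\LM_{\ta_2;\ta_1}^{\al}e^{-c\ta_1}
=\vG{\be+\ga}{\al,\be}\int_0^{\infty}e^{-cz\ta}\ta^{\al-1}e^{-\ta}U(\ga;\al-\be+1;\ta)d\ta.
\end{align*}
I would then substitute~(7.2) for $e^{-cz\ta}$ on a vertical line $(u_0)$ with $\max(-\re\al,-\re\be)<u_0<0$, and after justifying the interchange of the $w$- and $\ta$-integrals by absolute convergence arrive at
\begin{align*}
\vG{\be+\ga}{\al,\be}\cdot\fr{1}{2\pi i}\int_{(u_0)}\vGa(-w)(cz)^w
\int_0^{\infty}\ta^{\al+w-1}e^{-\ta}U(\ga;\al-\be+1;\ta)d\ta\,dw.
\end{align*}

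The crucial step is the evaluation of the inner $\ta$-integral. Inserting the Euler representation~(3.8) for $U$, interchanging the $\xi$- and $\ta$-integrals, and using
\begin{align*}
\int_0^{\infty}\ta^{\al+w-1}e^{-\ta(1+\xi)}d\ta=\fr{\vGa(\al+w)}{(1+\xi)^{\al+w}},
\end{align*}
I would obtain
\begin{align*}
\int_0^{\infty}\ta^{\al+w-1}e^{-\ta}U(\ga;\al-\be+1;\ta)d\ta
=\fr{\vGa(\al+w)}{\vGa(\ga)}\int_0^{\infty}\xi^{\ga-1}(1+\xi)^{-\be-\ga-w}d\xi.
\end{align*}
The remaining Beta-type integral evaluates to $\vGa(\ga)\vGa(\be+w)/\vGa(\be+\ga+w)$, and assembling the factors produces exactly the right-hand side of~(7.9). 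The constraints $\re w>-\re\al$ and $\re w>-\re\be$ guaranteed by the stated range of $u_0$, together with $\re\ga>0$, are precisely what is needed to secure absolute convergence of the inner $\ta$- and $\xi$-integrals and to validate both Fubini exchanges.

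Finally, Stirling's formula~(5.5) shows that the four gamma factors in the Mellin-Barnes integrand contribute an exponential decay of order $e^{-\pi|v|}$ along $(u_0)$, while $|(cz)^w|$ grows like $e^{-v\arg(cz)}$; combining these gives decay of order $e^{-|v|(\pi-|\arg(cz)|)}$, so the right-hand side of~(7.9) is holomorphic throughout $|\arg z|<\pi$. Analytic continuation therefore relaxes the temporary restriction $|\arg z|<\pi/2$ to the full sector claimed. The main technical obstacle is the careful and uniform justification of the two Fubini exchanges; this is where the double inequality $\max(-\re\al,-\re\be)<u_0<0$ is indispensable, since it simultaneously controls the Mellin kernel at infinity on $(u_0)$ and the singular behavior of the inner integrand at $\ta=0$ and $\xi=\infty$.
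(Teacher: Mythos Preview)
Your proposal is correct, but the route differs from the paper's. The paper first evaluates the iterated transform in \emph{closed form}: starting from (3.9), it inserts the integral representation (3.8) for $U$, interchanges the $\xi$- and $\ta$-integrals, evaluates the resulting $\ta$-integral as $\vGa(\al)(1+\xi+cz)^{-\al}$, and after the substitution $\xi\mapsto(1-\xi)/\xi$ recognizes Euler's integral (3.17), obtaining
\[
\RL_{z;\ta_2}^{\be,\ga}\LM_{\ta_2;\ta_1}^{\al}e^{-c\ta_1}=\F{\al,\be}{\be+\ga}{-cz}
\]
(formula (7.10)). It then simply quotes the Mellin--Barnes representation (7.11) of ${}_2F_1$ to arrive at (7.9). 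You instead insert the Mellin--Barnes expansion (7.2) of the exponential at the outset and reduce the inner $\ta$- and $\xi$-integrals directly to gamma and Beta integrals. Your approach is somewhat more self-contained, since it bypasses the classical identity (7.11); the paper's approach, on the other hand, isolates the intermediate closed form (7.10), which is of independent interest and is reused verbatim in (7.12) to set up the proof of Theorem~8.
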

\begin{proof}
Suppose temporarily that $|\arg z|<\pi/2$. Then the left side of (3.9) for $f(z)=e^{-cz}$ 
equals, by using (3.8) on the right side of (3.9),  
\begin{align*}
&\vG{\be+\ga}{\al,\be,\ga}\int_0^{\infty}e^{-(1+cz)\ta}\ta^{\al-1}
\int_0^{\infty}e^{-\ta\xi}\xi^{\ga-1}(1+\xi)^{\al-\be-\ga}d\xi d\ta\\
&\quad=\vG{\be+\ga}{\be,\ga}
\int_0^{\infty}\xi^{\ga-1}(1+\xi)^{\al-\be-\ga}(1+\xi+cz)^{-\al}d\xi
\end{align*}
where the interchange the order of the $\xi$- and $\ta$-integral on the first line is justified 
by absolute convergence; the resulting inner $\ta$-integral is evaluated as 
$\vGa(\al)(1+\xi+cz)^{-\al}$. Here the last $\xi$-integral is transformed by changing the 
variable $\xi\mapsto(1-\xi)/\xi$ to show upon (3.17) that 
\begin{align*}
\RL_{z;\ta_2}^{\be,\ga}\LM_{\ta_2;\ta_1}^{\al}e^{-c\ta_1}
&=\F{\al,\be}{\be+\ga}{-cz},
\tag{7.10}
\end{align*}
where the temporary restriction on $z$ can be relaxed to $|\arg z|<\pi$ by analytic continuation. 
The Mellin-Barnes formula 
\begin{align*}
\F{\ka,\mu}{\nu}{Z}
&=\fr{1}{2\pi i}\int_{(u)}\vG{\ka+w,\mu+w,\nu,-w}{\ka,\mu,\nu+w}(-Z)^wdw
\tag{7.11}
\end{align*}
for $|\arg(-Z)|<\pi$ with $\max(-\re\ka,-\re\mu)<u<0$ (cf.~\cite[p.62, 2.1.3(15)]{erdelyi1953a}) therefore concludes (7.9). 
\end{proof}

Suppose temporarily that $\si>1$ and $|\arg z|<\pi/2$. We operate 
$\RL_{z;\ta_2}^{\be,\ga}\LM_{\ta_2;\ta_1}^{\al}$ term-by-term on the right sides of (4.13), note (4.17), and then use (3.17) in each resulting term, to find for any $m\in\mathbb{Z}$ that 
\begin{align*}
\begin{split}
\RL_{z;\ta_2}^{\be,\ga}\LM_{\ta_2;\ta_1}^{\al}\ph^{(m)}(s+\ta_1,a,\la)
&=(-1)^m\sum_{l=0}^{\infty}e(\la l)(a+l)^{-s}\log^m(a+l)\\
&\quad\times\F{\al,\be}{\be+\ga}{-z\log(a+l)},\\
\RL_{z;\ta_2}^{\be,\ga}\LM_{\ta_2;\ta_1}^{\al}\ps^{(m)}(s+\ta_1,a)
&=(-1)^m\int_a^{\infty}\xi^{-s}\log^m\xi\cdot\F{\al,\be}{\be+\ga}{-z\log\xi}d\xi,
\end{split}
\tag{7.12}
\end{align*} 
in which (7.11) is incorporated to assert, in view of (1.5), the following lemma.
\begin{lemma}
The formula
\begin{align*}
&\RL_{z;\ta_2}^{\be,\ga}\LM_{\ta_2;\ta_1}^{\al}
(\ph^{\ast})^{(m)}(s+\ta_1,a,\la)\tag{7.13}\\
&\quad=\fr{(-1)^m}{2\pi i}
\int_{(u_0)}\vG{\al+w,\be+w,\be+\ga,-w}{\al,\be,\be+\ga+w}\ph^{\ast}_{-w-m}(s,a,\la)z^wdw
\end{align*}  
holds for any $m\in\mathbb{Z}$ with a constant $u_0$ satisfying $\max(-\re\al,-\re\be)<u_0<0$, where the integral on the right side converges absolutely for all $(s,z)$ with $s\in\mathbb{C}$ and $|\arg z|<\pi${\rm;} this provides the analytic continuation of the left side to the same region of $(s,z)$.
\end{lemma}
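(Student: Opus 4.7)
The identity (7.13) itself is essentially laid out in the paragraph preceding the statement: starting from (4.13), the operator $\RL_{z;\ta_2}^{\be,\ga}\LM_{\ta_2;\ta_1}^{\al}$ is applied termwise to the Dirichlet series defining $\ph$ and to the integral representation of $\ps$, each resulting term is evaluated via (7.10) as a Gauss hypergeometric function, the Mellin-Barnes representation (7.11) is inserted, and the order of the $w$-integral with the $l$-sum or the $\xi$-integral is interchanged, with (1.5) converting $\ph$ into $\ph^{\ast}$ at the end. The task of the proof is therefore to establish the absolute convergence of the Mellin-Barnes integral on the right of (7.13) throughout $s\in\mathbb{C}$ and $|\arg z|<\pi$; this simultaneously justifies each interchange carried out during the derivation (originally valid only under the temporary assumption $\si>1$ and $|\arg z|<\pi/2$) and extends the identity to the full claimed region by the identity theorem.

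First I would estimate the integrand on the vertical line $\re w=u$ via Stirling's formula (5.5). The three numerator gammas $\vGa(\al+w)$, $\vGa(\be+w)$, $\vGa(-w)$ each contribute an exponential factor $e^{-\pi|v|/2}$, while the reciprocal $1/\vGa(\be+\ga+w)$ contributes $e^{+\pi|v|/2}$, so that the net exponential behaviour in $|v|$ is only $e^{-\pi|v|}$, as opposed to $e^{-3\pi|v|/2}$ in the iterated Laplace-Mellin case of Lemma~9. Combined with $|z^w|=|z|^u e^{-v\arg z}$, this yields the overall factor $e^{-|v|(\pi-|\arg z|)}$, which is precisely the decay responsible for convergence in the sector $|\arg z|<\pi$ stated in the lemma. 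I would then apply the vertical bound (4.12) to $\ph^{\ast}_{-w-m}(s,a,\la)$, noting that $m\in\mathbb{Z}$ is real so that $\im(-w-m)=-v$.

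Putting these together, the integrand is to be bounded on $\re w=u$ by
\[
\ll e^{-|v|(\pi-|\arg z|)}(|v|+1)^{\mu}(|v|+|t|+1)^{\max(0,\lf 2-\si\rf)}|z|^u,
\]
where $\mu$ is a routine linear combination of $\re\al$, $\re\be$, $\re\ga$ and $u$ collected from the Stirling exponents. Restricting to any closed subsector $|\arg z|\le\pi-\et$, the $v$-integral of this bound is finite; applying Lemma~5 to absorb the $(|v|+|t|+1)$-factor then yields an overall estimate of order $(|t|+1)^{\max(0,\lf 2-\si\rf)}|z|^u$, with an implied constant depending at most on $\si$, $u$, $\et$ and the remaining parameters. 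Standard continuity and holomorphy under the integral sign finally exhibit the right-hand side of (7.13) as an entire function of $s$ holomorphic in $z$ for $|\arg z|<\pi$, completing the analytic continuation.

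The main (and essentially only) obstacle is a bookkeeping check rather than a substantive one: one must verify carefully that the exponential growth $e^{+\pi|v|/2}$ from $1/\vGa(\be+\ga+w)$ exactly offsets one of the three numerator decays $e^{-\pi|v|/2}$, so that the resulting sector of validity comes out as $|\arg z|<\pi$ and not wider (nor narrower). Beyond this cancellation, the argument runs in strict parallel to the proofs of the Mellin-Barnes representations already given in Lemmas~6 and~9.
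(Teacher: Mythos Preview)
Your proposal is correct and follows essentially the same line as the paper's own proof: the paper simply records the Stirling-plus-(4.12) bound
\[
\ll e^{-|v|(\pi-|\arg z|)}(|v|+1)^{\re(\al-\ga)-1}(|v|+|t|+1)^{\max(0,\lf2-\si\rf)}|z|^u
\]
and notes that this yields absolute convergence. The only cosmetic difference is that the paper computes the polynomial exponent explicitly as $\re(\al-\ga)-1$, whereas you leave it as a generic $\mu$; this does not affect the argument.
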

\begin{proof}
The integrand in (7.13) on the line $\re w=u$  is (apart from its poles) bounded, by (4.12) and (5.5), as 
\begin{align*}
\ll e^{-|v|(\pi-|\arg z|)}(|v|+1)^{\re(\al-\ga)-1}(|v|+|t|+1)^{\max(0,\lf2-\si\rf)}|z|^u,
\tag{7.14}
\end{align*}
which gives the required absolute convergence.
\end{proof}

We are now ready to prove Theorems~7~and~8.
\begin{proof}[Proof of Theorem~7]
The same path moving (to the right) argument as in the proof of Theorem~1 leads us to the expression (3.10) with
\begin{align*}
R_{m,N}^{4,+}(s,a,\la;z)
&=\fr{(-1)^m}{2\pi i}\int_{(u_N^+)}\vG{\al+w,\be+w,\be+\ga,-w}{\al,\be,\be+\ga+w}
\tag{7.15}\\
&\quad\times\ph_{-w-m}^{\ast}(s,a,\la)z^wdw
\end{align*} 
for $|\arg z|<\pi$ with $\max(-\re\al,N-1)<u_N^+<N$. The remaining estimate follows 
similarly to the derivation of (2.9) by using (4.12) and applying Lemma 5 with (7.14). 
\end{proof}
\begin{proof}[Proof of Theorem~8]
The connection formula
\begin{align*}
\F{\ka,\mu}{\nu}{Z}
&=\vG{\nu,\mu-\ka}{\mu,\nu-\ka}(-Z)^{-\al}\F{\ka,1-\nu+\ka}{1-\mu+\ka}{\fr{1}{Z}}
\tag{7.16}\\
&\quad+\vG{\nu,\ka-\mu}{\ka,\nu-\mu}(-Z)^{-\be}\F{\mu,1-\nu+\mu}{1-\ka+\mu}{\fr{1}{Z}}
\end{align*}
holds for $|\arg(-Z)|<\pi$ and any complex $\ka$, $\mu$ and $\nu$ with $\nu\notin\mathbb{Z}_{\leq0}$ and $\ka-\mu\notin\mathbb{Z}$ (cf.~\cite[p.108, 2.10(2)]{erdelyi1953a}); this is used in each term on the right sides of (7.12) to assert that
\begin{align*}
\begin{split}
&\RL_{z;\ta_2}^{\be,\ga}\LM_{\ta_2;\ta_1}^{\al}\ph^{(m)}(s+\ta_1,a,\la)\\
&\quad=(-1)^m\sum_{l=0}^{\infty}e(\la l)(a+l)^{-s}\log^m(a+l)\\
&\qquad\times\biggl[\vG{\be+\ga,\be-\al}{\be,\be+\ga-\al}\{z\log(a+l)\}^{-\al}
\F{\al,1-\be-\ga+\al}{1-\be+\al}{\fr{1}{z\log(a+l)}}\\
&\qquad+\vG{\be+\ga,\al-\be}{\al,\ga}\{z\log(a+l)\}^{-\be}
\F{\be,1-\ga}{1-\al+\be}{\fr{1}{z\log(a+l)}}\biggr],\\
&\RL_{z;\ta_2}^{\be,\ga}\LM_{\ta_2;\ta_1}^{\al}\ps^{(m)}(s+\ta_1,a)\\
&\quad=(-1)^m\int_a^{\infty}\xi^{-s}\log^m\xi\\
&\qquad\times\biggl\{\vG{\be+\ga,\be-\al}{\be,\be+\ga-\al}(z\log\xi)^{-\al}
\F{\al,1-\be-\ga+\al}{1-\be+\al}{\fr{1}{z\log\xi}}\\
&\qquad+\vG{\be+\ga,\al-\be}{\al,\ga}(z\log\xi)^{-\be}
\F{\be,1-\ga}{1-\al+\be}{\fr{1}{z\log\xi}}\biggr\}d\xi
\end{split}
\tag{7.17}
\end{align*}
for any $m\in\mathbb{Z}$ and $|\arg z|<\pi$. 
\end{proof}
We incorporate (7.11) in each term on the right side of (7.17), and then chang the order of the 
$w$-integral and the $l$-sum or $\xi$-integral, we obtain, in view of (1.5), the following lemma.
\begin{lemma}
The formula
\begin{align*}
&\RL_{z;\ta_2}^{\be,\ga}\LM_{\ta_2;\ta_1}^{\al}(\ph^{\ast})^{(m)}(s+\ta_1,a,\la)
\tag{7.18}\\
&\quad=(-1)^m\vG{\be+\ga,\be-\al}{\be,\be+\ga-\al}J_1(s;z)
+(-1)^m\vG{\be+\ga,\al-\be}{\al,\ga}J_2(s;z)
\end{align*}
holds with
\begin{align*}
\begin{split}
J_1(s;z)
&=\fr{1}{2\pi i}\int_{\cC_1}
\vG{\al+w,1-\be-\ga+\al+w,1-\be+\al,-w}{\al,1-\be-\ga+\al,1-\be+\al+w}\\
&\quad\times\ph^{\ast}_{\al+w-m}(s,a,\la)z^{-\al-w}dw,\\
J_2(s;z)
&=\fr{1}{2\pi i}\int_{\cC_2}
\vG{\be+w,1-\ga+w,1-\al+\be,-w}{\be,1-\ga,1-\al+\be+w}\\
&\quad\times\ph^{\ast}_{\be+w-m}(s,a,\la)z^{-\be-w}dw
\end{split}
\tag{7.19}
\end{align*}
for any $m\in\mathbb{Z}$, where the paths $\cC_j$ $(j=1,2)$ are direcetd upwards and suitably indented to separate the poles of the integrand at $w=n$ $(n=0,1,\ldots)$ from those at $w=-\al-n$ and $w=-\al+\be+\ga-1-n$ $(n=0,1,\ldots)$ for $J_1(s;z)$, while at $w=n$ $(n=0,1,\ldots)$ from those at $w=-\be-n$ and $w=\ga-1-n$ $(n=0,1,\ldots)$ for $J_2(s;z)$. Here the integrals on the right 
sides of (7.19) converges absolutely for all $(s,z)$ with $s\in\mathbb{C}$ and $|\arg z|<\pi${\rm;} 
this provides the analytic continuations of the left side of (7.18) to the same region of $(s,z)$.
\end{lemma}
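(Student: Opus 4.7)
The plan is to obtain (7.18) from (7.17) by applying the Mellin-Barnes representation (7.11) to each of the two ${}_2F_1$ factors appearing there, and then interchanging the resulting $w$-integral with the outer $l$-sum or $\xi$-integral, exactly in parallel with the proofs of Lemmas 7, 8, and 9.

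Applying (7.11) to the first bracketed hypergeometric in (7.17) with parameters $\ka=\al$, $\mu=1-\be-\ga+\al$, $\nu=1-\be+\al$ and argument $Z=1/\{z\log(a+l)\}$ (respectively $Z=1/(z\log\xi)$ for the $\ps$ contribution), and combining with the prefactor $\{z\log(a+l)\}^{-\al}$, produces a Mellin-Barnes integral whose $w$-th slice contains the factor $\{z\log(a+l)\}^{-\al-w}$ up to a phase $(-1)^w$ that can be absorbed by an appropriate rotation of the contour. After interchanging the order of the $w$-integral with the outer summation or integration (justified, for $\si>1$ temporarily, by absolute convergence), the inner $l$-sum and $\xi$-integral are recognized via (4.14) with index $r=\al+w-m$ as $\ph_{\al+w-m}(s,a,\la)$ and $\ps_{\al+w-m}(s,a)$, which combine through (1.5) into $\ph^{\ast}_{\al+w-m}(s,a,\la)$. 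An identical procedure applied to the second bracketed term with $\ka=\be$, $\mu=1-\ga$, $\nu=1-\al+\be$ yields $\ph^{\ast}_{\be+w-m}(s,a,\la)$ in the corresponding slot. The resulting initial vertical paths are then indented as described to produce $\cC_1$ and $\cC_2$ separating the two stated classes of poles.

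To justify absolute convergence of the integrals in (7.19) on any admissible vertical line $\re w = u$, I would combine Stirling's formula (5.5) for the quartet of gamma factors with the vertical estimate (4.12) for the auxiliary zeta-function, yielding for the $J_1$ integrand the bound
\begin{align*}
\ll e^{-|v|(\pi-|\arg z|)}(|v|+1)^{\re(\al-\ga)-1}(|v|+|t|+1)^{\max(0,\lf 2-\si\rf)}|z|^{-\re\al-u},
\end{align*}
and an analogous bound for $J_2$ with $\al$ replaced by $\be$. Both are integrable in $v$ and locally uniform in $(s,z)$, which delivers the claimed absolute convergence and hence the analytic continuation of (7.18) to all $s\in\mathbb{C}$ and $|\arg z|<\pi$.

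The main obstacle will be the careful bookkeeping of the phase factor $(-1)^w=e^{\pm\pi i w}$ produced by (7.11) when applied with $Z=1/\{z\log(a+l)\}$: since $\log(a+l)$ is positive real while $z$ may be complex, the branch of $(-Z)^w$ must be tracked consistently through the contour step so that the final integrand exhibits precisely $z^{-\al-w}$ (resp.\ $z^{-\be-w}$) as displayed in (7.19), with no stray unit-modulus factor. Once this is settled, the remainder of the argument is routine and follows the templates of Lemmas 8 and 9 verbatim.
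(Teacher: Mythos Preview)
Your proposal is correct and follows exactly the paper's route: incorporate the Mellin--Barnes representation (7.11) into each hypergeometric term of (7.17), interchange the $w$-integral with the $l$-sum or $\xi$-integral, recognize the result as $\ph^{\ast}_{\al+w-m}$ (resp.\ $\ph^{\ast}_{\be+w-m}$) via (4.14) and (1.5), and then verify absolute convergence by combining Stirling (5.5) with the vertical estimate (4.12). Your polynomial exponent $\re(\al-\ga)-1$ for $J_1$ is in fact the correct one (the paper's (7.20) records $\re\al-1$ there, evidently a slip), while for $J_2$ the exponent is again $\re(\al-\ga)-1$ rather than what ``$\al\mapsto\be$'' would give---but since the exponential factor $e^{-|v|(\pi-|\arg z|)}$ controls convergence, this discrepancy is immaterial.
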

\begin{proof}
The integrands in (7.19) on the line $\re w=u$ are (apart from their poles) bounded respectively for $j=1,2$ as, by (4.12) and (5.5),   
\begin{align*}
\begin{split}
&\ll e^{-|v|(\pi-|\arg z|)}(|v|+1)^{\re\al-1}(|v|+|t|+1)^{\max(0,\lf2-\si\rf)}|z|^{-\re\al-u},\\
&\ll e^{-|v|(\pi-|\arg z|)}(|v|+1)^{\re(\al-\ga)-1}(|v|+|t|+1)^{\max(0,\lf2-\si\rf)}|z|^{-\re\be-u},
\end{split}
\tag{7.20}
\end{align*}
which give the required absolute convergence.
\end{proof}

We now move the paths $\cC_j$ $(j=1,2)$ in (7.19) to the right, passing over the poles of the integrand at $w=n$ $(n=0,1,\ldots,N_1-1)$ for $J_1(s;z)$, while at $w=n$ $(n=0,1,\ldots,N_2-1)$ for $J_2(s;z)$. 
If $N_1\geq\lf\re(\be+\ga-\al)\rf$ then $\re(\be+\ga-\al)-1<N_1$, while if $N_2\geq\lf\re\ga\rf$ then $\re\ga-1<N_2$; this shows that $\cC_j$ $(j=1,2)$ can be taken under these situations as the 
vertical straight lines $(u_{j,N_j})$ respectively for $j=1,2$ with 
\begin{align*}
\begin{split}
\max(N_1-1,\re(\be+\ga-\al))&<u_{1,N_1}<N_1,\\
\max(N_2-1,\re\ga-1)&<u_{2,N_2}<N_2.
\end{split}
\tag{7.21}
\end{align*}   
We thus obtain for any $N_1\geq\lf\re(\be+\ga-\al)\rf$ and $N_2\geq\lf\re\ga-1\rf$ the 
expression (3.12) with 
\begin{align*}
\begin{split}
R_{m,N_1}^{4,-}(s,a,\la;z)
&=\fr{1}{2\pi i}\int_{(u_{1,N_1})}
\vG{\al+w,1-\be-\ga+\al+w,1-\be-\al,-w}{\al,1-\be-\ga+\al,1-\be+\al+w}\\
&\quad\times\ph^{\ast}_{\al+w-m}(s,a,\la)z^{-\al-w}dw,\\
R_{m,N_2}^{4,-}(s,a,\la;z)
&=\fr{1}{2\pi i}\int_{(u_{2,N_2})}
\vG{\be+w,1-\ga+w,1-\al+\be,-w}{\be,1-\ga,1-\al+\be+w}\\
&\quad\times\ph^{\ast}_{\be+w-m}(s,a,\la)z^{-\be-w}dw.
\end{split}
\tag{7.22}
\end{align*}

The remaining inequalities in (3.13) follow similarly to the derivation of (2.11) and (2.12), by using (4.12) and by applying Lemma~5 with (7.20); the proof of Theorem~8 is thus complete.

We finally proceed to prove Theorems~9~and~10. Prior to the proofs the following lemma is prepared. 
\begin{lemma}
For any real $c>0$, we have  
\begin{align*}
\RL_{z;\ta_2}^{\ga,\de}\RL_{\ta_2;\ta_1}^{\al,\be}e^{-c\ta_1}
=\fr{1}{2\pi i}
\int_{(u)}\vG{\al+w,\ga+w,\al+\be,\ga+\de,-w}{\al,\ga,\al+\be+w,\ga+\de+w}(cz)^wdw
\tag{7.23}
\end{align*} 
for $|\arg z|<\pi/2$ with a constant $u$ satisfying $\max(-\re\al,-\re\ga)<u<0$.
\end{lemma}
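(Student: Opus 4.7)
The plan is to mirror the approach of Lemmas 7 and 9: first evaluate the inner transform $\RL_{\ta_2;\ta_1}^{\al,\be}e^{-c\ta_1}$ in closed form as a Kummer confluent hypergeometric function, then apply the outer transform $\RL_{z;\ta_2}^{\ga,\de}$, and finally recognize the result as the Mellin-Barnes integral on the right of (7.23). By (1.3) and Euler's integral formula (6.1) applied with $(\ka,\nu,Z)=(\al,\al+\be,-c\ta_2)$, one immediately has
\begin{align*}
\RL_{\ta_2;\ta_1}^{\al,\be}e^{-c\ta_1}
=\fr{\vGa(\al+\be)}{\vGa(\al)\vGa(\be)}\int_0^1 e^{-c\ta_2\xi}\xi^{\al-1}(1-\xi)^{\be-1}d\xi
=\1F1{\al}{\al+\be}{-c\ta_2}.
\end{align*}

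Next I would apply $\RL_{z;\ta_2}^{\ga,\de}$ to both sides, expand the ${}_1F_1$-series (which converges uniformly on the compact interval $[0,1]$), and integrate term-by-term against the Beta density $\ta_2^{\ga-1}(1-\ta_2)^{\de-1}$ to obtain
\begin{align*}
\RL_{z;\ta_2}^{\ga,\de}\RL_{\ta_2;\ta_1}^{\al,\be}e^{-c\ta_1}
=\sum_{k=0}^{\infty}\fr{(\al)_k(\ga)_k}{(\al+\be)_k(\ga+\de)_kk!}(-cz)^k,
\end{align*}
which is entire in $z$. I would then identify this series with the Mellin-Barnes integral on the right of (7.23): for any $u$ with $\max(-\re\al,-\re\ga)<u<0$, the contour $(u)$ separates the poles of $\vGa(\al+w)\vGa(\ga+w)$ from those of $\vGa(-w)$, so closing it to the right and collecting residues at $w=k$ $(k=0,1,\ldots)$ reproduces the displayed series, by the same residue argument used to establish (6.3) and (7.11).

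The principal technical point is securing absolute convergence of the Mellin-Barnes integral on $\re w=u$, which dictates the domain restriction $|\arg z|<\pi/2$. By Stirling's formula (5.5), the three numerator gamma factors $\vGa(\al+w)\vGa(\ga+w)\vGa(-w)$ contribute an exponential decay of order $e^{-3\pi|v|/2}$, while the two denominator factors $\vGa(\al+\be+w)\vGa(\ga+\de+w)$ contribute growth of order $e^{\pi|v|}$; combined with $|(cz)^w|=|cz|^u e^{-v\arg z}$, the integrand on $\re w=u$ is majorized by $e^{-|v|(\pi/2-|\arg z|)}$ times a power of $|v|+1$, ensuring integrability precisely when $|\arg z|<\pi/2$ as claimed.
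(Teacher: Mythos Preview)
Your argument is correct and takes a genuinely different, more elementary route than the paper's. The paper starts from Proposition~3, i.e.\ the integral representation (3.16) involving $\tF{\al+\be-\ga,\de}{\be+\de}{1-\ta}$, substitutes (7.2) for $e^{-cz\ta}$, and is then left with the auxiliary integral $G(\al,\be,\ga,\de;w)$ in (7.26), which it evaluates by Kummer's transformation plus Barnes' first lemma to reach (7.28). Your approach instead computes the inner transform directly as $\t1F1{\al}{\al+\be}{-c\ta_2}$ via (6.1), then integrates the entire power series term-by-term against the Beta density to produce the ${}_2F_2$-series $\sum_{k\geq0}\fr{(\al)_k(\ga)_k}{(\al+\be)_k(\ga+\de)_kk!}(-cz)^k$, and finally invokes the standard Mellin--Barnes representation for ${}_2F_2$. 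This is exactly parallel to the paper's own proof of Lemma~10, where (7.10) identifies $\RL^{\be,\ga}\LM^{\al}e^{-c\ta_1}$ with $\tF{\al,\be}{\be+\ga}{-cz}$ and then quotes (7.11); it is curious that the paper did not follow the same pattern here. Your route avoids both Proposition~3 and Barnes' first lemma entirely, at the modest cost of appealing to the Mellin--Barnes formula for ${}_2F_2$ (which is standard but not stated explicitly in the paper). The paper's route, by contrast, stays within the hypergeometric machinery it has already set up and illustrates a use of (3.16), but is noticeably longer.
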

\begin{proof}
Suppose temporarily that $|\arg z|<\pi/2$ and  
\begin{align*}
\re\ga<\re(\al+\be).
\tag{7.24} 
\end{align*}
Using (7.2) on the right side of (3.16) for $f(z)=e^{-cz}$, and then changing the order of the 
$\ta$- and $w$-integral, we find that
\begin{align*}
\RL_{z;\ta_2}^{\ga,\de}\RL_{\ta_2;\ta_1}^{\al,\be}
e^{-c\ta_1}
&=\vG{\al+\be,\ga+\de}{\al,\ga,\be+\de}
\cdot\fr{1}{2\pi i}\int_{(u_0)}\vGa(-w)G(\al,\be,\ga,\de;w)(cz)^wdw,
\tag{7.25}
\end{align*}  
say, where 
\begin{align*}
G(\al,\be,\ga,\de;w)
&=\int_0^1\ta^{\al+w-1}(1-\ta)^{\be+\de-1}\F{\al+\be-\ga,\de}{\be+\de}{1-\ta}d\ta,
\tag{7.26}
\end{align*}
and a constant $u_0$ is chosen as 
\begin{align*}
\max(-\re\al,-\re\ga)<u<0. 
\tag{7.27}
\end{align*}
It can be shown just below that  
\begin{align*}
G(\al,\be,\ga,\de;w)
&=\vG{\al+w,\ga+w,\be+\de,}{\al+\be+w,\ga+\de+w},\tag{7.28}
\end{align*}
which is substituted into the integrand in (7.25) to conclude (7.23); the temporary 
restriction (7.23) can be relaxed by analytic continuation. 

We now proceed to prove (7.28). Rewriting the hypergeometric function in (7.26) by 
Kummer's transformation 
\begin{align*}
\F{a,b}{c}{Z}=(1-Z)^{-a}\F{a,c-b}{c}{\fr{Z}{Z-1}}
\end{align*}
(cf.~\cite[p.105, 2.9(3)]{erdelyi1953a}), and then using (7.11) with $-Z=(1-\ta)/\ta$ in the resulting expression, we have 
\begin{align*}
&\int_0^1\ta^{\ga-\be+w-1}(1-\ta)^{\be+\de-1}
\cdot\fr{1}{2\pi i}\int_{(\rh)}\vG{\al+\be-\ga+r,\be+r,\be+\de,-r}{\al+\be-\ga,\be,\be+\de+r}
\Bigl(\fr{1-\ta}{\ta}\Bigr)^rdrd\ta\\
&\quad=\vG{\be+\de}{\al+\be-\ga,\be,\ga+\de+w}
\fr{1}{2\pi i}\int_{(\rh)}\vG{\al+\be-\ga+r, \be+r, -r, \ga-\be+w-r}{1}dr.
\end{align*}
Here real $\rh$ is chosen with $\max(-\re(\al+\be-\ga), -\re\be)<\rh
<\min(0,\re(\ga-\be)+u_0)$, which is possible under (7.24) and (7.27), and the interchange of the 
$r$- and $\ta$-integral is justified (upon Fubini's theorem) by absolute convergence. 
Furthermore, the last $r$-integral is evaluated by a particular case of Barnes' first lemma, which 
states that 
\begin{align*}
\fr{1}{2\pi i}\int_{(\si)}\vG{a+s,b+s,c-s,d-s}{1}ds
&=\vG{a+c,a+d,b+c,b+d}{a+b+c+d},
\end{align*}
for any complex $a$, $b$, $c$, $d$ and real $\si$ satisfying 
$\max(-\re a,-\re b)<\si<\min(\re c,\re d)$ (cf.~\cite[p.50, 1.19(8)]{erdelyi1953a}); 
this completes the proof.
\end{proof}

Suppose temporarily that $\si>1$ and $|\arg z|<\pi/2$. We operate $\RL^{\ga,\de}_{z;\ta_2}\RL^{\al,\be}_{\ta_2;\ta_1}$ term-by-term on the right sides of (4.13), upon (4.17), incorporate (7.23) in each resulting term, and then change the order of the $w$-integral and the $l$-sum or $\xi$-integral, to obtain, in view of (1.5), the following lemma.  
\begin{lemma}
The formula 
\begin{align*}
&\RL_{z;\ta_2}^{\ga,\de}\RL_{\ta_2;\ta_1}^{\al,\be}
(\ph^{\ast})^{(m)}(s+\ta_1,a,\la)\tag{7.29}\\
&\quad=
\fr{(-1)^m}{2\pi i}\int_{(u_0)}
\vG{\al+w,\be+w,\al+\be,\ga+\de,-w}{\al,\ga,\al+\be+w,\ga+\de+w}\ph^{\ast}_{-w-m}(s,a,\la)z^wdw
\end{align*}
holds for any $m\in\mathbb{Z}$ with a constant $u_0$ satisfying $\max(-\re\al,-\re\ga)<u_0<0$, where the integral on the right side converges absolutely for all $(s,z)$ with $s\in\mathbb{C}$ 
and $|\arg z|<\pi/2${\rm;} this provides the analytic continuation of the left side to the same 
region of $(s,z)$.
\end{lemma}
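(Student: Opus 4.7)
The plan is to parallel the derivations of the analogous representation lemmas for $\LM$, $\RL$, $\LM\LM$ and $\RL\LM$ already given in this and previous sections. I begin by temporarily restricting $\si>1$ and $|\arg z|<\pi/2$. Under this restriction, (4.14) expresses $\ph_r(s,a,\la)$ and $\ps_r(s,a)$ as an absolutely convergent Dirichlet series and a horizontal integral respectively, so that after combining with Lemma~3 (formula (4.17)) I may apply $\RL_{z;\ta_2}^{\ga,\de}\RL_{\ta_2;\ta_1}^{\al,\be}$ term-by-term to each of these representations. Each Lerch summand then becomes $e(\la l)(a+l)^{-s}\log^m(a+l)$ times the iterated transform of $e^{-\ta_1\cdot z\log(a+l)}$; the $\ps^{(m)}$-term is handled identically with $\log\xi$ in place of $\log(a+l)$. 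The already-proved formula (7.23), with $c=\log(a+l)$ or $c=\log\xi$, now represents each such transformed exponential as a single Mellin--Barnes integral in an auxiliary variable $w$.

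The next step is to move the $w$-integral outside the $l$-sum and the $\xi$-integral. Fubini's theorem applies because, with $u_0\in(\max(-\re\al,-\re\ga),0)$, the integrand in (7.23) decays exponentially as $|v|=|\im w|\to\infty$ uniformly in $l$ and $\xi$, while the $l$-sum and $\xi$-integral converge absolutely on the $w$-line thanks to the factor $(a+l)^{-\si}\log^{u_0}(a+l)$ with $\si>1$. Once the $w$-integral is outermost, the $l$-sum and $\xi$-integral recombine, by the very definitions in (4.14), into $\ph_{-w-m}(s,a,\la)$ and $\de_{\mathbb{Z}}(\la)\ps_{-w-m}(s,a)$, which by (1.5) collapse into $\ph^{\ast}_{-w-m}(s,a,\la)$. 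This yields the claimed expression (7.29) in the restricted $(s,z)$-region.

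The remaining task is to verify absolute convergence of (7.29) for $s\in\mathbb{C}$ and $|\arg z|<\pi/2$, which then provides the analytic continuation in $s$ removing the temporary restriction $\si>1$. On the line $\re w=u_0$, Stirling's formula (5.5) applied to the three numerator gamma factors yields $e^{-3\pi|v|/2}$, while the two reciprocal denominator gamma factors contribute $e^{\pi|v|}$, giving a net decay $e^{-\pi|v|/2}$; tracking the polynomial powers gives
\begin{align*}
\vG{\al+w,\ga+w,\al+\be,\ga+\de,-w}{\al,\ga,\al+\be+w,\ga+\de+w}
\ll e^{-\pi|v|/2}(|v|+1)^{-\re(\be+\de)-u_0-1/2}.
\end{align*}
Combining this with the vertical estimate (4.12), namely $\ph^{\ast}_{-w-m}(s,a,\la)\ll(|v|+|t|+1)^{\max(0,\lf2-\si\rf)}$, and with $|z^w|=|z|^{u_0}e^{-v\arg z}$, the integrand is bounded by
\begin{align*}
\ll e^{-|v|(\pi/2-|\arg z|)}(|v|+1)^{-\re(\be+\de)-u_0-1/2}(|v|+|t|+1)^{\max(0,\lf2-\si\rf)}|z|^{u_0},
\end{align*}
which is integrable precisely when $|\arg z|<\pi/2$. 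The main delicate point is keeping track of the exponential cancellations among the five gamma factors and of the locations of their poles, so that the interchange of summation and integration in the second paragraph is rigorously justified in the strip $\max(-\re\al,-\re\ga)<u_0<0$; once this is in hand, the argument mirrors those for Lemmas~4, 6, 9 and 11.
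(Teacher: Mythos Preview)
Your proposal is correct and follows essentially the same route as the paper: restrict to $\si>1$ and $|\arg z|<\pi/2$, apply the iterated transform term-by-term to (4.14) via (4.17), invoke (7.23) for each term, interchange the $w$-integral with the $l$-sum and $\xi$-integral, and then bound the integrand by Stirling and (4.12) to extend by analytic continuation. Your polynomial exponent $-\re(\be+\de)-u_0-1/2$ is in fact the correct one (the paper's (7.30) has a sign slip there), though this does not affect the convergence argument since the exponential factor $e^{-|v|(\pi/2-|\arg z|)}$ dominates.
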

\begin{proof}
The integrand in (7.29) on the line $\re w=u$ is (apart from its poles) bounded, by (4.12) and (5.5), 
as 
\begin{align*}
\ll e^{-|v|(\pi/2-|\arg z|)}(|v|+1)^{\re(\be+\de)-u-1/2}(|v|+|t|+1)^{\max(0,\lf2-\si\rf)}|z|^u,
\tag{7.30}
\end{align*}  
which gives the required absolute convergence. 
\end{proof}

We are now ready to prove Theorems~9~and~10.
\begin{proof}[Proof of Theorem~9]
The same path moving (to the right) argument as in the proof of Theorem~1 leads us to the expression (3.18) with 
\begin{align*}
R_{m,N}^{5,+}(s,a,\la;z)
&=\fr{(-1)^m}{2\pi i}\int_{(u_N^+)}
\vG{\al+w,\ga+w,\al+\be,\ga+\de,-w}{\al,\ga,\al+\be+w,\ga+\de+w}
\tag{7.31}\\
&\quad\times\ph_{-w-m}^{\ast}(s,a,\la)z^wdw
\end{align*}
for $|\arg z|<\pi/2$ with a constant $u_N^+$ satisfying $\max(-\re\al,-\re\ga,N-1)<u_N^+<N$. 
The remaining inequality (3.19) follows simiraly to the derivation of (2.6), where the integral in (7.31) is bounded by using (4.12) and by applying Lemma~5 with (7.30). 
\end{proof}
\begin{proof}[Proof of Theorem~10]
For any nonnegative integer $N$, let a real constant $u_N^-$ and an integer $N'$ satisfy 
\begin{align*}
-\re\al-N<u_N^-<-\re\ga-N'+1\leq-\re\al-N+1,
\end{align*}
which gives $N'=N-\lf\re(\ga-\al)\rf$. Then 
the same path moving (to the left) argument as in the proof of Theorem~6 leads us to the 
expression (3.20) with 
\begin{align*}
R_{m,N}^{5,-}(s,a,\la;z)
&=\fr{(-1)^m}{2\pi i}\int_{(u_N^-)}
\vG{\al+w,\ga+w,\al+\be,\ga+\de,-w}{\al,\ga,\al+\be+w,\ga+\de+w}
\tag{7.32}\\
&\quad\times\ph_{-w-m}^{\ast}(s,a,\la)z^wdw
\end{align*} 
for $|\arg z|<\pi/2$, where each term of the asymptotic series is rewritten by (7.8). 
The remaining inequality follows similarly to the derivation of (3.6), where the integral 
in (7.32) is bounded by using (5.12) and applying Lemma~5 with (7.30); 
this completes the proof.
\end{proof}

\end{document}